\theoremstyle{plain}
\newtheorem{theorem}{Theorem}
\newtheorem{proposition}[theorem]{Proposition}
\theoremstyle{remark}
\newtheorem{remark}{Remark}
\newcommand{\RE}{\mathbb{R}}
\newcommand{\Hone}{H^1_0(\Omega)}
\newcommand\Hd{H^2_0(\Omega)}
\newcommand\Hdiv{\mathbf{H}(\div;\Omega)}
\newcommand\Hodiv{\mathbf{H}_0(\div;\Omega)}
\newcommand\Pinabla{\Pi_k^\nabla}
\newcommand\Pio{\Pi_k^0}
\newcommand\VhP{V_h^k(P)}
\newcommand\ahP{a_h^P}
\newcommand\m[1]{\mathsf{#1}}
\newcommand\SPa{S^P_a}
\newcommand\SPb{S^P_b}
\newcommand\edges{\mathcal{E}_h}
\newcommand\T{\mathcal{T}_h}
\newcommand\Tn{\mathcal{T}_n}
\newcommand\Vhnc{V_h^{k,nc}}
\newcommand\Hnc{H^{1,nc}(\T;k)}
\newcommand\jump[1]{\left[\!\left[#1\right]\!\right]}
\newcommand\nc{\mathcal{R}_h}
\newcommand\Hu{H^1(\Omega)}
\newcommand\bfsigma{\boldsymbol{\sigma}}
\newcommand\bftau{\boldsymbol{\tau}}
\newcommand\bfeta{\boldsymbol{\eta}}
\newcommand\bfn{\mathbf{n}}
\newcommand\bfu{\mathbf{u}}
\newcommand\bfv{\mathbf{v}}
\newcommand\bff{\mathbf{f}}
\newcommand\bfx{\mathbf{x}}
\renewcommand\div{\operatorname{\mathrm{div}}}
\newcommand\rot{\operatorname{\mathrm{rot}}}
\newcommand\adelta{a_\Delta^P}
\newcommand\adh{a_{\Delta,h}^P}
\newcommand\bdh{b_{\Delta,h}^P}
\newcommand\Pidelta{\Pi^\Delta_2}
\newcommand\symgrad{\operatorname{\underline{\boldsymbol{\varepsilon}}}}
\newcommand\smm{\text{-}}
\begin{document}

\title[]{Virtual element approximation of eigenvalue problems}

\author{Daniele Boffi}
\address{King Abdullah University of Science and Technology (KAUST), Saudi Arabia }
\email{daniele.boffi@kaust.edu.sa}
\urladdr{http://cemse.kaust.edu.sa/people/person/daniele-boffi}
\author{Francesca Gardini}
\address{Dipartimento di Matematica ``F. Casorati'', Universit\`a di Pavia}
\email{francesca.gardini@unipv.it}
\urladdr{http://www-dimat.unipv.it/gardini/}
\author{Lucia Gastaldi}
\address{DICATAM, Universit\`a di Brescia, Italy}
\email{lucia.gastaldi@unibs.it}
\urladdr{http://lucia-gastaldi.unibs.it}
\subjclass{65N30, 65N25}
\keywords{partial differential equations, eigenvalue problem, parameter
dependent matrices, virtual element method, polygonal meshes}

%
\maketitle
%

\begin{abstract}
We discuss the approximation of eigenvalue problems associated with
elliptic partial differential equations using the virtual element method.
After recalling the abstract theory, we present a model problem, describing in
detail the features of the scheme, and highligting the effects of the
stabilizing parameters. We conlcude the discussion with a survey of several
application examples. 
\end{abstract}

\section{Introduction}
\label{se:intro}

In this chapter we discuss the use of the Virtual Element Method (VEM) for the
approximation of eigenvalue problems associated with partial differential
equations.
Eigenvalue problems are present in several applications and are the object of
an appealing and vast research area. It is known that the analysis of
numerical schemes for the approximation of eigenmodes is based on suitable a
priori estimates and appropriate compactness assumptions (see, for
instance~\cite{kato,baos,acta}).

Moreover, a good knowledge of the spectral properties of a discretization
scheme is essential for the stability analysis of transient problems.
It is well known, for instance, that the solution of parabolic and hyperbolic
linear evolution problems (heat/wave equation) can be presented as a Fourier
series in terms of eigenfunctions and eigenvalues of the corresponding
elliptic eigenproblem (Laplace eigenproblem). We refer the interested reader
to~\cite{RT} for more information and to~\cite{evo,borzi} for an example of how
things can go wrong if the eigenvalue problem is not correctly approximated.

The virtual element method has been successfully used for the approximation of
several eigenvalue problems. Starting from the pioneering
works~\cite{Steklov,GV}, where the analysis of the Steklov and the Laplace
eigenproblems was developed,
other applications of VEM to eigenvalue problems include an acoustic
vibration problem~\cite{acoustic}, plates
models~\cite{Kirchhoff,buckling}, linear elasticity~\cite{MRelasticity}, and
a transmission problem~\cite{transmission}.
Moreover, nonconforming, $p$, and $hp$ VEM have been considered
in~\cite{GMV,CGMMV} for the Laplace eigenvalue problem. 

The above mentioned references make use of classical tools for the spectral
analysis, such as the Babu\v ska--Osborn theory~\cite{baos} for compact
operators or the Descloux--Nassif--Rappaz theory~\cite{DNR} for non-compact
operators, typically adopted in connection with a shift procedure.
In the case of the mixed formulation for the Laplace
eigenproblem~\cite{mixed}, the conditions introduced in~\cite{bbg1,bbg2} are
considered.

An important feature of VEM, as compared to standard FEM, is that suitable
stabilizing forms, depending on appropriate parameters, have to be introduced
in order to guarantee consistency and stability of the approximation. Typical
theoretical results state that, for given choices of the stabilization
parameters, the discrete solution converges to the continuous one with optimal
order asymptotically in $h$ or $p$. In the case of eigenvalue problems, the
presence of the stabilizing forms may introduce artificially additional
eigenmodes and we have to make sure that they will not pollute the portion of
the spectrum we are interested in. Some of the above mentioned references
address this issue even if they are not conclusive in this respect.
In~\cite{auto-stab} we presented a systematic study of the eigenvalue
dependence on the parameters. If the discrete eigenvalue can be written in the
form $\m{A}\m{u}=\lambda\m{M}\m{u}$, with $\m{A}$ and $\m{M}$ depending on
$\alpha$ and $\beta$, respectively, then the \emph{quick and easy recipe} is
to pick a sufficiently large $\alpha$ and a small (possibly zero) $\beta$.
Such discussion is summarized in Section~\ref{se:parameter}.

After describing the general setting of a variationally posed eigenvalue
problem in Section~\ref{se:setting}, we recall the basic theory for the VEM
approximation of the Laplace eigenproblem in Section~\ref{se:theory}. The
case of nonconforming, $p$, and $hp$ VEM is treated in
Section~\ref{se:nonconf-hp}. As already mentioned, Section~\ref{se:parameter}
deals with the choice of the stabilizing parameters. Finally,
Section~\ref{se:appl} presents a survey of applications of VEM to the
discretization of eigenproblems of interest.

\section{Abstract setting}
\label{se:setting}

Let $V$ and $H$ be two Hilbert spaces such that $V\subset H$ with dense and
compact embedding, and $a(\cdot,\cdot)$ and $b(\cdot,\cdot)$ be two continuous
and symmetric bilinear forms defined on $V\times V$ and $H\times H$,
respectively.
We consider the following eigenvalue problem in variational form:
find $(\lambda,u)\in\mathbb{R}\times V$ with $u\neq 0$ such that
\begin{equation}
\label{eq:eig}
a(u,v)=\lambda b(u,v)\quad\forall v\in V. 
\end{equation}
The source problem associated with problem~\eqref{eq:eig} reads: given $f\in H$, find $u\in V$ such that 
\begin{equation}
\label{eq:source}
a(u,v)=b(f,v)\quad\forall v\in V.
\end{equation}
We assume that $a(\cdot,\cdot)$ is coercive on $V$, that is there exists $\alpha>0$ such that
$$
a(v,v)\ge\alpha \vert\vert v\vert\vert^2_V\quad\forall v\in V.
$$
Under these assumptions, there exists a unique solution $u\in V$ of problem~\eqref{eq:source} satisfying the following stability estimate
\begin{equation}
\label{eq:stability}
\vert\vert u \vert\vert_V\le C\vert\vert f \vert\vert_H.
\end{equation}
We then consider the solution operator $T: H\to V\subset H$ defined as follows: 
$$
Tf=u,\ u\in V,
$$
with $u$ the unique solution of the source problem~\eqref{eq:source}. Since
problem~\eqref{eq:source} is well posed, $T$ is a well defined, self-adjoint
linear bounded operator. Thanks to the compact embedding of $V$ in $H$, it
turns out that $T$ is also compact.  
The same conclusions apply to the situation when $V$ is not compact in $H$, by
assuming directly the compactness of the operator $T$.
We observe that $(\lambda,u)$ is an eigenpair of problem~\eqref{eq:eig} if and
only if $\big(\frac{1}{\lambda},u\big)$ is an eigenpair of $T$. Thanks to the
spectral theory of compact operators, we have that the
eigenvalues of problem~\eqref{eq:eig} are real and positive and form a
divergent sequence
\begin{equation}
\label{eq:eigvalues}
0 < \lambda_1\le\lambda_2\le\dots\le\lambda_i\le\dots;
\end{equation}
conventionally, we repeat them according to their multiplicities. The
corresponding eigenfunctions $u_i$ are chosen with the following properties 
$$
\vert\vert u_i\vert\vert_H = 1, \quad a(u_i,u_i)=\lambda_i,
$$ 
and so that they form an orthonormal basis both in $V$ and in $H$. 

We introduce a Galerkin type discretization of problems~\eqref{eq:eig}
and~\eqref{eq:source}. 
To this end, let $V_h\subset V$ be a finite dimensional subspace of 
$V$, and $a_h:V_h\times V_h\to\mathbb{R}$ and $b_h:H\times H\to\mathbb{R}$ two
discrete symmetric continuous bilinear forms. Then, the discrete eigenvalue
problem and the discrete source problems are given by: 
find $(\lambda_h,u_h)\in\mathbb{R}\times V_h$ with $u_h\neq 0$ such that
\begin{equation}
\label{eq:eig-discrete}
a_h(u_h,v_h)=\lambda_h b_h(u_h,v_h)\quad\forall v_h\in V_h, 
\end{equation}
and, given $f\in H$, find $u_h\in V_h$ such that
\begin{equation}
\label{eq:source-discrete}
a_h(u_h,v_h)=b_h(f,v_h)\quad\forall v_h\in V_h. 
\end{equation}
In order to guarantee existence, uniqueness, and stability of the solution to the discrete source problem~\eqref{eq:source-discrete}, we assume that the discrete bilinear form $a_h$ is coercive. 

Similarly to the continuous case, we introduce the discrete solution operator 
$T_h: H\to V_h\subset H$ defined by
$$
T_h f=u_h,
$$
with $u_h\in V_h$ the unique solution to the discrete source
problem~\eqref{eq:source-discrete}.  $T_h$ is a self-adjoint operator with
finite range, hence it is compact and has $N_h$ positive discrete eigenvalues
$$
0 < \lambda_{h,1}\le\lambda_{h,2}\le\dots\le\lambda_{h,N_h},
$$ 
where $N_h$ denotes the dimension of $V_h$.  Moreover, the discrete
eigenfunctions $u_{h,i}$ span $V_h$ and can be chosen as an orthogonal basis
satisfying 
$$
\vert\vert u_{h,i}\vert\vert_H = 1, \quad 
a(u_{h,i},u_{h,i})=\lambda_{h,i}\quad\forall i=1,\dots N_h.
$$
We recall some results of the spectral approximation theory for compact
operators~\cite{kato,baos,acta}. In the following we denote by $\delta(E,F)$ 
the gap between the spaces $E$ and $F$, that is 
$$
\delta(E,F)=\max(\hat\delta(E,F),\hat\delta(F,E))\quad \text{with }
\hat\delta(E,F)=\sup_{\substack{u\in E \\ ||u||_H=1}}\inf_{v\in F} ||u-v||_H. 
$$
\begin{theorem}
\label{th:uniform-conv}
Let us assume that 
\begin{equation}
\label{eq:spectral-approx}
\vert\vert T-T_h\vert\vert_{\mathcal{L}(H,H)}\to 0 \text{  as } h\to 0.
\end{equation}
Let $\lambda_i$ be an eigenvalue of problem~\eqref{eq:eig} with multiplicity
equal to $m$, namely $\lambda_i=\lambda_{i+1}=\dots =\lambda_{i+m-1}$. Then,
for $h$ small enough such that $N_h\ge i+m-1$, exactly $m$ discrete eigenvalues
$\lambda_{h,i}=\lambda_{h,i+1}=\dots =\lambda_{h,i+m-1}$ converge to
$\lambda_i$. 

Moreover, let $\mathcal{E}_i$ be the eigenspace of dimension $m$  
associated with $\lambda_i$ and 
$\mathcal{E}_{h,i}=\bigoplus_{j=i}^{i+m-1}\mathrm{span}(u_{h,j})$ be the direct sum of the eigenspaces associated with the discrete eigenvalues. Then 
\begin{equation}
\label{eq:eigfun}
\delta(\mathcal{E}_i,\mathcal{E}_{h,i})\to 0\quad \text{as } h\to 0,
\end{equation}
where $\delta(E,F)$ denotes the gap between the spaces $E$ and $F$.
\end{theorem}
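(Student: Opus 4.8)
The plan is to reduce the statement to the classical spectral convergence theory for collectively compact / norm-convergent families of compact self-adjoint operators, applied to $T$ and $T_h$ regarded as operators on $H$. The key observation is that \eqref{eq:spectral-approx} is exactly the hypothesis $\|T-T_h\|_{\mathcal{L}(H,H)}\to0$, so the spectrum of $T_h$ converges to that of $T$ in the appropriate sense, and it only remains to translate this into statements about eigenvalues of \eqref{eq:eig}–\eqref{eq:eig-discrete} and about the associated eigenspaces via the correspondence $\lambda\leftrightarrow\mu=1/\lambda$ established in the text.

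First I would fix the eigenvalue $\lambda_i$ of \eqref{eq:eig} of multiplicity $m$ and pass to $\mu_i=1/\lambda_i$, which is then an eigenvalue of the compact self-adjoint operator $T$ of multiplicity $m$, isolated in $\sigma(T)$. Choose a circle $\Gamma$ in the complex plane centered at $\mu_i$ with radius small enough that $\Gamma$ and its interior contain no other point of $\sigma(T)$; this is possible because $\sigma(T)\setminus\{0\}$ is discrete. Next I would invoke norm resolvent continuity: since $\|T-T_h\|_{\mathcal L(H,H)}\to0$, for $h$ small enough $\Gamma$ avoids $\sigma(T_h)$ as well, and the spectral (Riesz) projections
$$
E=\frac{1}{2\pi i}\int_\Gamma (z-T)^{-1}\,dz,\qquad
E_h=\frac{1}{2\pi i}\int_\Gamma (z-T_h)^{-1}\,dz
$$
satisfy $\|E-E_h\|_{\mathcal L(H,H)}\to0$. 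A standard argument (e.g.\ using that the rank of a projection is lower semicontinuous and, conversely, that two projections at distance $<1$ have the same rank) shows $\operatorname{rank}E_h=\operatorname{rank}E=m$ for $h$ small; hence, counting multiplicities, exactly $m$ eigenvalues $\mu_{h,j}$ of $T_h$ lie inside $\Gamma$. Translating back through $\lambda=1/\mu$ and using that the remaining discrete eigenvalues stay bounded away from $\mu_i$, these correspond to exactly $m$ discrete eigenvalues of \eqref{eq:eig-discrete} converging to $\lambda_i$; relabelling them as $\lambda_{h,i}=\dots=\lambda_{h,i+m-1}$ (in the convention where equalities mean the cluster, or more precisely that all $m$ of them tend to $\lambda_i$) gives the first assertion, valid as soon as $N_h\ge i+m-1$ so that there are enough discrete eigenvalues available.

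For the eigenspace convergence \eqref{eq:eigfun}, note that $\mathcal E_i=\operatorname{range}E$ and $\mathcal E_{h,i}=\operatorname{range}E_h$ (the latter because the $m$ discrete eigenvalues captured inside $\Gamma$ are precisely $\mu_{h,i},\dots,\mu_{h,i+m-1}$, whose eigenvectors span $\operatorname{range}E_h$). The gap between the ranges of two projections is controlled by the operator norm of their difference: $\hat\delta(\operatorname{range}E,\operatorname{range}E_h)\le\|(E-E_h)|_{\operatorname{range}E}\|\le\|E-E_h\|$ and symmetrically, so $\delta(\mathcal E_i,\mathcal E_{h,i})\le\|E-E_h\|_{\mathcal L(H,H)}\to0$. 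This yields \eqref{eq:eigfun}.

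The main obstacle — really the only nontrivial point — is the justification that $\|E-E_h\|\to0$ and that this forces $\operatorname{rank}E_h=\operatorname{rank}E$; both follow from writing $E-E_h=\frac{1}{2\pi i}\int_\Gamma\big((z-T)^{-1}-(z-T_h)^{-1}\big)dz$, using the resolvent identity $(z-T)^{-1}-(z-T_h)^{-1}=(z-T)^{-1}(T-T_h)(z-T_h)^{-1}$, and bounding the resolvents uniformly on $\Gamma$ for small $h$ (uniform boundedness of $(z-T_h)^{-1}$ on $\Gamma$ is itself a consequence of the norm convergence $T_h\to T$ together with the fact that $\Gamma\subset\rho(T)$). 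One must also check the elementary but essential fact that $T_h$ is self-adjoint with respect to a suitable inner product on $H$ so that its spectral theory applies; here the symmetry of $a_h$ and $b_h$, together with coercivity of $a_h$, is what is used, exactly as recorded in the abstract setting above. All of these are standard, so the proof is essentially an assembly of the cited spectral approximation results \cite{kato,baos,acta} once the correspondence with $T$ and $T_h$ is made explicit.
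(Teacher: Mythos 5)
The paper does not prove Theorem~\ref{th:uniform-conv}; it states it as a recalled result of the classical spectral approximation theory for compact operators, citing the standard references. Your argument via Riesz spectral projections, norm resolvent convergence, and rank stability of nearby projections is precisely the classical proof from those references, and it is correct (the only points glossed over --- the bookkeeping that the $m$ captured discrete eigenvalues carry the indices $i,\dots,i+m-1$, and the self-adjointness of $T_h$ with respect to a suitable discrete inner product --- are standard and you flag them appropriately).
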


\begin{theorem}
\label{th:abstract-rate-conv}
Using the same notation as in Theorem~\ref{th:uniform-conv}, then there exists
a positive constant $C$ independent of $h$ such that
\begin{equation}
\label{eq:delta}
\delta(\mathcal{E}_i,\mathcal{E}_{h,i})\le 
C ||(T-T_h)|_{\mathcal{E}_i}||_{\mathcal{L}(H,H)}.
\end{equation} 
Let $\phi_1,\ldots,\phi_m$ be a basis of the eigenspace $\mathcal{E}_i$
corresponding to the eigenvalue $\lambda_i$.
Then, for $=j=i,\ldots,i+m-1$
\begin{equation}
\label{eq:eig-rate}
|\lambda_j - \lambda_{j,h}| \le C 
\Big(\,\sum_{l,k=1}^m | ((T-T_{h})\phi_k,\phi_l)_H | + 
|| (T-T_{h})|_{\mathcal{E}_i} ||_{\mathcal{L}(H,H)}\Big),
\end{equation}
where $(\cdot,\cdot)_H$ stands for the scalar product in $H$.
\end{theorem}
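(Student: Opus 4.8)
The plan is to reduce both estimates to standard spectral approximation arguments for compact self-adjoint operators, following the Babuška–Osborn framework. The first inequality \eqref{eq:delta} is essentially a quantitative refinement of \eqref{eq:eigfun} from Theorem~\ref{th:uniform-conv}: once $\|T-T_h\|_{\mathcal{L}(H,H)}\to 0$, the spectral projection $E_{h,i}$ onto $\mathcal{E}_{h,i}$ can be written via the Riesz–Dunford integral $E_{h,i}=\frac{1}{2\pi i}\oint_\Gamma (z-T_h)^{-1}\,dz$, where $\Gamma$ is a fixed circle in the complex plane separating $1/\lambda_i$ from the rest of $\sigma(T)$. For $h$ small, $\Gamma$ also separates the corresponding cluster of discrete eigenvalues, and the resolvent identity $(z-T_h)^{-1}-(z-T)^{-1}=(z-T_h)^{-1}(T_h-T)(z-T)^{-1}$ gives $\|E_i-E_{h,i}\|_{\mathcal{L}(H,H)}\le C\,\|(T-T_h)|_{\mathcal{E}_i}\|_{\mathcal{L}(H,H)}$, since on the range of $E_i$ one only ever sees the restriction of $T-T_h$ to $\mathcal{E}_i$ (the resolvent $(z-T)^{-1}$ preserves $\mathcal{E}_i$). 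A standard bound relating the gap $\delta(\mathcal{E}_i,\mathcal{E}_{h,i})$ to $\|E_i-E_{h,i}\|$ then yields \eqref{eq:delta}.

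For the eigenvalue estimate \eqref{eq:eig-rate}, I would first pass to the solution operators: since $(\lambda_j,u_j)$ is an eigenpair of \eqref{eq:eig} iff $(1/\lambda_j,u_j)$ is an eigenpair of $T$, and likewise $\mu_{h,j}:=1/\lambda_{h,j}$ is an eigenvalue of $T_h$, it suffices to estimate $|\mu_j-\mu_{h,j}|$ and then use $|\lambda_j-\lambda_{h,j}|=|\mu_j-\mu_{h,j}|\,\lambda_j\lambda_{h,j}$, the latter factor being bounded for $h$ small. The key identity is the Osborn formula: for the eigenvalue $\mu_i$ of the self-adjoint operator $T$ with eigenspace $\mathcal{E}_i$, and $\{\phi_k\}$ an $H$-orthonormal basis of $\mathcal{E}_i$, one has for each $j$ in the cluster
\begin{equation*}
\mu_j-\mu_{h,j}=\frac{1}{m}\sum_{k=1}^m \big((T-T_h)\phi_k,\phi_k\big)_H + R_h,
\end{equation*}
with a remainder $R_h$ satisfying $|R_h|\le C\,\|(T-T_h)|_{\mathcal{E}_i}\|_{\mathcal{L}(H,H)}^2$; more precisely, expanding the bilinear form $((T-T_h)\,\cdot\,,\,\cdot\,)_H$ on the basis $\{\phi_k\}$ and bounding the off-diagonal and higher-order contributions gives exactly a sum of the form $\sum_{l,k}|((T-T_h)\phi_k,\phi_l)_H|$ plus $\|(T-T_h)|_{\mathcal{E}_i}\|_{\mathcal{L}(H,H)}$ (the square being absorbed since the operator norm tends to zero). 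Combining this with the already-established \eqref{eq:delta} to control the eigenfunction part of the remainder yields \eqref{eq:eig-rate}.

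The main obstacle — or rather the point requiring the most care — is the derivation of the Osborn-type identity and the verification that all remainder terms are genuinely dominated by the two quantities appearing on the right-hand side of \eqref{eq:eig-rate}. This requires writing $\mu_{h,j}=(T_h \widetilde u_{h,j},\widetilde u_{h,j})_H/(\widetilde u_{h,j},\widetilde u_{h,j})_H$ for a suitable discrete eigenfunction, replacing $\widetilde u_{h,j}$ by its projection onto $\mathcal{E}_i$ up to an error controlled by $\delta(\mathcal{E}_i,\mathcal{E}_{h,i})$, and then carefully tracking how $a_h$ versus $a$ (equivalently $T_h$ versus $T$) enter; because the bilinear forms $a_h,b_h$ differ from $a,b$ in the VEM setting, one must be sure that the abstract argument only uses self-adjointness of $T$ and $T_h$ on $H$ and the boundedness of the discrete eigenvalues, both of which hold by the assumptions in Section~\ref{se:setting}. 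Everything else — the resolvent estimates, the gap-versus-projection bound, the reciprocal relation between eigenvalues of the original problem and of the solution operator — is routine and can be cited from \cite{kato,baos}.
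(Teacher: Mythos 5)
Your proposal is correct and follows exactly the classical Babu\v ska--Osborn/Kato argument (Riesz--Dunford spectral projections combined with the resolvent identity for \eqref{eq:delta}, and the Osborn trace identity for \eqref{eq:eig-rate}), which is precisely the approach the paper relies on: Theorem~\ref{th:abstract-rate-conv} is presented there as a recalled result with a citation to~\cite{kato,baos,acta} and no proof is given in the text. Your remark that the sharp Osborn remainder $\|(T-T_h)|_{\mathcal{E}_i}\|_{\mathcal{L}(H,H)}^2$ is absorbed into the first-power term appearing on the right-hand side of \eqref{eq:eig-rate} correctly reconciles your derivation with the slightly weaker form stated in the theorem.
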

\begin{remark}
\label{rm:TV}
The solution operator could also be defined as $T_V: V\to V$ such that 
for all $f\in V$, $T_V f$ is the solution to problem~\eqref{eq:source}. 
If $T_V$ is compact, then one can obtain results similar to those presented above with due modifications.
\end{remark}

\subsection{Model Problem}
Let $\Omega\subset\mathbb{R}^d$ ($d=2,3$) be an open bounded Lipschitz domain.
As a model problem, we consider the Laplace eigenvalue equation with
homogeneous Dirichlet boundary conditions: 
\begin{equation}
\label{eq:Laplace}
\aligned
& -\Delta u=\lambda u && \text{in }\Omega\\
& u=0 && \text{on }\partial\Omega.
\endaligned
\end{equation}
Setting
\begin{equation}
\label{eq:notation}
\aligned
& V=\Hone\quad H=L^2(\Omega)\\ 
& a(u,v)=\int_\Omega \nabla u\cdot\nabla v\,dx\quad b(u,v)=\int_\Omega uv\,dx
\endaligned
\end{equation}
the weak formulation of~\eqref{eq:Laplace} fits the above abstract setting.
Later on we will use the following additional regularity
result~\cite{agmon,grisvard}: if $f\in L^2(\Omega)$ there exists $r\in
\big(\frac{1}{2},1\big]$ such that $u\in H^{1+r}(\Omega)$, with
\begin{equation}
\label{eq:regularity}
\vert\vert u \vert\vert_{1+r}\le C\vert\vert f \vert\vert_0.
\end{equation}
The regularity index $r$ depends on the maximum interior wedge angle of 
$\Omega$. In particular, if the domain is convex $r$ can be taken equal to
$1$.

Here and in what follows, we adopt the usual notation
$\vert\vert\cdot\vert\vert_{s,D}$ and $\vert\cdot\vert_{s,D}$ for norm and
seminorm in the Sobolev space $H^s(D)$.  When $D=\Omega$ we omit the subindex
$\Omega$.  Moreover, we set $a^D(u,v)=\int_D\nabla u\cdot \nabla v\,dx$ and
$b^D(u,v)=\int_D u v\,dx$.  
\section{Virtual Element approximation of the Laplace eigenvalue problem}
\label{se:theory}
In this section, we consider the Laplace eigenproblem and, after recalling 
the definition and the approximation properties of VEM, we present the
discretization of the eigenvalue problem~\eqref{eq:eig} and the relative
convergence analysis. We treat in detail only the two dimensional case; for
the three dimensional one, which can be analyzed with the same arguments, we
refer to~\cite{GV,GMV}.
\subsection{Virtual Element Method}
\label{se:VEM}
We denote by $\mathcal{T}_h$ a family of polygonal decomposition of 
$\Omega$, by  $h_P$ the diameter of the element $P\in\mathcal{T}_h$, by
$h$ the maximum of such diameters, and by $\edges$ the set of the edges of the
mesh. Moreover, $\edges^0$ and $\edges^\partial$ stand for the subset of
internal and boundary edges, respectively.

We suppose that for all $h$, there exists a constant $\rho$ such that each
element $P\in\mathcal{T}_h$ is star-shaped with respect to a disk  with radius
greater than $\rho h_P$, and for every edge $e\in\partial P$ it holds that
$h_e\ge\rho h_P$ (see~\cite{BBCMMR2013}).
We observe that the scaling assumption implies that the number of edges in the
boundary of each element is uniformly bounded over the whole mesh family
$\mathcal{T}_h$. However, there is no restriction on the interior angles of the
polygons which can be convex, flat, or concave.

Let $k\ge1$ be a given integer, we denote by $\mathbb{P}_k(P)$ the space of
polynomials of degree at most equal to $k$. We introduce the {\itshape
enhanced} virtual element space $V_h$ of order $k$ (see~\cite{AABMR}). We
consider the local space
\begin{equation}
\label{eq:vemP}
V_h^k(P)=\left\{v\in\widetilde{V}_h^k(P):\int_P (v-\Pinabla v)p\,dx=0
\quad\forall p\in(\mathbb{P}_k\setminus\mathbb{P}_{k-2})(P) \right\},
\end{equation}
with the following definitions:
\begin{equation}
\label{eq:pinabla}
\aligned
& \widetilde{V}_h^k(P)=\left\{v\in H^1(P):v|_{\partial P}\in C^0(\partial P), 
v|_e\in\mathbb{P}_k(e)\ \forall e\in\partial P,
\Delta v\in\mathbb{P}_k(P)\right \} \\
& \Pinabla: \widetilde{V}_h^k(P)\to\mathbb{P}_k(P) \text{ is a projection operator defined by } \\
& \qquad\qquad a^P(\Pinabla v-v,p)=0\quad\forall p\in\mathbb{P}_k(P)\\
& \qquad\qquad \int_{\partial P} (\Pinabla v-v)\,ds=0. 
\endaligned
\end{equation}
In~\eqref{eq:vemP}, $(\mathbb{P}_k\setminus\mathbb{P}_{k-2})(P)$ stands for the
space of polynomials in $\mathbb{P}_k(P)$ $L^2$-othogonal to
$\mathbb{P}_{k-2}(P)$.

As shown in~\cite{AABMR}, it is possible to compute $\Pinabla v_h$ for all 
$v_h\in \VhP$ using the following degrees of freedom:
the values $v(V_i)$ at the vertices $V_i$ of $P$, the scaled moments up to order $k-2$ on each edge $e\subset\partial P$, and on the element $P$.

The global virtual element space is obtained by gluing together the local spaces, that is 
$$
V_h^k=\left\{ v\in\Hone:v|_P\in\VhP\ \forall P\in\mathcal{T}_h \right\}.
$$
From now on, we denote by $N_h$ the dimension of $V_h^k$. 

We highlight that, differently form finite elements, we do not have at hand the explicit 
knowledge of the basis functions of $V_h^k$; in this sense the basis functions are  {\itshape virtual}. On the other hand, polynomials of degree at most $k$ are in the VEM space; this will guarantee the optimal order of accuracy. Actually, the following approximation results hold true, 
see~\cite{brennerscott,BBCMMR2013,CGPS}.
\begin{proposition}
\label{prop:approx}
There exists a positive constant $C$, depending only on the polynomial degree
$k$ and the shape regurality $\rho$, such that for every $s$ with $1\le s \le
k+1$ and for every $v\in H^s(\Omega)$ there exists $v_{\pi}\in\mathbb{P}_k(P)$
such that
\begin{equation}
\|v-v_{\pi}\|_{0,P}+h_P |v-v_{\pi}|_{1,P}\le Ch_P^s|v|_{s,P}.
\end{equation}
Moreover, there exists a constant $C$, depending only on the polynomial degree
$k$ and the shape regurality $\rho$, such that for every $s$ with $1\le s \le
k+1$, for every $h$,  and for every $v\in H^s(\Omega)$ there exists $v_I\in
V_h^k$ such that
\begin{equation}
\|v-v_I\|_{0}+h_P |v-v_I|_{1}\le Ch_P^s|v|_{s}.
\end{equation}
\end{proposition}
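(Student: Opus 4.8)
The plan is to prove the two estimates in turn: the first is the classical polynomial approximation bound on a star-shaped element, and the second follows from it once a local stability bound for a suitable interpolation operator into $V_h^k$ is available.

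For the polynomial estimate I would use the theory of averaged Taylor polynomials (Dupont--Scott), as in~\cite{brennerscott}. Since each $P\in\mathcal{T}_h$ is star-shaped with respect to a disk of radius at least $\rho h_P$, its chunkiness parameter is bounded by a constant depending only on $\rho$; one then takes $v_\pi\in\mathbb{P}_k(P)$ to be the averaged Taylor polynomial of $v$ of degree $k$ over that disk and applies the Bramble--Hilbert lemma to obtain $|v-v_\pi|_{m,P}\le Ch_P^{s-m}|v|_{s,P}$ for $0\le m\le s\le k+1$, with $C=C(k,\rho)$. Specializing to $m=0,1$ and adding gives the first displayed inequality. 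The only point requiring attention is tracking the dependence of the constant on the shape regularity, which is routine once the chunkiness parameter is controlled.

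For the interpolation estimate I would define $v_I|_P\in\VhP$ as the unique element of the local space sharing with $v$ the degrees of freedom recalled after~\eqref{eq:pinabla}, namely the vertex values, the scaled moments up to order $k-2$ on each edge, and those on $P$; for $v\in H^s(P)$ with $s>1$ these are well defined (the vertex values by Sobolev embedding on the one-dimensional edges, and for $s$ close to $1$ one may replace the nodal interpolant by a Cl\'ement-type quasi-interpolation). The global interpolant obtained by gluing belongs to $V_h^k$ because the degrees of freedom are matched across interelement edges. The crucial ingredient is a local stability estimate: writing $w=v-v_\pi$ with $v_\pi$ the polynomial from the first part, and using that $\mathbb{P}_k(P)\subset\VhP$ is reproduced by the interpolation, one has $v-v_I=w-w_I$ on each $P$, so it suffices to bound $\|w_I\|_{0,P}+h_P|w_I|_{1,P}$ by $\|w\|_{0,P}+h_P|w|_{1,P}$ up to a constant $C(k,\rho)$. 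This is obtained through the norm equivalences on $\VhP$ relating $\|v\|_{0,P}+h_P|v|_{1,P}$ to the Euclidean norm of the suitably scaled degree-of-freedom vector (the ``dofi--dofi'' bounds), combined with trace and inverse inequalities on the edges to control the individual degrees of freedom of $w$ by $\|w\|_{0,P}+h_P|w|_{1,P}$; here the uniform bound on the number of edges of each $P$ enters. Combining the stability estimate with the first part and summing over $P\in\mathcal{T}_h$ yields the second inequality (to be understood element-wise before summation).

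The main obstacle is precisely the local stability of the interpolation operator. Unlike in the finite element case, the functions in $\VhP$ are not known explicitly, so the bound cannot be obtained by a direct computation on a fixed reference element; instead one must exploit the structure of the space (polynomial boundary data of degree $k$ and $\Delta v\in\mathbb{P}_k(P)$) together with the norm equivalences established via a scaling argument and the star-shapedness assumption. Once this ingredient is granted, the remaining steps are standard.
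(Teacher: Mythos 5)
The paper does not actually prove this proposition: it is quoted from the literature (\cite{brennerscott,BBCMMR2013,CGPS}), so there is no in-paper argument to compare against. Your sketch follows the standard route of those references --- averaged Taylor polynomials and Bramble--Hilbert on a star-shaped element for the polynomial bound, then a degree-of-freedom interpolant with a local stability estimate for the second bound --- and the overall strategy is sound.

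One step is stated incorrectly, though it is repairable. You claim that the degrees of freedom of $w=v-v_\pi$ are controlled by $\|w\|_{0,P}+h_P|w|_{1,P}$ via trace and inverse inequalities. For the vertex values this is false: point evaluation is not bounded on $H^1$ in two dimensions, so the stability bound $\|w_I\|_{0,P}+h_P|w_I|_{1,P}\le C\bigl(\|w\|_{0,P}+h_P|w|_{1,P}\bigr)$ cannot hold for the nodal interpolant. The standard fix is the one you only mention in passing: either enlarge the right-hand side of the stability estimate to $\|w\|_{0,P}+h_P|w|_{1,P}+h_P^{s}|w|_{s,P}$ with $s>1$, which controls the vertex values by Sobolev embedding and, since $|w|_{s,P}\le C|v|_{s,P}$, still yields the claimed $Ch_P^{s}|v|_{s,P}$; or avoid nodal values entirely by first passing through a Cl\'ement-type quasi-interpolant on an auxiliary sub-triangulation of $P$ (the route taken in \cite{Steklov,GV,CGPS}), which is also what is needed to cover the endpoint $s=1$ admitted by the statement. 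With that correction the argument is complete, modulo the ``dofi--dofi'' norm equivalence on $\VhP$ with constants depending only on $k$ and $\rho$, which you rightly single out as the genuinely nontrivial ingredient; it is available in the literature under the stated mesh assumptions and cannot be obtained by a reference-element computation, exactly as you observe.
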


Notice that $v_\pi$ is defined element by element, and does not belong to the
space $H^1(\Omega)$. We shall denote its broken $H^1$-seminorm by $\vert
v_\pi\vert_{1,h}$.

\subsection{The VEM discretization of the Laplace eigenproblem}
\label{se:discrete-Laplace}
In the VEM context, the discrete bilinear forms $a_h(\cdot,\cdot)$ and
$b_h(\cdot,\cdot)$ are written as sum over the elements $P\in\mathcal{T}_h$ of
local contributions $a_h^P(\cdot,\cdot)$ and $b_h^P(\cdot,\cdot)$ for all
$P\in\mathcal{T}_h$, that is 
$$
a_h(\cdot,\cdot)=\sum_{P\in\mathcal{T}_h}a_h^P(\cdot,\cdot),\quad
b_h(\cdot,\cdot)=\sum_{P\in\mathcal{T}_h}b_h^P(\cdot,\cdot).
$$
For the readers' convenience, we recall here the discrete source
problem~\eqref{eq:source-discrete}:
given $f\in L^2(\Omega)$, find $u_h\in V_h^k$ such that
\begin{equation}
\label{eq:source-discrete1}
a_h(u_h,v_h)=b_h(f,v_h)\quad\forall v_h\in V_h^k. 
\end{equation}
It is well known that solving this problem is equivalent to solve a linear system of equations 
$\m{A}\m{u}=\m{f}$, where $\m{A}$ is the $N_h\times N_h$ matrix
corresponding to the form $a_h$ and $\m{f}$ is the vector with components
$b_h(f,\varphi_h)$ with 
$\varphi_h$ the basis functions of $V_h^k$.

We now address the issue of the construction of the discrete bilinear forms along the lines of~\cite{BBCMMR2013}. 
First of all, they are required to be similar to the continuous ones, namely 
$$
a_h^P(v_h,v_h)\approx a^P(v_h,v_h),\quad 
b_h^P(v_h,v_h)\approx b^P(v_h,v_h).
$$
We underline that the continuous forms $a(\cdot,\cdot)$ and $b(\cdot,\cdot)$
in general are not computable on the basis functions, except for those which
are polynomials. Hence we need to project the elements of $V_h^k(P)$ onto 
$\mathbb{P}_k(P)$.
This could be done locally as follows:
\begin{equation}
\label{eq:ah1}
a_h^P(u_h,v_h)=a^P(\Pinabla u_h,\Pinabla v_h)\quad
\forall u_h,v_h\in\mathcal{T}_h,
\end{equation}
but it might happen that a non vanishing element $u_h\in V_h^k(P)$ is such that
$\Pinabla u_h=0$, and so the form $a_h^P(u_h,v_h)=0$ for all $v_h\in \VhP$. In
this case the local contribution to the matrix $\m{A}_h$ results to be singular
and the global matrix might lack control on some components of $V_h^k$. To
avoid this eventuality,  we add a stability term as follows:
\begin{equation}
\label{eq:ah}
a_h^P(u_h,v_h)=a^P(\Pinabla u_h,\Pinabla v_h) + 
\SPa((I-\Pinabla)u_h,(I-\Pinabla)v_h),
\end{equation}
where $\SPa(\cdot,\cdot)$ is a symmetric positive definite bilinear form
defined on $\VhP\times \VhP$ scaling like $a^P(\cdot,\cdot)$.

For proper choices of $\SPa$, it turns out that $a_h^P(\cdot,\cdot)$ 
fulfils the following local {\itshape stability} and {\itshape consistency}
properties for all $P\in\mathcal{T}_h$.
\begin{description}
\item[Stability:]{there exists two positive constants $a_*$ and  
$a^*$ such that for all $v_h\in\VhP$
\begin{equation}
\label{eq:staba}
a_*a^P(v_h,v_h)\le a_h^P(v_h,v_h)\le a^*a^P(v_h,v_h).
\end{equation}
}
\item[Consistency:]{ for all $v_h\in\VhP$ and for all $p_k\in\mathbb{P}_k(P)$ it holds
\begin{equation}
\label{eq:consa}
a_h^P(v_h,p_k)=a^P(v_h,p_k).
\end{equation}
}
\end{description}
In order to deal with the right hand side in~\eqref{eq:source-discrete1}, we define another projection operator $\Pio$ from $L^2(P)$ onto $\mathbb{P}_k(P)$ as 
$$
b^P(\Pio f,p_k)=b^P(f,p_k)\quad\forall p_k\in\mathbb{P}_k(P),\ \forall P\in
\mathcal{T}_h.
$$
This projection operator is computable starting from the degrees of freedom as well. 
Thus the local contribution to the right hand side is given by
\begin{equation}
\label{eq:rhs}
b_h^P(f,v_h)=b^P(\Pio f,v_h)\quad\forall v_h\in\VhP,\ \forall P\in\mathcal{T}_h.
\end{equation}
With the above definitions, the discrete source problem is well-posed and its solution converges to the continuous one with optimal order, (see~\cite{BBCMMR2013,AABMR}): there exists a positive constant C, independent of $h$ such that 
\begin{equation}
\label{eq:error}
\vert\vert u-u_h \vert\vert_1\le C\left ( \vert u-u_I\vert_1 + \vert u - u_{\pi}\vert_{1,h} 
+ \sup_{v_h\in V_h^k}\frac{\vert b(f,v_h)-b_h(f,v_h)
\vert}{\vert v_h\vert_1}\right ),
\end{equation}
where $u_I\in V_h^k$ and $u_{\pi}$ are defined in Proposition~\ref{prop:approx}. We observe that the last term in~\eqref{eq:error} is a consistency term in the spirit of the 
\emph{first Strang Lemma}~\cite[Th. 4.1.1]{ciarlet}.  Indeed, the VEM discretization provides a conforming discrete subspace of $V$, but the 
bilinear forms $a_h$ and $b_h$ differ from the continuous ones. The consistency error generated by the difference between $a$ and $a_h$ is 
already incorporated in the first two terms on the right hand side of~\eqref{eq:error}, while the one related to $b$ depends on the properties 
of the source term $f$. In particular, if 
$f\in L^2(\Omega)$ we obtain that the error converges to zero with optimal order depending on the regularity of the solution $u$:
$$
\vert\vert u-u_h \vert\vert_1\le C \left( h^r \vert u \vert_{1+r} + h\vert\vert f\vert\vert_0 \right).
$$

We now turn to the virtual element discretization of the eigenvalue
problem~\eqref{eq:eig}. Let us recall the discrete formulation: find
$(\lambda_h,u_h)\in\mathbb{R}\times V_h^k$ with $u_h\neq 0$ such that
\begin{equation}
\label{eq:eig-discrete1}
a_h(u_h,v_h)=\lambda_h b_h(u_h,v_h)\quad\forall v_h\in V_h^k, 
\end{equation}
where, using definition~\eqref{eq:rhs} 
\begin{equation}
\label{eq:bh0}
b_h^P(u_h,v_h)=b^P(\Pio u_h,v_h)=b^P(\Pio u_h,\Pio v_h).
\end{equation}
The following generalized algebraic eigenvalue problem corresponds to the
discrete eigenvalue problem:
\begin{equation}
\label{eq:algebraic}
\m{A}\m{u}=\lambda_h \m{M}\m{u},
\end{equation}
where $\m{A}$ and $\m{M}$ are the $N_h\times N_h$ matrices associated with the
bilinear forms $a_h$ and $b_h$, respectively. 

As observed for the form $\ahP$, it might happen that $\Pio u_h=0$ for a non
vanishing $u_h$ and thus  $b_h^P(u_h,v_h)=0$ for all $v_h\in V_h^k$. This means
that the local contribution to the matrix $\m{M}$ has a non trivial kernel
similarly to what we have observed for the form $\ahP$ and some components of
$V_h^k$ may not be controlled by the global mass matrix. It is then natural to
add a stabilization term as follows:
\begin{equation}
\label{eq:bh1}
b_h^P(u_h,v_h)=b^P(\Pio u_h,\Pio v_h) + \SPb((I-\Pio)u_h,(I-\Pio)v_h),
\end{equation}
where $\SPb$ is a symmetric positive definite bilinear form defined on
$\VhP\times \VhP$ scaling as $b^P(\cdot,\cdot)$, that is there exist two
positive constants $\beta_*$ and $\beta^*$ such that 
\begin{equation}
\label{eq:stabb}
\beta_*b^P(v_h,v_h)\le \SPb(v_h,v_h)\le\beta^*b^P(v_h,v_h)\quad\forall 
v_h\in \VhP, \ \forall P\in\mathcal{T}_h.
\end{equation}

\begin{remark}

It is out of the aims of this paper to discuss the choice of algorithms for
the solution of the algebraic eigenvalue problem
$\m{A}\m{u}=\lambda\m{M}\m{u}$. Nevertheless, the presence of the stabilizing
parameters may influence such choice. For instance, if the matrix $\m{M}$ is
singular and $\m{A}$ is not, then it might be convenient to solve the
reciprocal system $\m{M}\m{u}=\lambda^{-1}\m{A}\m{u}$. The definition itself
of solution for problems like this can be difficult to give, in particular,
when both $\m{A}$ and $\m{M}$ are singular and their kernels have a non
trivial intersection. On the other hand if $\m{u}$ belongs to the kernel of
$\m{M}$ and not to that of $\m{A}$, we may conventionally say that it is an
eigenfunction corresponding to the eigenvalue $\lambda=\infty$.

\end{remark}

From now on, unless explicitly stated, we shall consider the discrete eigenvalue problem~\eqref{eq:eig-discrete1} with the local form $b_h^P$ defined as in 
\eqref{eq:bh1}. 
\subsection{Convergence analysis}
\label{se:analysis}

In this section we show the convergence of the discrete eigenpairs to the
continuous ones by applying Theorem~\ref{th:uniform-conv} and prove 
a priori error estimates. 

In order to apply Theorem~\ref{th:uniform-conv} we need to prove $L^2$-error
estimates for the source problems~\eqref{eq:source}
and~\eqref{eq:source-discrete}. For the sake of completeness, we report here
the proof. 

\begin{theorem}
\label{th:L2-error}
Given $f\in L^2(\Omega)$, let $u\in\Hone$ and $u_h\in V_h^k$ denote the solutions to~\eqref{eq:source} and to~\eqref{eq:source-discrete}, respectively. Then there exists a constant $C$ independent of $h$ such that 
\begin{equation}
\label{eq:L2-error}
||u-u_h||_0\le C h^t \Big(|u-u_I|_1 + |u-u_\pi|_{1,h} + ||f-\Pio f||_0  \Big ),
\end{equation}
where $t=\min(r,1)$, being $r$ the regularity index of the solution $u$,
see~\eqref{eq:regularity} and $u_I$ and $u_\pi$ are defined in
Proposition~\ref{prop:approx}.
\end{theorem}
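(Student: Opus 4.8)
The plan is to run the classical Aubin--Nitsche duality argument, adapted to the VEM setting where the bilinear forms are not the continuous ones. First I would introduce the dual problem: let $\psi\in\Hone$ solve $a(\psi,v)=b(e,v)=(e,v)_0$ for all $v\in V$, where $e=u-u_h$ is the error; by the coercivity of $a$ and the elliptic regularity result~\eqref{eq:regularity}, $\psi\in H^{1+r}(\Omega)$ with $\|\psi\|_{1+r}\le C\|e\|_0$. Then I would write $\|e\|_0^2=(e,e)_0=a(\psi,e)=a(\psi,u)-a(\psi,u_h)$ and manipulate this by inserting the interpolant $\psi_I\in V_h^k$ of $\psi$ from Proposition~\ref{prop:approx}, together with the element-wise polynomial projection $\psi_\pi$, so as to replace the continuous form $a$ acting on $u_h$ by the discrete form $a_h$ wherever convenient and bound the mismatch.

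The key steps, in order, would be: (i) decompose $\|e\|_0^2 = a(\psi-\psi_I,e) + a(\psi_I,u) - a(\psi_I,u_h)$; (ii) the first term is controlled by $|\psi-\psi_I|_1\,|e|_1\le C h^r\|\psi\|_{1+r}|e|_1\le C h^r\|e\|_0\,|u-u_h|_1$, using the $H^1$ approximation estimate and the already-established $H^1$ bound~\eqref{eq:error} for $|u-u_h|_1$ — note $|e|_1$ is itself $O(h^t)$ times the data, which is where the extra power of $h$ comes from; (iii) for the remaining terms, use $a(\psi_I,u)=b(f,\psi_I)$ (from~\eqref{eq:source}) and $a_h(u_h,\psi_I)=b_h(f,\psi_I)$ (from~\eqref{eq:source-discrete}), so that $a(\psi_I,u)-a(\psi_I,u_h)=\big(b(f,\psi_I)-b_h(f,\psi_I)\big)+\big(a_h(u_h,\psi_I)-a(u_h,\psi_I)\big)$; (iv) bound the consistency term $b(f,\psi_I)-b_h(f,\psi_I)$ by $\|f-\Pio f\|_0\,\|\psi_I\|_0$ using~\eqref{eq:rhs} and orthogonality of $\Pio$, and bound the form-consistency term $a_h(u_h,\psi_I)-a(u_h,\psi_I)$ by inserting $\psi_\pi$ and using the consistency property~\eqref{eq:consa} together with the stability bounds~\eqref{eq:staba}, reducing it to terms like $|u_h-u_\pi|_{1,h}\,|\psi-\psi_\pi|_{1,h}$ plus $|u-u_\pi|_{1,h}$-type contributions; (v) collect everything, absorb $\|\psi\|_{1+r}\le C\|e\|_0$, divide through by $\|e\|_0$, and substitute the $H^1$ estimate~\eqref{eq:error} for the residual factors to arrive at~\eqref{eq:L2-error} with $t=\min(r,1)$.

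The main obstacle I expect is step (iv), specifically handling the term $a_h(u_h,\psi_I)-a(u_h,\psi_I)$ cleanly: because $a_h$ is only defined on $V_h^k\times V_h^k$ and does not coincide with $a$, one must carefully route the estimate through the polynomial projections on each element, writing $a_h^P(u_h,\psi_I)-a^P(u_h,\psi_I)$, subtracting and adding $a^P(u_\pi,\psi_\pi)$ and $a_h^P(\Pi_k^\nabla$-based polynomial pieces$)$, and invoking~\eqref{eq:consa} to kill the polynomial-polynomial part. The bookkeeping of which approximation term ($|u-u_I|_1$, $|u-u_\pi|_{1,h}$, or $\|f-\Pio f\|_0$) each piece ultimately falls back on, and ensuring every factor carries either an explicit $h^r$ (from the dual) or the residual $|e|_1$ (which is $O(h^t)$), is the delicate part; the rest is routine. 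I would also take care that the $b_h$ stabilization term in~\eqref{eq:bh1} does not enter here, since the source problem~\eqref{eq:source-discrete} in this theorem uses the original $b_h$ from~\eqref{eq:rhs} without the $\SPb$ term, as stated.
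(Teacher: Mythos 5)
Your overall strategy is exactly the paper's: the same Aubin--Nitsche duality argument with the dual solution $\psi$, the same splitting $\|u-u_h\|_0^2=a(u-u_h,\psi-\psi_I)+a(u-u_h,\psi_I)$, and the same further decomposition of the second piece into the data-consistency term $b(f,\psi_I)-b_h(f,\psi_I)$ and the form-consistency term $a_h(u_h,\psi_I)-a(u_h,\psi_I)$, each handled essentially as in the paper's proof of Theorem~\ref{th:L2-error}.

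There is, however, one concrete slip in your step (iv) that, as written, loses a power of $h$. Bounding $b(f,\psi_I)-b_h(f,\psi_I)$ by $\|f-\Pio f\|_0\,\|\psi_I\|_0$ uses the orthogonality of $\Pio$ only once and produces a term of size $C\|f-\Pio f\|_0\,\|u-u_h\|_0$ with no factor $h^t$; after dividing by $\|u-u_h\|_0$ you would be left with $\|f-\Pio f\|_0$ \emph{without} the $h^t$ prefactor that~\eqref{eq:L2-error} requires. The fix, which is what the paper does, is to apply the orthogonality a second time on the other argument, $b^P(f,\psi_I)-b^P(\Pio f,\Pio\psi_I)=b^P(f-\Pio f,\psi_I-\Pio\psi_I)$, and then use $\|\psi_I-\Pio\psi_I\|_0\le Ch\|\psi\|_1\le Ch^t\|u-u_h\|_0$ to supply the missing power. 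A more minor discrepancy: the paper takes $b_h$ in its stabilized form~\eqref{eq:bh1} (per the convention stated at the end of Section~\ref{se:discrete-Laplace}), so its term $I\!I\!I$ also carries $\SPb((I-\Pio)f,(I-\Pio)\psi_I)$, bounded by $\beta^*\|(I-\Pio)f\|_0\|(I-\Pio)\psi_I\|_0\le Ch^t\|(I-\Pio)f\|_0\|u-u_h\|_0$ by the same device; your decision to drop it is consistent with reading~\eqref{eq:source-discrete1} with~\eqref{eq:rhs}, but with the stabilized $b_h$ this extra (harmless) term must be included. Everything else --- the treatment of $I\!V$ via~\eqref{eq:consa} and~\eqref{eq:staba} with the insertion of $u_\pi,\psi_\pi$, and the final substitution of the $H^1$ bound~\eqref{eq:error} for $|u-u_h|_1$ --- matches the paper.
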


\begin{proof}
We use a duality argument and denote by $\psi\in\Hone$ the solution to 
\begin{equation}
\label{eq:duality}
a(\psi,v)=b(u-u_h,v)\quad\forall v\in\Hone.
\end{equation}
Since $u-u_h\in L^2(\Omega)$, then $\psi\in H^{1+r}(\Omega)$ with 
$||\psi||_{1+r}\le C ||u-u_h||_0$. Let $\psi_I\in V_h^k$ be the interpolant of 
$\psi$ given by Proposition~\ref{prop:approx}, then for $t=\min(r,1)$ it holds
\begin{equation}
\label{eq:stima-psi}
||\psi-\psi_I||_0 + h |\psi-\psi_I|_1\le C h^{1+t} ||u-u_h||_0.
\end{equation}
We have that
\begin{equation}
\aligned
||u-u_h||_0^2 & = b(u-u_h,u-u_h)=a(u-u_h,\psi)\\
& = a(u-u_h,\psi-\psi_I) + a(u-u_h,\psi_I)= I + I\!I.
\endaligned
\end{equation}
We first estimate the term $I$ as follows
\begin{equation}
		\label{eq:28}
\aligned
I = a(u-u_h,\psi-\psi_I)\le C \big (|u-u_h|_1 h^t ||u-u_h||_0 \big).
\endaligned
\end{equation}
Then 
$$
\aligned
I\!I = a(u-u_h,\psi_I) & = b(f,\psi_I) - a_h(u_h,\psi_I) + a_h(u_h,\psi_I) 
-a(u_h,\psi_I)\\ 
& = [b(f,\psi_I)-b_h(f,\psi_I)] + [a_h(u_h,\psi_I) - a(u_h,\psi_I)]\\
& = I\!I\!I + I\!V.
\endaligned
$$
We have that
\begin{equation}
\label{eq:3}
I\!I\!I = \sum_P\Big ( b^P(f,\psi_I) - b^P(\Pio f,\Pio\psi_I)
-S^P_b((I-\Pio)f,(I-\Pio)\psi_I) \Big).
\end{equation}
It holds
\begin{equation}
\aligned
b^P(f,\psi_I) - b^P(\Pio f,\Pio\psi_I) & = b^P(f-\Pio f,\psi_I-\Pio\psi_I)\\
& \le Ch^t||f-\Pio f||_0||u-u_h||_0
\endaligned
\end{equation}
and
\begin{equation}
\aligned
S_b^P((I-\Pio)f,(I-\Pio)\psi_I) \Big) & \le \beta^*||(I-\Pio)f ||_0 ||(I-\Pio)\psi_I ||_0 \\
& \le C h^t ||(I-\Pio)f ||_0 ||u-u_h||_0.
\endaligned
\end{equation}
Finally, estimating the term $IV$ with standard VEM argument, we obtain
$$
I\!V\le C \Big(  ||u-u_h||_1 + ||u - \Pio u||_{1,h} \Big) h^t ||u-u_h||_0.
$$
Putting together all the estimates, we conclude the proof.
\end{proof}


The $L^2$-error estimate proved above implies the uniform convergence stated in Theorem~\ref{th:uniform-conv}. 

\begin{theorem}
\label{thm:thm1}
Let $T_{h}$ and $T$ be the families of operators associated with
problems~\eqref{eq:source-discrete} and ~\eqref{eq:source}, respectively. Then
the following uniform convergence holds true:
\begin{equation*}
||T-T_{h}||_{\mathcal{L}(L^2(\Omega), L^2(\Omega))}\to 0
\quad\textrm{for}\quad h\to 0.
\end{equation*}
\end{theorem}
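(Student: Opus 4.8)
The plan is to deduce the operator-norm convergence $\|T-T_h\|_{\mathcal{L}(L^2,L^2)}\to 0$ directly from the $L^2$-error estimate of Theorem~\ref{th:L2-error} together with the approximation properties collected in Proposition~\ref{prop:approx}. The key observation is that for a generic $f\in L^2(\Omega)$, we have $Tf=u$ and $T_hf=u_h$ with $u,u_h$ the solutions of the source problems~\eqref{eq:source} and~\eqref{eq:source-discrete}, so that $\|(T-T_h)f\|_0=\|u-u_h\|_0$ is precisely the quantity estimated in~\eqref{eq:L2-error}. The task therefore reduces to bounding each of the three terms on the right-hand side of~\eqref{eq:L2-error} by a quantity of the form $\omega(h)\|f\|_0$ with $\omega(h)\to 0$ as $h\to 0$, uniformly in $f$, and then taking the supremum over $\|f\|_0=1$.

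First I would invoke the elliptic regularity estimate~\eqref{eq:regularity}: since $f\in L^2(\Omega)$, the solution $u=Tf$ belongs to $H^{1+r}(\Omega)$ with $\|u\|_{1+r}\le C\|f\|_0$, where $r\in(\tfrac12,1]$ is the regularity index of $\Omega$. Then I would apply Proposition~\ref{prop:approx} with $s=1+t$, $t=\min(r,1)$, to the function $u$: this yields $v_\pi$ and $v_I=u_I\in V_h^k$ with
\begin{equation*}
|u-u_I|_1\le C h^{t}|u|_{1+t}\le C h^t\|f\|_0,\qquad
|u-u_\pi|_{1,h}\le C h^{t}|u|_{1+t}\le C h^t\|f\|_0.
\end{equation*}
For the third term $\|f-\Pio f\|_0$, I would simply use that $\Pio$ is the $L^2(P)$-orthogonal projection onto $\mathbb{P}_k(P)$, so $\|f-\Pio f\|_{0,P}\le\|f\|_{0,P}$ for each $P$, giving the crude bound $\|f-\Pio f\|_0\le\|f\|_0$. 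Substituting these three bounds into~\eqref{eq:L2-error} produces
\begin{equation*}
\|(T-T_h)f\|_0=\|u-u_h\|_0\le C h^{t}\big(h^{t}+h^{t}+1\big)\|f\|_0\le C h^{t}\|f\|_0,
\end{equation*}
and taking the supremum over $\|f\|_0=1$ gives $\|T-T_h\|_{\mathcal{L}(L^2,L^2)}\le C h^{t}\to 0$ since $t>0$.

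The main subtlety, and the only point requiring care rather than routine estimation, is the uniformity of the constants in $f$ and $h$: one must check that the constant in Theorem~\ref{th:L2-error} is genuinely independent of $f$ (it is, being built from shape-regularity, the polynomial degree, the continuity and coercivity constants of $a$ and $b$, and the stability constants $a_*,a^*,\beta_*,\beta^*$), and that the regularity constant $C$ in~\eqref{eq:regularity} depends only on $\Omega$. A secondary point is that $\Pio f$ in~\eqref{eq:L2-error} denotes the piecewise $L^2$-projection applied mesh-element-wise, so $\|f-\Pio f\|_0^2=\sum_P\|f-\Pio f\|_{0,P}^2\le\sum_P\|f\|_{0,P}^2=\|f\|_0^2$, which is exactly the bound used above; no additional regularity of $f$ beyond $L^2$ is needed. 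With these observations the proof is complete, and in fact it yields the quantitative rate $\|T-T_h\|_{\mathcal{L}(L^2,L^2)}=O(h^{t})$, which is what is subsequently needed to invoke Theorem~\ref{th:uniform-conv}.
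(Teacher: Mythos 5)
Your argument is correct and is essentially the paper's own proof: both deduce the result by combining the $L^2$-estimate of Theorem~\ref{th:L2-error} with the regularity bound~\eqref{eq:regularity}, the approximation results of Proposition~\ref{prop:approx}, and the trivial stability of the piecewise $L^2$-projection, arriving at $\|(T-T_h)f\|_0\le Ch^t\|f\|_0$ and taking the supremum over $\|f\|_0=1$. You merely spell out the intermediate bounds (and the uniformity of the constants) that the paper leaves implicit.
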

\begin{proof}
Given $f\in L^2(\Omega)$, let $Tf$ and $T_hf$ be the solutions to
problems~\eqref{eq:source} and~\eqref{eq:source-discrete}, respectively.
Then, using the $L^2$-estimate of Theorem~\ref{th:L2-error}, the interpolation
and approximation results in Proposition~\ref{prop:approx}, and the stability
condition~\eqref{eq:regularity}, we obtain 
\[
||Tf-T_hf||_{0} \le C h^t ||f||_{0},
\]
where $t=\min(k,r)$, $k\geq 1$ is the order of the method, $r$ the regularity
exponent in~\eqref{eq:regularity}, and $C$ a constant independent of $f$ and
$h$.
From this inequality it follows that
\[
||T-T_{h}||_{\mathcal{L}(L^2(\Omega),L^2(\Omega))} 
= \sup_{f\in L^2(\Omega)}\frac{||Tf-T_{h}f||_0}{||f||_0} \le C h^t,
\]
which implies the uniform convergence.
\end{proof}

We now turn to error estimates for the approximation of the eigenvalues and 
the eigenfunctions. First of all we state a result on the rate of convergence 
for the solutions to the source problems when $f$ is smooth.
\begin{proposition}
\label{co:stima-L2}
Assume that $f\in H^{1+s_1}(\Omega)$ and that the solution $u$ of
problem~\eqref{eq:source} belongs to $H^{1+s_2}(\Omega)$ for some $s_1$ and
$s_2$ greater than $0$, then the estimate~\eqref{eq:L2-error} implies
\begin{equation}
\label{eq:stima-L2-detail}
||u-u_h||_0\le C h^t \Big(h^{\min(k,s_2)}|u|_{1+s_2} + h^{1+\min(k,s_1)} 
|f|_{1+s_1} \Big) 
\end{equation}
where $t=\min(1,r)$.
\end{proposition}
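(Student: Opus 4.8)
The plan is to start from the abstract $L^2$-estimate~\eqref{eq:L2-error} in Theorem~\ref{th:L2-error}, namely
\[
\|u-u_h\|_0\le C h^t \Big(|u-u_I|_1 + |u-u_\pi|_{1,h} + \|f-\Pio f\|_0\Big),
\]
with $t=\min(1,r)$, and simply insert the sharp approximation bounds for each of the three terms under the extra regularity assumptions $u\in H^{1+s_2}(\Omega)$ and $f\in H^{1+s_1}(\Omega)$. The exponent $t$ out front is untouched; all the work is in the parenthesis.

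First I would bound the two terms involving $u$. By the interpolation estimate in Proposition~\ref{prop:approx}, applied with the Sobolev index $s=1+\min(k,s_2)$ (which is admissible precisely because $1\le 1+\min(k,s_2)\le k+1$), one gets $|u-u_I|_1\le C h^{\min(k,s_2)}|u|_{1+\min(k,s_2)}\le C h^{\min(k,s_2)}|u|_{1+s_2}$, and likewise the polynomial approximation estimate gives $|u-u_\pi|_{1,h}\le C h^{\min(k,s_2)}|u|_{1+s_2}$ summing elementwise over $P\in\mathcal{T}_h$. Together these two contribute $C h^{\min(k,s_2)}|u|_{1+s_2}$ to the bracket, which after multiplication by $h^t$ matches the first term on the right-hand side of~\eqref{eq:stima-L2-detail}.

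Next I would bound $\|f-\Pio f\|_0$. Since $\Pio$ is the $L^2(P)$-projection onto $\mathbb{P}_k(P)$, it is optimal in $L^2$, so $\|f-\Pio f\|_{0,P}\le \|f-p\|_{0,P}$ for any $p\in\mathbb{P}_k(P)$; choosing $p=v_\pi$ from Proposition~\ref{prop:approx} (or invoking the first estimate there directly with $v=f$ and $s=1+\min(k,s_1)$, legitimate because $f\in H^{1+s_1}$) yields $\|f-\Pio f\|_{0,P}\le C h_P^{1+\min(k,s_1)}|f|_{1+\min(k,s_1),P}$. Squaring, summing over $P$, and using $h_P\le h$ together with $\min(k,s_1)\le s_1$ gives $\|f-\Pio f\|_0\le C h^{1+\min(k,s_1)}|f|_{1+s_1}$, which is exactly the second term in the bracket of~\eqref{eq:stima-L2-detail}.

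Assembling the three pieces inside the parenthesis of~\eqref{eq:L2-error} and keeping the prefactor $h^t$ gives~\eqref{eq:stima-L2-detail} directly. There is no real obstacle here: the only point requiring a modicum of care is checking that the chosen Sobolev indices $1+\min(k,s_2)$ and $1+\min(k,s_1)$ lie in the admissible range $[1,k+1]$ for Proposition~\ref{prop:approx}, and that one is allowed to replace $|u|_{1+\min(k,s_2)}$ by $|u|_{1+s_2}$ (and similarly for $f$) — which holds whenever the higher seminorm is finite, as assumed. The summation over elements to pass from the local polynomial/projection bounds to global ones is routine.
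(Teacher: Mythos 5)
Your proposal is correct and is precisely the argument the paper intends: the proposition is stated as an immediate consequence of Theorem~\ref{th:L2-error}, obtained by inserting the interpolation and polynomial-approximation bounds of Proposition~\ref{prop:approx} for $|u-u_I|_1$ and $|u-u_\pi|_{1,h}$ and the $L^2$-optimality of $\Pi_k^0$ for $\|f-\Pi_k^0 f\|_0$, exactly as you do. The paper in fact omits the proof entirely, so your write-up supplies the routine details (admissibility of the Sobolev indices in $[1,k+1]$, elementwise summation) consistently with the authors' reading.
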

\begin{remark}
We point out that an eigenfunction $u$ can be seen as the solution of a source
problem with right hand side equal to $\lambda u$, which belongs at least to
$\Hone$. Hence the second term in the right side of~\eqref{eq:stima-L2-detail}
is at least of first order. In general, if the domain is non convex, the
solution $u$ to~\eqref{eq:source} belongs at least  to $H^{1+r}(\Omega)$ with
$r\in (\frac{1}{2},1]$, even if the right hand side is more regular,
see~\eqref{eq:regularity}. However, it might happen that $u$ is more regular,
for example that it belongs to the space $H^{1+s}(\Omega)$ with $s\ge r$, then
estimate~\eqref{eq:stima-L2-detail} reduces to 
\begin{equation}
\label{eq:stima-regolare}
||u-u_h||_0\le C h^{t+\min(k,s)} |u|_{1+s}.
\end{equation}
\end{remark}

Taking into account the above $L^2$-estimates and using the abstract Theorem~\ref{th:abstract-rate-conv}, we obtain the optimal rate of convergence for the approximation of the eigenpairs. 
\begin{theorem}
\label{th:stima-gap}
Let $\lambda_i$ be an eigenvalue of problem~\eqref{eq:eig}, with multiplicity
$m$ (that is $\lambda_i=\dots=\lambda_{i+m-1}$) and let $\mathcal{E}_i$ be the
corresponding eigenspace. We assume that all the elements in $\mathcal{E}_i$
belong to $H^{1+s}(\Omega)$ for some $s\ge r$ (see~\eqref{eq:regularity}).
Moreover, let $\mathcal{E}_{i,h}=\bigoplus_{j=i}^{i+m-1}\mathrm{span}(u_{h,j})$,
where $u_{h,j}$ is the discrete eigenfunction associated with $\lambda_{h,j}$.  
Then, for $h$ small enough, there exists a constant $C$ independent of $h$ such
that
\begin{equation}
\delta(\mathcal{E}_{i}, \mathcal{E}_{i,h})\leq C h^{t+\min(k,s)}
\end{equation}
with $t=\min(1,r)$.
\end{theorem}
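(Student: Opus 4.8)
The plan is to reduce everything to the abstract estimate \eqref{eq:delta} of Theorem~\ref{th:abstract-rate-conv}, which already gives $\delta(\mathcal{E}_i,\mathcal{E}_{i,h})\le C\,\|(T-T_h)|_{\mathcal{E}_i}\|_{\mathcal{L}(L^2(\Omega),L^2(\Omega))}$. Its hypotheses are in force: Theorem~\ref{thm:thm1} supplies the uniform convergence $\|T-T_h\|\to 0$, so that for $h$ small enough exactly $m$ discrete eigenvalues cluster near $\lambda_i$ (Theorem~\ref{th:uniform-conv}) and the two eigenspaces $\mathcal{E}_i$, $\mathcal{E}_{i,h}$ have the same dimension $m$. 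Hence the whole task is to bound $\|(T-T_h)f\|_0$ for $f$ on the unit sphere of the (fixed, finite-dimensional) space $\mathcal{E}_i$.

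So I would fix $f\in\mathcal{E}_i$ with $\|f\|_0=1$. Since every element of $\mathcal{E}_i$ is a linear combination of eigenfunctions all associated with the single eigenvalue $\lambda_i$, one has $Tf=\lambda_i^{-1}f$; in particular $u:=Tf\in\mathcal{E}_i\subset H^{1+s}(\Omega)$ and also the datum $f=\lambda_i u$ itself lies in $H^{1+s}(\Omega)$. Thus Proposition~\ref{co:stima-L2} applies with $s_1=s_2=s$ and yields
$$\|Tf-T_hf\|_0=\|u-u_h\|_0\le C\,h^t\big(h^{\min(k,s)}|u|_{1+s}+h^{1+\min(k,s)}|f|_{1+s}\big),$$
with $t=\min(1,r)$. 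The first summand dominates, so $\|Tf-T_hf\|_0\le C\,h^{t+\min(k,s)}\big(|u|_{1+s}+|f|_{1+s}\big)$.

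To conclude, I would use that $\mathcal{E}_i$ is finite-dimensional, so all norms on it are equivalent: $|u|_{1+s}+|f|_{1+s}\le C\,\|f\|_0=C$, where $C$ depends on $\lambda_i$ and $\mathcal{E}_i$ but not on $h$. Hence $\|(T-T_h)f\|_0\le C\,h^{t+\min(k,s)}$ uniformly over the unit sphere of $\mathcal{E}_i$, i.e. $\|(T-T_h)|_{\mathcal{E}_i}\|_{\mathcal{L}(L^2(\Omega),L^2(\Omega))}\le C\,h^{t+\min(k,s)}$, and \eqref{eq:delta} gives the asserted bound on $\delta(\mathcal{E}_i,\mathcal{E}_{i,h})$.

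The argument is largely bookkeeping once Proposition~\ref{co:stima-L2} is available, so I do not expect a genuine obstacle; the two points deserving care are (i) the observation that an eigenfunction, regarded as the datum of the source problem, has \emph{both} itself and its image under $T$ in $H^{1+s}$ — which here is immediate since $T$ acts on $\mathcal{E}_i$ as the scalar $\lambda_i^{-1}$, allowing the use of Proposition~\ref{co:stima-L2} with $s_1=s_2=s$ rather than only $f\in L^2$ (which would lose the rate) — and (ii) checking that passing from the seminorms $|u|_{1+s},|f|_{1+s}$ to $\|f\|_0$ uses an $h$-independent constant, which is exactly where finiteness of $\dim\mathcal{E}_i$ enters.
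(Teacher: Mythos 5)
Your proposal is correct and follows essentially the same route as the paper: reduce to the abstract bound \eqref{eq:delta} and control $\|(T-T_h)u\|_0$ on the unit sphere of $\mathcal{E}_i$ via the $L^2$ source-problem estimate with smooth data (Proposition~\ref{co:stima-L2}, i.e.\ \eqref{eq:stima-regolare}). The only cosmetic difference is that you justify $|u|_{1+s}\le C\|u\|_0$ on $\mathcal{E}_i$ by equivalence of norms on a finite-dimensional space, whereas the paper appeals to the regularity estimate \eqref{eq:regularity}; both are legitimate and lead to the same conclusion.
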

\begin{proof}
The result directly stems from~\eqref{eq:delta}. Indeed, 
\begin{equation}
\label{eq:T-Th}
\delta(\mathcal{E}_{i}, \mathcal{E}_{i,h})\leq C
||(T-T_h)|_{\mathcal{E}_i}||_{\mathcal{L}(L^2(\Omega), L^2(\Omega))} 
= C \sup_{\substack{u\in\mathcal{E}_i \\ ||u||_0=1}} 
||(T-T_h)u||_0.
\end{equation}
Since $u\in\mathcal{E}_i$, it holds that $u\in H^{1+s}(\Omega)$ with 
$||u||_{1+s}\le C ||u||_0$ (see~\eqref{eq:regularity}). Hence the result follows 
from estimate~\eqref{eq:stima-regolare}.
\end{proof}
Now we state the \emph{a priori} error estimates for the eigenvalues. 
\begin{theorem}
\label{th:eig-rate}
Using the notation of Theorem~\ref{th:stima-gap}, there exists a constant 
$C$ independent of $h$, but depending on $\lambda_i$ such that 
the following error estimate for the eigenvalues holds true:
$$
|\lambda_j-\lambda_{j,h}|\le C h^{2\min(k,s)}\quad\text{ for } j=i,\dots, i+m-1.
$$ 
\end{theorem}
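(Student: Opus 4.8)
The plan is to apply the abstract eigenvalue estimate~\eqref{eq:eig-rate} from Theorem~\ref{th:abstract-rate-conv}, which bounds $|\lambda_j-\lambda_{j,h}|$ by the sum $\sum_{l,k=1}^m |((T-T_h)\phi_k,\phi_l)_{L^2}| + \|(T-T_h)|_{\mathcal{E}_i}\|_{\mathcal{L}(L^2,L^2)}$, where $\phi_1,\dots,\phi_m$ is a basis of $\mathcal{E}_i$. The second term is already controlled by Theorem~\ref{th:stima-gap} (or directly by~\eqref{eq:stima-regolare}), giving a bound of order $h^{t+\min(k,s)}$; since $t=\min(1,r)\le r\le s$, this term is at least of order $h^{2\min(k,s)}$ provided $\min(k,s)\le t+\min(k,s)$, which always holds, but one must check it is actually bounded by $h^{2\min(k,s)}$. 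In fact $t+\min(k,s)\ge 2\min(k,s)$ is equivalent to $t\ge\min(k,s)$, which need not hold; so the second term alone is {\em not} enough, and the whole point is that the first term — the bilinear ``superconvergence'' term $|((T-T_h)\phi_k,\phi_l)_{L^2}|$ — must be shown to be of order $h^{2\min(k,s)}$.

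The heart of the proof is therefore the estimate of $|((T-T_h)\phi_k,\phi_l)_{L^2}|$ for eigenfunctions $\phi_k,\phi_l\in\mathcal{E}_i$. I would write, using self-adjointness of $T$ and $T_h$ together with $T\phi_l=\lambda_i^{-1}\phi_l$,
\begin{equation*}
((T-T_h)\phi_k,\phi_l)_{L^2}=(T\phi_k,\phi_l)_{L^2}-(T_h\phi_k,\phi_l)_{L^2}
\end{equation*}
and then insert $T_h\phi_l$ and exploit the Galerkin-type orthogonality satisfied by $T_h$. The standard trick (Babu\v ska--Osborn, as in~\cite{baos}) is to expand
\begin{equation*}
((T-T_h)\phi_k,\phi_l)_{L^2}=(T\phi_k-T_h\phi_k,\,\phi_l-T_h^*(\text{something}))\dots
\end{equation*}
more precisely to use that for $w=T\phi_k$ and $w_h=T_h\phi_k$ one has, writing $z=T\phi_l$, $z_h=T_h\phi_l$,
\begin{equation*}
((T-T_h)\phi_k,\phi_l)_{L^2}=a(w-w_h,z)-[\text{consistency terms in }a_h,b_h],
\end{equation*}
and that $a(w-w_h,z_h)$ is, up to the VEM consistency errors coming from $a-a_h$ and $b-b_h$, equal to zero; what remains is $a(w-w_h,z-z_h)$ plus these consistency terms. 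The term $a(w-w_h,z-z_h)=(w-w_h)$-type quantity times $(z-z_h)$-type quantity is bounded by $|w-w_h|_1|z-z_h|_1\le C h^{\min(k,s)}|w-w_h|_0$-style estimates — more directly by $\|w-w_h\|_1\|z-z_h\|_1$, and since both $\phi_k,\phi_l$ are eigenfunctions in $H^{1+s}$, the $H^1$-errors are of order $h^{\min(k,s)}$, yielding the product $h^{2\min(k,s)}$. The VEM consistency terms $b(f,v_h)-b_h(f,v_h)$ and $a_h(\cdot,\cdot)-a(\cdot,\cdot)$ must likewise be shown to contribute at order $h^{2\min(k,s)}$; here one uses the consistency property~\eqref{eq:consa}, the definition~\eqref{eq:rhs} of $b_h$, the stabilization scalings~\eqref{eq:staba} and~\eqref{eq:stabb}, and the polynomial approximation estimates of Proposition~\ref{prop:approx}, together with the fact that the relevant data ($\lambda_i\phi_k$, $\lambda_i\phi_l$) lie in $H^{1+s}$ so that $\|f-\Pio f\|_0$ and $|u-u_\pi|_{1,h}$ are each of order $h^{\min(k,s)}$, one factor being absorbed and one matched against the $H^1$ error of the other argument.

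The step I expect to be the main obstacle is the careful bookkeeping of the VEM nonconformity/consistency contributions in the bilinear expansion of $((T-T_h)\phi_k,\phi_l)_{L^2}$: unlike in the classical conforming FEM Babu\v ska--Osborn argument, here $a_h\ne a$ and $b_h\ne b$, so the Galerkin orthogonality is only approximate and one must track several cross terms of the form $[a_h(w_h,z_I)-a(w_h,z_I)]$ and $[b(\phi_k,z_I)-b_h(\phi_k,z_I)]$, showing each is $O(h^{2\min(k,s)})$ by splitting off a polynomial projection, using consistency to kill the polynomial part, and bounding the remainder by the stability constants times the approximation error of {\em both} eigenfunctions. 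Once this double-superconvergence of all consistency terms is established, summing the $m^2$ bilinear terms and combining with the gap estimate of Theorem~\ref{th:stima-gap} in~\eqref{eq:eig-rate} gives
\begin{equation*}
|\lambda_j-\lambda_{j,h}|\le C\big(h^{2\min(k,s)}+h^{t+\min(k,s)}\big)\le C h^{2\min(k,s)},
\end{equation*}
since $t\le 1$ and $s\ge r$, so that $h^{t+\min(k,s)}$ is of higher order than $h^{2\min(k,s)}$ precisely when $\min(k,s)\le t$, and otherwise the first term dominates; in either case the bound $C h^{2\min(k,s)}$ holds, with $C$ depending on $\lambda_i$ through the duality constants. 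This completes the argument.
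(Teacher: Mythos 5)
Your treatment of the first (bilinear) term in~\eqref{eq:eig-rate} is essentially the paper's proof: the paper writes $b((T-T_h)w,z)=a(Tz,(T-T_h)w)$ for $w,z\in\mathcal{E}_i$ and splits it into $a((T-T_h)z,(T-T_h)w)$ plus the two VEM consistency contributions $b(T_hz,w)-b_h(T_hz,w)$ and $a_h(T_hz,T_hw)-a(T_hz,T_hw)$, bounding the first piece by the product of two $H^1$ errors and the other two by products of projection/approximation errors of the two eigenfunctions, each factor being $O(h^{\min(k,s)})$. Your sketch identifies exactly these three contributions and the right tools (consistency~\eqref{eq:consa}, the scalings~\eqref{eq:staba} and~\eqref{eq:stabb}, Proposition~\ref{prop:approx}), so that part of the argument is sound.

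The genuine gap is in your final combination. You correctly observe early on that the operator-norm term is only $O(h^{t+\min(k,s)})$ and that $t+\min(k,s)\ge 2\min(k,s)$ need not hold; but your closing claim that ``in either case the bound $Ch^{2\min(k,s)}$ holds'' is false. Since $t=\min(1,r)\le 1\le k$ and $t\le r\le s$, one always has $t\le\min(k,s)$, with strict inequality in every interesting high-order case (e.g.\ $k\ge2$ on a convex domain with smooth eigenfunctions gives $t=1<\min(k,s)$); there $h^{t+\min(k,s)}\gg h^{2\min(k,s)}$ as $h\to0$, so it is the \emph{second} term that dominates the sum, not the first, and the unsquared estimate~\eqref{eq:eig-rate} cannot deliver the rate $h^{2\min(k,s)}$. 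The resolution is that in the Babu\v ska--Osborn theorem the last term is $\|(T-T_h)|_{\mathcal{E}_i}\|^2_{\mathcal{L}(H,H)}$, i.e.\ squared (the exponent is missing from the statement of Theorem~\ref{th:abstract-rate-conv} as printed); with the square one gets $h^{2t+2\min(k,s)}\le h^{2\min(k,s)}$, which is how the paper's proof implicitly disposes of that term when it says that, thanks to~\eqref{eq:T-Th}, only the first term remains to be bounded. You need to invoke the squared version; as written, your synthesis does not close.
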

\begin{proof}
We apply Theorem~\ref{th:abstract-rate-conv}.  
Thanks to~\eqref{eq:T-Th}, it remains to bound the first term 
in~\eqref{eq:eig-rate}. 
This is equivalent to estimate $b((T-T_{h})w,z)$ for all $w$ and $z$ 
in $\mathcal{E}_i$. After some computations we get
$$
\aligned
b((T-T_{h})w,z) & = a(Tz,(T-T_h)w) \\
& = a((T-T_h)z,(T-T_h)w) + b(T_hz,w)-b_h(T_hz,w) \\
& \qquad - a(T_hz,T_hw) + a_h(T_hz,T_hw) \\
& = I' + I\!I' + I\!I\!I'. 
\endaligned
$$ 
We estimate separately the three terms. It is straightforward to obtain 
\begin{equation}
\label{eq:I}
I' \le ||(T-T_h)z||_1 ||(T-T_h)w||_1 \le C h^{2\min(k,s)}.
\end{equation}
In order to estimate the second term, we proceed as in~\eqref{eq:3} by writing it as a sum over the elements $P\in\mathcal{T}_h$,
\begin{equation}
\label{eq:II}
\aligned
I\!I' & = \sum_P \Big( b^P(T_hz - \Pio (T_hz),w - \Pio w) - 
S_b^P((I-\Pio)T_hz,(I-\Pio)w)\Big) \\
& \le C \sum_P || (I-\Pio)T_hz ||_{0,P} || I-\Pio)w ||_{0,P} \le C h^{2\min(k,s)}.
\endaligned
\end{equation}
Finally,
\begin{equation}
\label{eq:III}
\aligned
I\!I\!I' & =\sum_P \Big(a_h^P(T_hz - \Pio Tz,T_hw - \Pio Tw) 
- a^P(T_hz - \Pio Tz,T_hw - \Pio Tw) \Big) \\
& \le C \sum_P \Big( |T_hz-Tz|_{1,P} +  |(I-\Pio) Tz|_{1,P} \Big) \\
& \qquad\qquad\qquad\Big( |T_hw-Tw|_{1,P} +  |(I-\Pio) Tw|_{1,P} \Big)
\le C h^{2\min(k,s)}.
\endaligned
\end{equation}
Collecting estimates~\eqref{eq:I}-\eqref{eq:III} yields the required estimate. 
\end{proof}
Bearing in mind Remark~\ref{rm:TV}, we state the rate of convergence for 
the error in the approximation of the eigensolutions with respect to the
$H^1$-norm.
To this aim we denote by $\delta_1(E,F)$ the gap between the spaces $E$ and $F$
measured in the $H^1$-norm.
\begin{theorem}
\label{th:delta1}
Using the same notation as in Theorem~\ref{th:stima-gap}, and assuming that all the elements in $\mathcal{E}_i$ belong to $H^{1+s}(\Omega)$ for some $s\ge r$, then for $h$ small enough, there exists a constant $C$ independent of $h$ such that
$$
\delta_1(\mathcal{E}_{i}, \mathcal{E}_{i,h})\leq C h^{\min(k,s)}.
$$
\end{theorem}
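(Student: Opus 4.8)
The plan is to follow the same strategy used in the proof of Theorem~\ref{th:stima-gap}, but now work with the solution operator $T_V:V\to V$ of Remark~\ref{rm:TV} and the corresponding abstract estimate on the gap measured in the $V=H^1$ norm. By the spectral approximation theory for the compact operator $T_V$ (the $H^1$ analogue of~\eqref{eq:delta}), for $h$ small enough one has
\begin{equation*}
\delta_1(\mathcal{E}_i,\mathcal{E}_{i,h})\le C\,\big\|(T-T_h)|_{\mathcal{E}_i}\big\|_{\mathcal{L}(H^1(\Omega),H^1(\Omega))}
= C\sup_{\substack{u\in\mathcal{E}_i\\ \|u\|_1=1}}\|(T-T_h)u\|_1 .
\end{equation*}
So the problem reduces to an $H^1$ error estimate for the source problem when the datum $f$ is an element of the eigenspace $\mathcal{E}_i$, hence as regular as the eigenfunctions themselves: $f=u\in H^{1+s}(\Omega)$ and $Tf=\lambda^{-1}u\in H^{1+s}(\Omega)$ with $\|Tf\|_{1+s}\le C\|f\|_1$.

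First I would invoke the basic VEM $H^1$ error estimate~\eqref{eq:error}, which controls $\|u-u_h\|_1$ by the best polynomial approximation term $|u-u_\pi|_{1,h}$, the interpolation term $|u-u_I|_1$, and the right-hand-side consistency term $\sup_{v_h}|b(f,v_h)-b_h(f,v_h)|/|v_h|_1$. For the first two terms, Proposition~\ref{prop:approx} applied with $u\in H^{1+\min(k,s)}$-smoothness gives a bound of order $h^{\min(k,s)}|u|_{1+\min(k,s)}$, and since $|u|_{1+s}\le C\|Tf\|_{1+s}\le C\|f\|_1$ this is exactly the claimed order. For the consistency term, I would proceed as in~\eqref{eq:3}: write $b(f,v_h)-b_h(f,v_h)$ as a sum over elements of $b^P(f-\Pio f,v_h-\Pio v_h)-S_b^P((I-\Pio)f,(I-\Pio)v_h)$, and bound each piece using the $L^2$-orthogonality of $\Pio$, the stability~\eqref{eq:stabb}, and the approximation estimate $\|f-\Pio f\|_{0,P}\le C h_P^{\min(k+1,1+s_1)}|f|_{\dots,P}$ for $f\in H^{1+s_1}$. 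Since $f$ here is an eigenfunction with $s_1\ge s\ge r>1/2$, this term is of order at least $h^{\min(k+1,1+s)}$, which is one power of $h$ better than $h^{\min(k,s)}$ and therefore harmless.

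Collecting these contributions yields $\|(T-T_h)u\|_1=\|Tf-T_hf\|_1\le C h^{\min(k,s)}\|f\|_1$ uniformly for $f\in\mathcal{E}_i$, and substituting into the gap estimate above gives the stated bound. The main technical point — and the place where some care is needed — is the justification that the abstract Babu\v ska--Osborn-type gap estimate~\eqref{eq:delta} carries over verbatim to the $H^1$-setting for $T_V$; this requires checking that $T_V$ is compact and self-adjoint with respect to the relevant inner product and that $\|T_V-T_{V,h}\|_{\mathcal{L}(H^1,H^1)}\to0$, the latter following from the $O(h^t)$ bound already available via Theorem~\ref{th:stima-gap} and Proposition~\ref{prop:approx}. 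Everything else is a routine repetition of the element-by-element VEM estimates, now stopping at the $H^1$ level rather than pushing through a duality argument to $L^2$, which is precisely why the rate is $h^{\min(k,s)}$ instead of $h^{t+\min(k,s)}$.
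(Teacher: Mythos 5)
Your argument is correct and follows exactly the route the paper intends: the paper states Theorem~\ref{th:delta1} without proof, pointing to Remark~\ref{rm:TV}, i.e.\ to redefining the solution operator as $T_V:V\to V$, applying the abstract gap estimate~\eqref{eq:delta} in the $H^1$-topology, and then using the $H^1$ source estimate~\eqref{eq:error} together with Proposition~\ref{prop:approx} on eigenspace data. Your additional checks (compactness and self-adjointness of $T_V$ with respect to $a(\cdot,\cdot)$, uniform convergence in $\mathcal{L}(H^1,H^1)$, and the higher-order bound on the right-hand-side consistency term) are precisely the ``due modifications'' the remark alludes to, so nothing is missing.
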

\begin{remark}
\label{rm:nonstab}
The above analysis has been carried on considering the stabilized discrete 
bilinear form $b_h$, whose local counterpart is given in~\eqref{eq:bh1}. However, in~\cite{GV} it has been shown that the results stated in Theorems~\ref{th:stima-gap}-\ref{th:delta1} hold true also when $b_h$ is defined using the \emph{non-stabilized} local version in~\eqref{eq:bh0}.
\end{remark}

\subsection{Numerical results}
\label{se:test-conf}

Here and in the following sections devoted to the numerical experiments, we
focus on the solution to problem~\eqref{eq:Laplace} on a square and an
\emph{L-shaped} domain. It is well known that in the first test case the
eigenfunctions are analytic, while in the latter they can be singular due to
the presence of a reentrant corner in the domain. All the results that we are
going to present are not new, including the figures which have been already
published in~\cite{GV,GMV,CGMMV}.

The implementation of the method requires a precise definition for the
stabilization terms. Here we are going to consider one possible choice,
other possibilities are reported in~\cite{GV}. For $v_h$ and $w_h$ in $\VhP$, the
stabilizing bilinear forms $S^P_a(v_h,w_h)$ and $S^P_b(v_h,w_h)$
(see~\eqref{eq:ah} and~\eqref{eq:bh1}) are defined using the local degrees of
freedom associated to $\VhP$. Let $N_P$ be the dimension of 
$\VhP$, then $\m{v}$ and $\m{w}$ stand for the vectors in $\mathbb{R}^{N_P}$
with components the local degrees of freedom associated to $v_h$ and $w_h$.
More precisely, we set 
\begin{equation}
\label{eq:def-stab}
S_a^P(v_h,w_h) = \alpha_P\m{v}^\top\m{w}\qquad 
\text{ and }\qquad S_b^P(v_h,w_h) = \beta_Ph_P^2\m{v}^\top\m{w}, 
\end{equation}
where $\alpha_P$ and $\beta_P$ are constants independent of $h$.  The numerical
results we report here are obtained choosing $\alpha_P$ as the mean value of
the eigenvalues of the local matrix associated to the consistency term
$a^P(\Pinabla v_h,\Pinabla w_h)$. The parameter $\beta_P$ is given by the mean
value of the eigenvalues of the local mass matrix associated to
$\frac{1}{h_P^2}b^P(\Pio v_h,\Pio w_h)$. With this definition $\alpha_P$ and
$\beta_P$ depend only on the shape of $P$.

{\bf Square domain} Let $\Omega$ be the square $(0,\pi)^2$, then the
eigensolutions of~\eqref{eq:Laplace} are given by 
\begin{equation}
\label{eq:square-eig}
\aligned
& \lambda_{i,j} = (i^2 + j^2)\quad \text{for } i,j\in\mathbb{N},
\text{ with } i,j\neq 0\\
& u_{i,j}(x,y) = \sin(ix) \sin(jy)\quad \text{for } (x,y)\in\Omega.
\endaligned
\end{equation}
We consider different refinements of a \emph{Voronoi} mesh, an example of them
is shown in Figure~\ref{fg:voronoi} left. 
\begin{figure}
\begin{center}
\includegraphics[width=.45\textwidth]{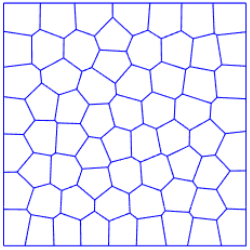}
\hspace{.5cm}
\includegraphics[width=.45\textwidth]{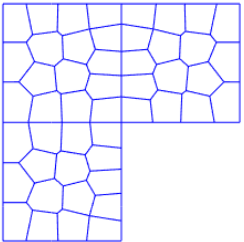}
\end{center}
\caption{Examples of Voronoi mesh of the unit square and the L-shaped domain}
\label{fg:voronoi}
\end{figure}
Figure~\ref{fg:rate-square} reports the error for the first six eigenvalues
obtained with different polynomial degrees $k=1,2,3,4$ on meshes with size
$h=\frac{\pi}{8},\frac{\pi}{16}, \frac{\pi}{32}, \frac{\pi}{64}$. The slopes
of the lines corresponding to the error for each eigenvalue clearly reflects
the theoretical rate of convergence stated in Theorem~\ref{th:eig-rate}. For
$k=4$ the error is close to the machine precision for the two last refinements,
therefore the effect of propagation of rounding error prevents further
decreasing. 
\begin{figure}
\begin{center}
\includegraphics[width=\textwidth]{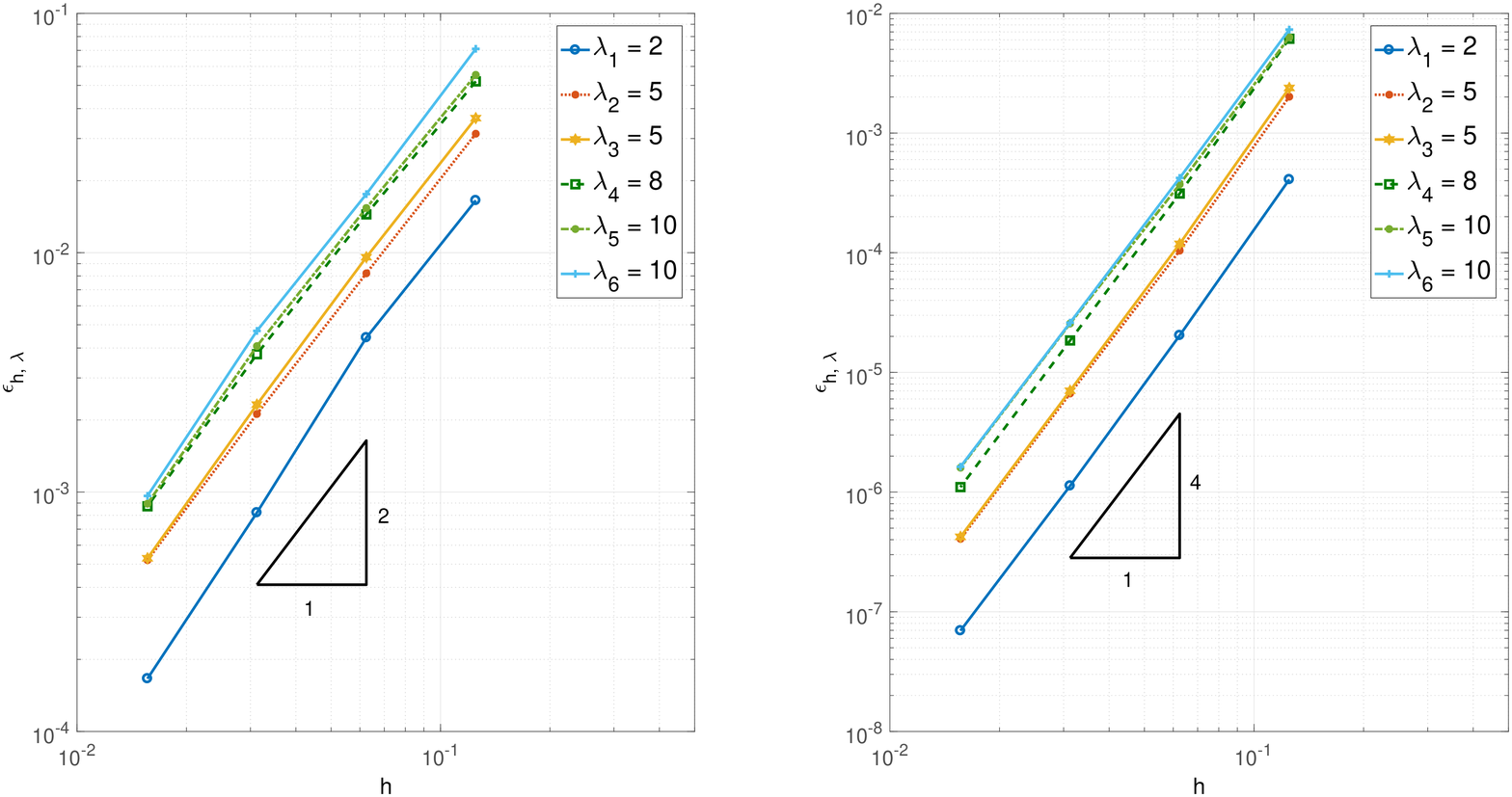} \\
\includegraphics[width=\textwidth]{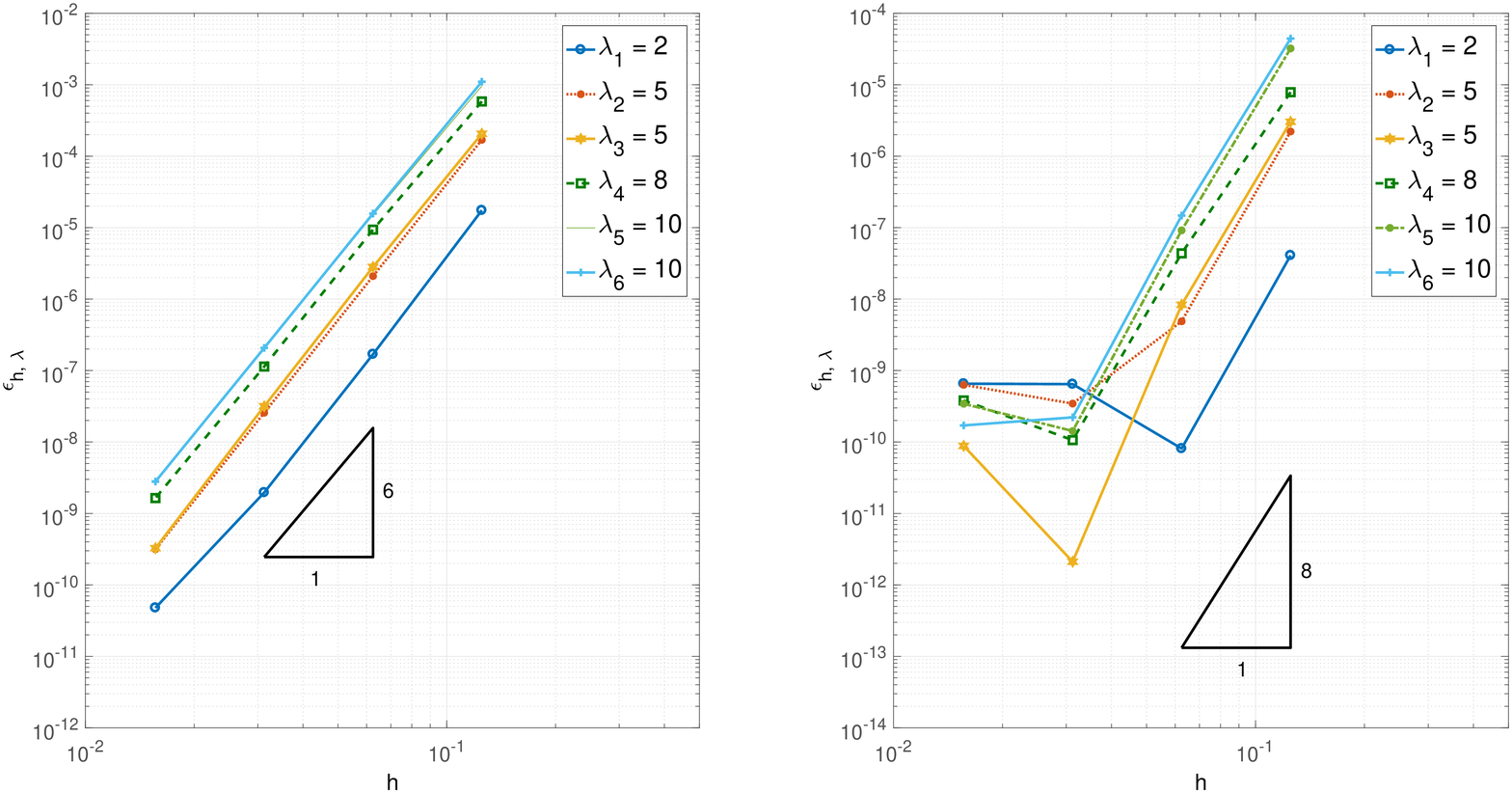}
\end{center}
\caption{Rate of convergence for the first six eigenvalues on the square
domain. Top left: $k=1$, top right: $k=2$, bottom left: $k=3$, bottom right: 
$k=4$}
\label{fg:rate-square}
\end{figure}
Remark~\ref{rm:nonstab} points out that the stabilization on the right hand
side is not necessary at the theoretical level. 
In order to understand if it might impact the numerical results, we report in
Table~\ref{tb:paragone} a comparison between the errors obtained using the
stabilized or non-stabilized bilinear form $b_h$, corresponding to the first
four distinct eigenvalues for $k=1$ and $k=4$. It might be appreciated that the
magnitude of the error is actually the same. A more detailed discussion on the
role of the stabilizing parameters will performed in
Section~\ref{se:parameter}.
\begin{table}[!h]
\centering
\begin{tabular}{c|cc|cc|cc|cc}
\toprule
\multicolumn{9}{c}{$k=1$}\\
\midrule
$h$ &\multicolumn{2}{|c|}{$\lambda_1=2$}
    &\multicolumn{2}{c|}{$\lambda_2=5$}
    &\multicolumn{2}{c|}{$\lambda_4=8$}
    &\multicolumn{2}{c}{$\lambda_5=10$}\\
\midrule
&stab& non stab&stab& non stab&stab& non stab&stab& non stab\\
\midrule
$\pi/8$
     & $1.654e\smm2$ & $1.766e\smm2$ & $3.650e\smm2$ & $4.098e\smm2$
     & $5.196e\smm2$ & $6.384e\smm2$ & $7.107e\smm2$ & $8.488e\smm2$\\
$\pi/16$
     & $4.418e\smm3$ & $4.611e\smm3$ & $9.578e\smm3$ & $1.026e\smm2$
     & $1.446e\smm2$ & $1.558e\smm2$ & $1.758e\smm2$ & $1.926e\smm2$ \\
$\pi/32$
     & $8.206e\smm4$ & $8.784e\smm4$ & $2.320e\smm3$ & $2.465e\smm3$
     & $3.768e\smm3$ & $4.003e\smm3$ & $4.715e\smm3$ & $5.038e\smm3$ \\
$\pi/64$
     & $1.662e\smm4$ & $1.808e\smm4$ & $5.289e\smm4$ & $5.663e\smm4$
     & $8.722e\smm4$ & $9.340e\smm4$ & $9.643e\smm4$ & $1.042e\smm3$ \\
\midrule
\multicolumn{9}{c}{$k=4$}\\
\midrule
$h$ &\multicolumn{2}{|c|}{$\lambda_1=2$}
    &\multicolumn{2}{c|}{$\lambda_2=5$}
    &\multicolumn{2}{c|}{$\lambda_4=8$}
    &\multicolumn{2}{c}{$\lambda_5=10$}\\
\midrule
&stab& non stab&stab& non stab&stab& non stab&stab& non stab\\
\midrule
$\pi/8$
    & $4.087e\smm08$ & $4.094e\smm08$ & $3.033e\smm06$ & $3.046e\smm06$
    & $7.856e\smm06$ & $7.915e\smm06$ & $4.428e\smm05$ & $4.470e\smm05$\\
$\pi/16$
    & $8.101e\smm11$ & $8.098e\smm11$ & $8.329e\smm09$ & $8.340e\smm09$
    & $4.364e\smm08$ & $4.373e\smm08$ & $1.475e\smm07$ & $1.475e\smm07$ \\
$\pi/32$
    & $6.451e\smm10$ & $6.449e\smm10$ & $2.109e\smm12$ & $1.283e\smm12$
    & $1.062e\smm10$ & $1.060e\smm10$ & $2.211e\smm10$ & $2.223e\smm10$ \\
$\pi/64$
    & $6.541e\smm10$ & $6.542e\smm10$ & $8.818e\smm11$ & $8.829e\smm11$
    & $3.815e\smm10$ & $3.817e\smm10$ & $1.706e\smm10$ & $1.707e\smm10$ \\
\bottomrule
\end{tabular}
\caption{Comparison of the errors for the first eigenvalues between stabilized
and non stabilized bilinear form $b_h$}
\label{tb:paragone}
\end{table}

{\bf L-shaped domain}
The Laplace eigenvalue problem with Neumann boundary conditions on the non
convex L-shaped domain $\Omega=(-1,1)^2\setminus(0,1)\times(-1,0)$ (displayed
in Figure~\ref{fg:voronoi} right with an example of the used Voronoi mesh)
is a well-known benchmark test to check the capability of the numerical scheme
to approximate singular eigensolutions.
The analysis presented above for the Dirichlet eigenvalue problem extends
analogously to this situation.
In Figure~\ref{fg:L-shaped}, we show the convergence history for the first and
the third eigenvalues computed with $k=1,2,3$ and the stabilization parameter
chosen as in~\eqref{eq:def-stab}. The reference value for the eigenvalues has
been taken form the benchmark solution set~\cite{benchmark}. Due to the
presence of the reentrant corner, the first eigensolution is singular
belonging to $H^{1+r}(\Omega)$ with $r=2/3-\varepsilon$, hence the rate of
convergence for the first eigenvalue results to be $4/3$. On the other hand,
the third eigenfunction is analytic and the scheme provides the correct
$O(h^{2k})$ rate of convergence.
\begin{figure}
\begin{center}
\includegraphics[width=\textwidth]{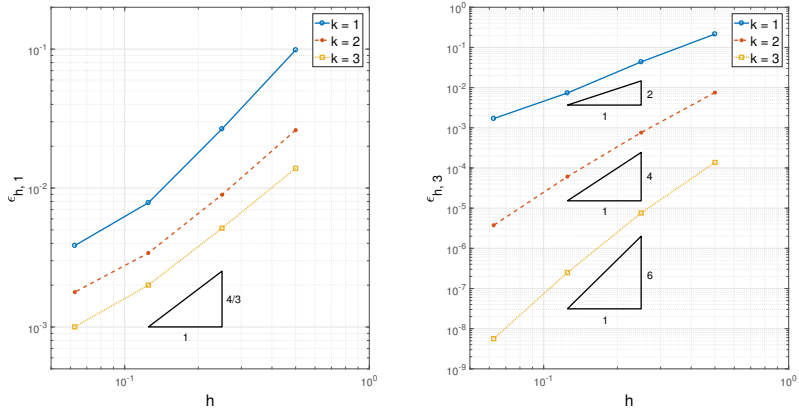}
\end{center}
\caption{Rate of convergence for the first eigenvalue (left) and the
third one (right) on the L-shaped domain}
\label{fg:L-shaped}
\end{figure}

\section{Extension to nonconforming and \emph{hp} version of VEM} 
\label{se:nonconf-hp}
This section is devoted to the presentation of the nonconforming, $p$ and $hp$
versions of the discrete virtual space and reports the main results relative
to the approximation of the eigenvalue problem. In particular, we are going to
report on the results of~\cite{GMV,CGMMV}.

\subsection{Nonconforming VEM}
\label{se:nc}
We start by treating the nonconforming case following~\cite{GMV}. We introduce
the broken Sobolev space 
$$
H^s_h=\left\{v\in L^2(\Omega): v|_P\in H^s(P)\ \forall P\in\T\right\}
$$ 
and the corresponding broken norm
$$
\vert\vert v\vert\vert_{s,h}=\left (\sum_{P\in\T}\vert\vert v \vert\vert_{s,P}^2\right )^{\frac{1}{2}}.
$$
With the aim of constructing the nonconforming space, we need some definitions. Let $P^+$ and $P^-$ be two elements sharing the edge $e$ and 
$\mathbf{n}_e^\pm$ the outward unit normal vector to $\partial P^\pm$. The jump of $v$ across any internal edge $e$ is 
$\jump{v}=v^+\mathbf{n}_e^+ + v^-\mathbf{n}_e^-$, where $v^\pm$ is the 
restriction of $v$ to $P^\pm$. If $e\in\edges^\partial$ we set 
$\jump{v}=v_e\mathbf{n}_e$.

The construction of the nonconforming virtual element space is based on a different definition of the local space $\VhP$. More precisely, instead of imposing that the  functions have a continuous trace along the element boundary, we prescribe the normal derivative on each edge of $P$ to be a polynomial of degree $k-1$, that is $\VhP$ is given by~\eqref{eq:vemP} and~\eqref{eq:pinabla} with 
\begin{equation}
\widetilde{V}_h^k(P) = 
\left\{ v_h\in H^1(P):\ \frac{\partial v_h}{\partial\mathbf n}\in\mathbb{P}_{k-1}(P)\ 
\forall e\subset\partial P,\ \Delta v_h\in\mathbb{P}_k(P)\right\}. 
\end{equation}
The degrees of freedom are given by the moments of $v_h$ of order up to $k-1$ on each edge $e$ of $P$ and the moments up to order $k-2$ on $P$. As in the conforming case, 
$\Pinabla v_h$ can be exactly evaluated by means of these degrees of freedom.  

The \emph{global nonconforming virtual element space} is then defined for, 
$k\ge 1$, as 
\begin{equation}
\label{eq:nonconf}
\Vhnc=\left\{ v_h\in\Hnc: v_h|_P\in\VhP\ \forall P\in\T \right\},
\end{equation}
where as in the finite element framework, 
\begin{equation} 
\label{eq:H1nc}
\Hnc=\left\{ v\in H^1_h: \int_e \jump{v}\cdot \mathbf{n}_e p\,ds=0\ 
\forall p\in \mathbb{P}_{k-1}(e),\ \forall e\in\edges\right\},
\end{equation}
being $\mathbf{n}_e$ the outward normal unit vector to $e$ with orientation fixed once and for all. It is clear that $\Vhnc$ is not a subspace of $\Hone$, this will imply that in the error analysis we have to take into account a \emph{consistency error}.

Proposition~\ref{prop:approx} still holds true with $v_I$ being the interpolant of $v$ constructed using the above degrees of freedom, see for example~\cite{CGPS}.

The discrete eigenvalue problem reads:
find $(\lambda_h,u_h)\in\mathbb{R}\times\Vhnc$ with $u_h\neq 0$ such that
\begin{equation}
\label{eq:eig-nc}
a_h(u_h,v_h)=\lambda_h b_h(u_h,v_h)\quad\forall v_h\in\Vhnc, 
\end{equation}
where the bilinear forms $a_h$ and $b_h$ are defined in~\eqref{eq:ah} and in~\eqref{eq:bh1}. We recall also the discrete source problem: 
given $f\in L^2(\Omega)$ find $u_h\in\Vhnc$ such that
\begin{equation}
\label{eq:sourcenc}
a_h(u_h,v_h)=b_h(f,v_h)\quad\forall v_h\in\Vhnc. 
\end{equation}
We define, as usual, the resolvent operator 
$T_h: L^2(\Omega)\to L^2(\Omega)$ with $T_hf=u_h\in\Vhnc$ solution
to~\eqref{eq:sourcenc} . We notice that in this case $T_h$ cannot be defined
with range in $\Hone$, therefore we shall obtain a result similar to
Theorem~\ref{th:delta1} with the gap measured in the broken $H^1$-norm. 

The analysis of the nonconforming case can be carried on with the same
arguments used in the conforming case, stemming from the broken $H^1$-norm
estimate for the error of the solution of the source problem. We start with the
following result proved in~\cite{ALM}: there exists a positive constant $C$
independent of $h$ such that 
\begin{equation}
\label{eq:errornc}
\aligned
\vert\vert u-u_h \vert\vert_{1,h} & \le C\Big ( \vert u-u_I\vert_{1,h} + \vert u - u_{\pi}\vert_{1,h}  \\
& + \sup_{v_h\in \Vhnc}\frac{\vert b(f,v_h)-b_h(f,v_h)\vert}{\vert v_h\vert_{1,h}}
+  \sup_{v_h\in\Vhnc}\frac{\nc(u,v_h)}{|v_h|_{1,h}}
\Big ),
\endaligned
\end{equation}
where 
\begin{equation}
\label{eq:nc}
\nc(u,v_h)=\sum_{P\in\T}a^P(u,v_h)-b(f,v_h)=
\sum_{e\in\edges^0}\int_e \nabla u\cdot\jump{v_h}\,ds.
\end{equation}
The term $\nc(u,v_h)$ represents an additional \emph{consistency error} which,
in the spirit of the \emph{second Strang Lemma}~\cite[Th. 4.2.2]{ciarlet}, is
due to the fact that the discrete functions are not continuous across
inter-element edges. With standard arguments using the degrees of freedom of
the nonconforming virtual elements on the element boundary and a Poincar\'e
inequality (see~\cite{brenner}), one can prove that for
$u\in H^{1+r}(\Omega)$, with $r>1/2$,
\begin{equation}
\label{eq:stimaR}
\nc(u,v_h)\le C h^r ||u||_{1+r}|v_h|_{1,h}\quad\forall v_h\in\Vhnc. 
\end{equation}

In order to obtain the uniform convergence of $T_h$ to $T$, we first show 
the $L^2$-error estimate for the solution to the source problem. 
\begin{theorem}
\label{th:L2nc}
Let $f\in L^2(\Omega)$, $u\in\Hone$ and $u_h\in\Vhnc$ be the solutions 
to~\eqref{eq:source} and~\eqref{eq:sourcenc}, respectively. Then there exists a constant $C$, independent of $h$, such that for $t=\min(1,r)$
\begin{equation}
\label{eq:L2nc}
\aligned
||u-u_h||_0 & \le C h^t \Big ( |u-u_h|_{1,h} + |u-u_{\pi}|_{1,h} \\
& \qquad + ||f-\Pio f||_0 + \sup_{v_h\in\Vhnc}\frac{\nc(u,v_h)}{|v_h|_{1,h}}
\Big).
\endaligned
\end{equation} 
\end{theorem}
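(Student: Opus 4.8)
The plan is to mimic the duality argument used in the proof of Theorem~\ref{th:L2-error}, paying attention to the fact that $u_h\in\Vhnc$ is not conforming, so that extra consistency terms coming from the jumps across interelement edges will appear. First I would introduce the dual solution $\psi\in\Hone$ defined by $a(\psi,v)=b(u-u_h,v)$ for all $v\in\Hone$; by the regularity result~\eqref{eq:regularity} we have $\psi\in H^{1+r}(\Omega)$ with $||\psi||_{1+r}\le C||u-u_h||_0$, and by Proposition~\ref{prop:approx} its interpolant $\psi_I\in\Vhnc$ satisfies $||\psi-\psi_I||_0+h|\psi-\psi_I|_{1,h}\le Ch^{1+t}||u-u_h||_0$ with $t=\min(1,r)$.

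Next I would expand $||u-u_h||_0^2=b(u-u_h,u-u_h)$ using the dual problem. Since $\psi_I$ is nonconforming, integration by parts must be done element by element, so
\begin{equation*}
||u-u_h||_0^2 = \sum_{P\in\T}a^P(u-u_h,\psi)
+ \sum_{e\in\edges^0}\int_e\nabla\psi\cdot\jump{u-u_h}\,ds .
\end{equation*}
The jump term involving $\jump{u_h}$ is exactly of the form $\nc(\psi,u_h)$-type and, using the degrees of freedom of $\Vhnc$ on the edges together with the Poincar\'e-type estimate behind~\eqref{eq:stimaR} (now applied with $\psi$ playing the role of the smooth function), it is bounded by $Ch^t|\psi|_{1+t}|u_h|_{1,h}\le Ch^t||u-u_h||_0\,(|u|_{1,h}+|u-u_h|_{1,h})$, while the jump of the exact solution $u$ vanishes since $u\in\Hone$. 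The remaining term $\sum_P a^P(u-u_h,\psi)$ I would split as $\sum_P a^P(u-u_h,\psi-\psi_I)+\sum_P a^P(u-u_h,\psi_I)$. The first piece is controlled by $C|u-u_h|_{1,h}|\psi-\psi_I|_{1,h}\le Ch^{1+t}|u-u_h|_{1,h}||u-u_h||_0$ and hence absorbed. For the second piece I would add and subtract $a_h(u_h,\psi_I)$ and $b_h(f,\psi_I)$ exactly as in the proof of Theorem~\ref{th:L2-error}, obtaining the three contributions: a term $b(f,\psi_I)-b_h(f,\psi_I)$ which is estimated, via~\eqref{eq:bh1} and~\eqref{eq:stabb}, by $Ch^t||f-\Pio f||_0||u-u_h||_0$ plus the analogous stabilization contribution; a term $a_h(u_h,\psi_I)-a(u_h,\psi_I)$ controlled by the standard VEM consistency argument by $C(|u-u_h|_{1,h}+|u-u_\pi|_{1,h})h^t||u-u_h||_0$; and the consistency residual $\nc(u,\psi_I)$, which by~\eqref{eq:nc} reduces to edge integrals and is bounded using~\eqref{eq:stimaR}-type reasoning by $Ch^t\big(\sup_{v_h}\nc(u,v_h)/|v_h|_{1,h}\big)||u-u_h||_0$ after noting $|\psi_I|_{1,h}\le C||u-u_h||_0/h^t$ is not quite what is needed — instead I would keep $\nc(u,\psi_I)=\nc(u,\psi_I-\psi)$ since $\nc(u,\psi)=0$ by the continuity of $\psi$, and then bound it by $Ch^t\sup_{v_h\in\Vhnc}\frac{\nc(u,v_h)}{|v_h|_{1,h}}\cdot\frac{|\psi_I-\psi|_{1,h}}{h^t}\le C\sup_{v_h}\frac{\nc(u,v_h)}{|v_h|_{1,h}}\,h^t||u-u_h||_0$ — wait, more directly, $\nc(u,\cdot)$ is linear and vanishes on conforming functions, so it only sees $\psi_I$ through its nonconformity, which is of size $h^t||u-u_h||_0$ in the appropriate edge norm. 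Collecting every term, each carries a factor $h^t||u-u_h||_0$ times one of the four quantities on the right-hand side of~\eqref{eq:L2nc}, and dividing through by $||u-u_h||_0$ yields the claim.

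The main obstacle I expect is the careful treatment of the two nonconformity-generated consistency residuals — the one coming from testing the dual problem against the nonconforming $\psi_I$, and the one coming from $\nc(u,\psi_I)$ — and in particular making sure they are estimated so that the correct power $h^t$ (and not merely $h^0$) is extracted, which hinges on the fact that $\nc(w,\cdot)$ annihilates conforming functions so only the $O(h^{1+t})$ nonconforming part of $\psi_I$ contributes, compensated against the loss of one power of $h$ inherent in the edge trace estimates. Everything else is a routine repetition of the conforming proof of Theorem~\ref{th:L2-error}.
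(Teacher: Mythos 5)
Your proposal follows the same route as the paper's proof: the duality argument of Theorem~\ref{th:L2-error}, elementwise integration by parts producing the additional consistency term $\nc(\psi,u_h)$, the splitting of $\sum_P a^P(u-u_h,\psi_I)$ through the continuous and discrete source problems (which generates $\nc(u,\psi_I)$ together with the analogues of the terms $I\!I\!I$ and $I\!V$ of the conforming proof), and the use of~\eqref{eq:stimaR} for the nonconformity residuals. The sign in front of your edge-jump term is off, but this is immaterial since only absolute values are estimated.

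There is, however, one step that as written does not deliver the stated bound. You estimate $\nc(\psi,u_h)\le Ch^t\|\psi\|_{1+t}\,|u_h|_{1,h}\le Ch^t\|u-u_h\|_0\big(|u|_{1,h}+|u-u_h|_{1,h}\big)$. The quantity $|u|_{1,h}=|u|_1$ is an $O(1)$ constant that is not among the four terms on the right-hand side of~\eqref{eq:L2nc}; carrying it through would leave an $O(h^t)$ remainder and destroy the optimal rate $h^{t+\min(k,s)}$ that Theorem~\ref{th:stima-gap} must inherit from this estimate. The remedy is the same device you invoke (correctly) for $\nc(u,\psi_I)$: since $u\in\Hone$ is conforming, $\jump{u_h}=\jump{u_h-u}$ on every edge, and these jumps retain the zero-moment property against $\mathbb{P}_{k-1}(e)$; running the trace/Poincar\'e argument behind~\eqref{eq:stimaR} with $u_h-u$ in place of $v_h$ yields directly $|\nc(\psi,u_h)|\le Ch^t\|u-u_h\|_0\,|u-u_h|_{1,h}$, with no $|u|_1$ term. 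With this correction — and with your somewhat meandering but ultimately correct observation that $\nc(u,\cdot)$ annihilates the conforming part of $\psi_I$, which is what produces the factor $h^t$ multiplying the sup term — the proof closes exactly as in the paper.
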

\begin{proof}
The proof follows the same lines as that of Theorem~\ref{th:L2-error}. We
have to take into account the consistency error. Let $\psi\in\Hone$ be the
solution to~\eqref{eq:duality}, then we have
\[
\|u-u_h\|^2_0=\sum_P\left(a^P(u-u_h,\psi-\psi_I)+a^P(u-u_h,\psi_I)\right)
+\nc(\psi,u_h).
\]
Thanks to~\eqref{eq:28} and~\eqref{eq:stimaR}, it remains to estimate the
second term in the sum, which can be split as follows:
\[
\aligned
& \sum_P a^P(u,\psi_I)=b(f,\psi_I)+\nc(u,\psi_I)\\
& \sum_P a^P(u_h,\psi_I)=b_h(f,\psi_I)
+\sum_P\left(a^P(u_h,\psi_I)-a_h^P(u_h,\psi_I)\right).
\endaligned
\]
We use again~\eqref{eq:stimaR} and observe that the other terms can be
estimated as the terms III and IV in the proof of Theorem~\ref{th:L2-error}. 
Putting together all the estimates yields the required bound.
\end{proof}

Assuming that $f\in H^{1+s_1}(\Omega)$ and that the solution $u$ to
problem~\eqref{eq:source} belongs to $H^{1+s_2}(\Omega)$ for some $s_1>0$ and
$s_2 >\frac{1}{2}$, then we obtain again the bound~\eqref{eq:stima-L2-detail}.
This optimal rate of convergence yields the rate convergence for the gap
between the eigenspaces and for the eigenvalue error as in
Theorems~\ref{th:stima-gap} and~\ref{th:eig-rate}.  
The analogous result as the one in Theorem~\ref{th:delta1} can be obtained 
if the gap is measured using the broken $H^1$-norm. 

Numerical results confirming the theory are presented in~\cite{GMV} where the
domain is a square or L-shaped.
The nonconforming VEM has been tested on four different types of meshes
including also non convex elements providing always optimal rate of
convergence in the case of analytic eigenfunctions. In particular,
in~\cite{GMV} a comparison of the rate of convergence between conforming and
nonconforming approximation has been presented showing very close behavior.
The benchmark with the L-shaped domain has been also considered and again the
results agree with those presented in Figure~\ref{fg:L-shaped}.

\subsection{$hp$ version of VEM} 
\label{se:hp}
Let us now recall briefly the $p$ and $hp$ version of the virtual
element method and show how these methods can be applied to the discretization
of the eigenvalue problem. We refer, in particular, to~\cite{CGMMV} although we
are considering the simpler model problem~\eqref{eq:Laplace}, and
to~\cite{BCMR} for the basic principles of $hp$ virtual elements.

In this case the spaces we are considering, are labeled with a subindex
$n\in\mathbb{N}$ and refer to conforming polygonal decompositions of 
$\Omega$ denoted by $\Tn$.  Given an element $P\in\Tn$ and the polynomial
degree $p\in\mathbb{N}$, $V_n^p(P)$ is the local virtual space and coincides
with the space $\VhP$, with the due changes in the notation.
Hence the global space $V_n^p$ is the subspace of $\Hone$ defined as 
$V_n^p=\{v_n\in\Hone :\ v_n|_P\in V_n^p(P)\ \forall P\in\Tn\}$. Analogously, 
we shall use the subindex $n$ instead of $h$ in all the definitions involving
discrete quantities.

The results of approximation in the space $V_n^p$ are similar to those in Proposition~\ref{prop:approx}, but trace the dependence on the degree $p$, so that the exponential convergence of the error can be deduced. 
\begin{proposition}
\label{prop:approxp}
Given $P\in\Tn$ and $u\in H^{1+s}(P)$, $s>0$, for all $p\in\mathbb{N}$, there exists $u_\pi\in\mathbb{P}_p(P)$ and a constant $C$ independent of $p$, such that 
\begin{equation*}
|u-u_\pi|_{\ell,P}\le C \frac{h_P^{1+\min(p,s)-\ell}}{p^{1+s-\ell}} ||u||_{1+s,P}\quad\text{for } 0\le\ell\le s.
\end{equation*}
Moreover, given $u\in\Hone$ with $u|_P\in H^{1+s}(P)$ for all $P\in\Tn$ and for some $s\ge 1$, there exists $u_I\in V_n^p$ such that 
$$
|u-u_I|_1\le C \frac{h^{\min(p,s)}}{p^{s-1}}
\left ( \sum_{P\in\Tn} ||u||^2_{1+s,P}\right)^\frac{1}{2}.
$$ 
\end{proposition}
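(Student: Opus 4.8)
The plan is to establish the two estimates separately. The first is a $p$-explicit polynomial approximation bound on a single star-shaped polygon, obtained by scaling to a reference configuration and quoting the classical $p$-version approximation theory; the second is an interpolation bound into $V_n^p$ obtained by matching degrees of freedom and feeding in the first bound.

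For the polynomial estimate I would first rescale $P$ --- which by hypothesis is star-shaped with respect to a disk of radius $\ge\rho h_P$ --- to a polygon $\widehat P$ of unit diameter, still star-shaped with respect to a disk of radius bounded below by $\rho$ and with a uniformly bounded number of edges, so that $\widehat P$ ranges over a compact family. On $\widehat P$ one invokes the $p$-version polynomial approximation theory of Babu\v ska--Suri type (see \cite{BCMR,CGMMV} and the references therein): for $\widehat u\in H^{1+s}(\widehat P)$ there is $\widehat u_\pi\in\mathbb{P}_p(\widehat P)$ with $|\widehat u-\widehat u_\pi|_{\ell,\widehat P}\le C\,p^{-(1+s-\ell)}\|\widehat u\|_{1+s,\widehat P}$ for $0\le\ell\le s$, the constant depending only on $s$ and $\rho$. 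Transplanting this back to $P$ through the scaling relations $|v|_{m,P}\simeq h_P^{d/2-m}|\widehat v|_{m,\widehat P}$ and treating separately the case $p<s$, where the achievable power of $h_P$ saturates at $p$ while the $p$-decay still carries the full $s$, produces the factor $h_P^{1+\min(p,s)-\ell}\,p^{-(1+s-\ell)}$ multiplying $\|u\|_{1+s,P}$, which is the asserted estimate.

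For the virtual interpolant I would define $u_I$ element by element by prescribing its degrees of freedom in $V_n^p(P)$ (the vertex values and the scaled edge and internal moments, which are meaningful since $s\ge1$ forces $u|_P\in H^2(P)\hookrightarrow C^0(\overline P)$) to equal those of $u$; since the edge degrees of freedom are shared between neighbouring elements, $u_I\in\Hone$. Writing $|u-u_I|_{1,P}\le|u-u_\pi|_{1,P}+|u_\pi-u_I|_{1,P}$ with $u_\pi$ the polynomial of the first part, the difference $u_\pi-u_I$ lies in $V_n^p(P)$ and its degrees of freedom coincide with those of $u_\pi-u$; a $p$-explicit stability estimate for functions in $V_n^p(P)$ then bounds $|u_\pi-u_I|_{1,P}$ by a $p$-weighted Euclidean norm of those degrees of freedom, which in turn is controlled, via $p$-explicit trace and Poincar\'e--Friedrichs inequalities, by $|u-u_\pi|_{1,P}+h_P^{-1}\|u-u_\pi\|_{0,P}$. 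Inserting the first-part estimate with $\ell=0$ and $\ell=1$, using $h_P\le h$, and summing over $P\in\Tn$ gives $|u-u_I|_1\le C\,h^{\min(p,s)}\,p^{-(s-1)}\big(\sum_{P\in\Tn}\|u\|_{1+s,P}^2\big)^{1/2}$; the exponent $s-1$ rather than $s$ reflects the single power of $p$ lost in the $p$-explicit stability of the degree-of-freedom interpolation.

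The genuine difficulty is entirely in making the constants $p$-explicit: the polynomial approximation result on $\widehat P$ and, above all, the $p$-explicit stability of the interpolation on the virtual space --- which rests on $p$-explicit inverse, trace and Poincar\'e inequalities on polygons --- are the technical core of $hp$-VEM, whereas the $h$-scaling and the reduction to the reference element are routine. I would therefore quote the necessary $p$-explicit inequalities from \cite{BCMR,CGPS,CGMMV} and assemble them as above rather than reprove them.
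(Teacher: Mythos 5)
The paper does not actually prove this proposition: it is a survey statement quoted from \cite{CGMMV} and, ultimately, from \cite{BCMR}, so there is no in-text argument to compare against. Your first estimate is the standard Babu\v ska--Suri scaling argument (reference polygon of unit diameter in a compact shape-regular family, $p$-version polynomial approximation there, scale back, saturate the $h$-power at $p$) and is fine as an outline. The second estimate is where your plan diverges from what the cited literature can deliver. The key lemma you propose to quote --- a $p$-explicit stability bound for the degree-of-freedom interpolation on $V_n^p(P)$, controlling $|u_\pi-u_I|_{1,P}$ by a weighted norm of the DOFs of $u-u_\pi$ with the loss of a single power of $p$ --- is not what \cite{BCMR} or \cite{CGMMV} provide, and it is genuinely delicate: $p$-explicit norm equivalences between the DOF vector and the $H^1$-norm on the virtual space degrade much faster than $p^{-1}$ (the paper itself records $\beta_*(p)\gtrsim p^{-6}$ for a DOF/boundary-based stabilization, which is a symptom of exactly this degradation). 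The construction actually used in \cite{BCMR} sidesteps the issue: one first builds a $p$-explicit continuous piecewise-polynomial approximation of the trace $u|_{\partial P}$ edge by edge (one-dimensional Babu\v ska--Suri), then defines $u_I$ as the virtual function with that boundary datum and matching interior moments, and bounds $|u-u_I|_{1,P}$ through the $H^1$-stability of the local elliptic problem defining the space; the exponent $s-1$ rather than $s$ originates in this trace-interpolation-plus-lifting step, not in a stability bound for the DOF functionals. So your outline has the right architecture and lands on the right exponents, but carried out literally it would stall at the stability lemma; replacing DOF matching by the boundary-interpolation-and-lifting construction is what closes the argument.
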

We point out that the use of high degree $p$ is convenient in the approximation of smooth (piecewise smooth) functions. 

The approximation of the eigenvalue problem~\eqref{eq:Laplace} can be written
introducing the local discrete bilinear forms already used in the
$h$-conforming case~\eqref{eq:ah} and~\eqref{eq:bh1}. Then the discrete
eigenvalue problem reads:
find $(\lambda_n,u_n)\in\mathbb{R}\times V_n^p$ with $u_n\neq 0$ such that
\begin{equation}
\label{eq:eig-discretep}
a_n(u_n,v_n)=\lambda_n b_n(u_n,v_n)\quad\forall v_n\in V_n^p. 
\end{equation}
As it is standard for eigenvalue problems, we associate the discrete source
problem: given $f\in L^2(\Omega)$ find $u_n\in V_n^p$ such that
\begin{equation}
\label{eq:source-discretep}
a_n(u_n,v_n)=b_n(f,v_n)\quad\forall v_n\in V_n^p. 
\end{equation}
In order to take advantage of the approximation properties listed in
Proposition~\ref{prop:approxp}, we define the resolvent operator $T$ on 
$\Hu$ in the spirit of Remark~\ref{rm:TV}. Hence for $f\in\Hu$, $Tf=u$ is the
solution to the continuous source problem~\eqref{eq:source}, while in the
discrete case, $T_nf=u_n\in V_n^p$ stands for the solution
to~\eqref{eq:source-discretep}. 

The convergence of eigensolutions derives from the uniform convergence of 
$T_n$ to $T$, which in turn is a consequence of the error estimates for the
source problem. 
\begin{theorem}
\label{th:errorep}
Given $f\in\Hu$, let $u\in\Hone$ and $u_n\in V_n^p$ be the solutions
to~\eqref{eq:source} and~\eqref{eq:source-discretep}, respectively. Assume that
the restrictions of $f$ and $u$ to every element $P\in\Tn$ belong to 
$H^{1+s}(P)$ for some $s\ge 0$, then there exists a constant $C$ independent of
$h$ and $p$ such that 
$$
|u-u_n|_1\le C \frac{h^{\min(p,s)}}{p^{s-1}}
\left (h^2 ||f||_{1+s,n} + ||u||_{1+s,n} \right ).
$$
\end{theorem}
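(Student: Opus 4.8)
The plan is to follow the same Strang-type argument used in the $h$-conforming case, namely the one behind estimate~\eqref{eq:error}, but now keeping explicit track of the dependence on the polynomial degree $p$ and invoking Proposition~\ref{prop:approxp} in place of Proposition~\ref{prop:approx}. First I would write the classical VEM error bound: since $V_n^p\subset\Hone$ is conforming, we have
\begin{equation*}
|u-u_n|_1\le C\Big(|u-u_I|_1+|u-u_\pi|_{1,n}
+\sup_{v_n\in V_n^p}\frac{|b(f,v_n)-b_n(f,v_n)|}{|v_n|_1}\Big),
\end{equation*}
with $C$ depending only on the stability constants $a_*,a^*$ in~\eqref{eq:staba} (hence independent of $h$ and $p$). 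This is exactly the $hp$ analogue of~\eqref{eq:error}, and its proof is the same: add and subtract $a_n(u_I,v_n)$ and $a^P(u_\pi,v_n)$ elementwise, use consistency~\eqref{eq:consa}, stability~\eqref{eq:staba}, and the fact that polynomials lie in $V_n^p(P)$.

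Next I would bound the three terms separately. The interpolation term $|u-u_I|_1$ and the projection term $|u-u_\pi|_{1,n}$ are handled directly by Proposition~\ref{prop:approxp}: both are bounded by $C\,\frac{h^{\min(p,s)}}{p^{s-1}}\,\|u\|_{1+s,n}$, which (modulo the factor $h^2$) already matches the claimed right-hand side through the term $\|u\|_{1+s,n}$. For the consistency term in $b$, the definition~\eqref{eq:bh1}--\eqref{eq:bh0} gives, on each $P$,
\begin{equation*}
b^P(f,v_n)-b_n^P(f,v_n)=b^P(f-\Pio f,\,v_n-\Pio v_n)
-S_b^P((I-\Pio)f,(I-\Pio)v_n),
\end{equation*}
so by Cauchy--Schwarz, the scaling~\eqref{eq:stabb} of $S_b^P$ like $b^P$, and the $L^2$-optimality of $\Pio$, each local contribution is controlled by $C\|(I-\Pio)f\|_{0,P}\,\|(I-\Pio)v_n\|_{0,P}$. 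Using Proposition~\ref{prop:approxp} with $\ell=0$ on $P$ for $f$ (noting $f|_P\in H^{1+s}(P)$) one gets $\|(I-\Pio)f\|_{0,P}\le C\,\frac{h_P^{1+\min(p,s)}}{p^{1+s}}\|f\|_{1+s,P}$, while $\|(I-\Pio)v_n\|_{0,P}\le C h_P|v_n|_{1,P}$; summing over $P$ and dividing by $|v_n|_1$ produces the factor $h^2$ in front of $\|f\|_{1+s,n}$.

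Collecting the three bounds yields
\begin{equation*}
|u-u_n|_1\le C\,\frac{h^{\min(p,s)}}{p^{s-1}}
\Big(h^2\|f\|_{1+s,n}+\|u\|_{1+s,n}\Big),
\end{equation*}
with $C$ independent of $h$ and $p$, as asserted. The main obstacle is the bookkeeping of the $p$-powers: one must make sure that the generic VEM stability constant $C$ in the Strang estimate really is $p$-independent — this rests on the $hp$-explicit choice of the stabilizing form $S_a^P$ scaling like $a^P$ uniformly in $p$ (the content of~\cite{BCMR,CGMMV}) — and that the polynomial inverse/approximation estimates are applied with the correct exponents, in particular distinguishing the $\frac{h^{1+\min(p,s)}}{p^{1+s}}$ decay of the $L^2$-projection error for $f$ from the $\frac{h^{\min(p,s)}}{p^{s-1}}$ decay of the $H^1$-interpolation error for $u$. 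Once those $p$-dependencies are pinned down, the remaining estimates are routine and parallel the conforming case verbatim.
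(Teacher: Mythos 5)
Your argument is exactly the one the paper intends: the paper's proof consists of the single remark that ``it is enough to combine Proposition~\ref{prop:approxp} with standard VEM arguments,'' and your expansion --- the $hp$ analogue of the Strang-type bound~\eqref{eq:error}, Proposition~\ref{prop:approxp} for the interpolation and projection terms, and the $L^2$-projection/Poincar\'e estimates for the consistency term in $b$, which indeed produce the $h^2$ factor in front of $\|f\|_{1+s,n}$ --- is a correct and faithful filling-in of those standard arguments. Your closing caveat about the possible $p$-dependence of the stability constants in~\eqref{eq:staba} and~\eqref{eq:stabb} is also the very point the paper itself defers to~\cite{CGMMV}, so nothing is missing relative to the paper's own treatment.
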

For the proof it is enough to combine Proposition~\ref{prop:approxp} with
standard VEM arguments. In~\cite{CGMMV}, a precise dependence of $C$ on the
ellipticity constant of the form $a$ and the constants in~\eqref{eq:staba},
and~\eqref{eq:stabb} is obtained. In particular, 
it is shown that the constants in~\eqref{eq:staba} and~\eqref{eq:stabb}
might depend on $p$, as we shall see later. 

The result in Theorem~\ref{th:errorep}, together with the arguments
of~\cite[Section 5]{BCMR}, yields the following exponential convergence:
\begin{theorem}
\label{th:exp}
Let $u$ and $u_n$ be as in Theorem~\ref{th:errorep}, and assume that 
$u$ is the restriction on $\Omega$ of an analytic function defined on an extension of $\Omega$. Then there exist two positive constants $C$ and  $c$ independent of the discretization parameters such that
$$
|u-u_n|_1\le C \mathrm{exp}(-cp).
$$
\end{theorem}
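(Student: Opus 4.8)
The plan is to derive the exponential bound from the algebraic error estimate of Theorem~\ref{th:errorep} by choosing the Sobolev regularity index $s$ proportional to the polynomial degree $p$ and exploiting the analyticity of $u$ (and, in the same way, of the datum $f$; in the eigenvalue application $f=\lambda u$, so analyticity of $u$ alone suffices). First I would record the classical consequence of analyticity: since $u$ extends analytically to a neighbourhood of $\overline\Omega$, there are constants $C_u>0$ and $\gamma_u>0$, depending only on $u$ and on the distance from $\Omega$ to the singularities of its analytic extension, such that
\begin{equation*}
\|u\|_{1+s,n}\le C_u\,\gamma_u^{\,s}\,s!\qquad\text{for every integer }s\ge 0,
\end{equation*}
and the analogous bound holds for $f$. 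This reduces the statement to a purely quantitative estimate of the right-hand side in Theorem~\ref{th:errorep}.

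Next I would make explicit the dependence on $s$ of the constant $C$ in Theorem~\ref{th:errorep}. As remarked after that theorem, $C$ incorporates the ellipticity constant of $a$ together with the stability constants in~\eqref{eq:staba} and~\eqref{eq:stabb}, and the latter may grow with $p$; the key point, established in~\cite{CGMMV} along the lines of~\cite[Section~5]{BCMR}, is that this growth is at most \emph{algebraic} in $p$ (hence, once $s\sim p$ is chosen below, at most algebraic in $s$). Granting this, choose $s=p$ so that $\min(p,s)=p$: the estimate of Theorem~\ref{th:errorep} together with the analyticity bound gives
\begin{equation*}
|u-u_n|_1\le C(p)\,\frac{h^{p}}{p^{\,p-1}}\,\big(h^2\|f\|_{1+p,n}+\|u\|_{1+p,n}\big)
\le C(p)\,\frac{h^{p}\,p!}{p^{\,p-1}}\,C_u\,(1+h^2)\,\gamma_u^{\,p},
\end{equation*}
with $C(p)$ algebraic in $p$. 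Stirling's formula yields $p!\,/\,p^{\,p-1}\le C\,p^{3/2}e^{-p}$, so the right-hand side is bounded by $C'\,p^{N}\,(h\gamma_u/e)^{p}$ for a fixed exponent $N$; absorbing the algebraic factor into a slightly worse geometric ratio ($p^{N}q^{p}\le C''(q')^{p}$ for any $q<q'<1$) produces $|u-u_n|_1\le C\exp(-cp)$ with $c=-\log q'>0$. If the fixed mesh $\Tn$ happens to be too coarse for $h\gamma_u/e<1$, one instead takes $s=\lfloor\eta p\rfloor$ with $\eta$ small enough (depending only on $h\gamma_u$) or performs one preliminary refinement; the argument is otherwise unchanged.

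Finally I would note that this is exactly the regularity situation relevant for the eigenvalue problem: an eigenfunction with analytic data away from reentrant corners is itself analytic, so applying the above with $f=\lambda u$ and invoking the abstract spectral theory in the spirit of Remark~\ref{rm:TV} (with $T$ acting on $\Hu$) transfers the exponential rate to the approximation of the eigenspaces in the $H^1$-norm. The main obstacle in the whole argument is precisely the bookkeeping of the second step: one must be certain that nothing hidden in the constant of Theorem~\ref{th:errorep} — in particular the $p$-dependence of the VEM stabilizations — grows faster than algebraically in $p$, since an exponential-in-$p$ hidden constant would swamp the gain; controlling this is the content of the analysis borrowed from~\cite{BCMR,CGMMV}.
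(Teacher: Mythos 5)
Your argument is correct and is essentially the one the paper intends: the paper does not write out a proof of Theorem~\ref{th:exp} at all, but simply invokes Theorem~\ref{th:errorep} together with the arguments of~\cite[Section~5]{BCMR}, and your reconstruction (analytic growth bound $\|u\|_{1+s,n}\lesssim \gamma_u^{s}s!$, the choice $s\sim\eta p$, Stirling, and the check that the stability constants in~\eqref{eq:staba}--\eqref{eq:stabb} degrade only algebraically in $p$) is precisely the standard mechanism behind that citation. The only point worth flagging is that you correctly identified the one place where the argument could silently fail --- a possibly super-algebraic $p$-dependence hidden in the constant of Theorem~\ref{th:errorep} --- and the paper's own remark that $\beta_*(p)\gtrsim p^{-6}$ confirms your assumption there.
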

Theorem~\ref{th:errorep} implies the uniform convergence of
$||T-T_n||_{\mathcal{L}(\Hu,\Hu)}$ to zero, more precisely one can show that
under the same assumptions as in Theorem~\ref{th:exp} it holds true: 
$$
||T-T_n||_{\mathcal{L}(\Hu,\Hu)}\le C\mathrm{exp}(-cp).
$$
Then, applying Theorems~\ref{th:eig-rate} and~\ref{th:delta1}, we obtain 
the spectral convergence both for the error in the approximation of the
eigenvalues, and the gap between the continuous and the discrete eigenspaces,
provided the eigenfunctions are analytic. For the details, we refer
to~\cite{CGMMV}. Figure~\ref{fg:p} shows the convergence of the $p$-version of
VEM in comparison with the $h$-version with fixed $p=1,2,3$ for the test case on
the square $(0,\pi)^2$ with exact values given by~\eqref{eq:square-eig}. 
The domain is partitioned into a family of Voronoi meshes, and for the
$p$-version the coarsest one employed for the $h$ version has been used. 
These results has been obtained using the so called \emph{diagonal} recipe for
the stabilization of $a$ defined as 
\[
\widetilde{S}_a^P(\varphi_i,\varphi_j)=
\max(1,a^P(\Pinabla\varphi_i,\Pinabla\varphi_j))\delta_{ij}
\]
where $\varphi_i$ is the canonical basis in $V_n(P)$ and $\delta_{ij}$ is the
Kroenecker index, and the following choice for the stabilization of $b$
\[
S_b^P(u_n,v_n)=\frac{h_P}{p^2}\int_{\partial P} u_n v_n\,ds,
\]
which satisfies~\eqref{eq:stabb} with $\beta_*(p)\gtrsim p^{-6}$ and
$\beta^*(p)\lesssim 1$.

Although these values inserted in the estimates reported in
Theorem~\ref{th:errorep} might seem pessimistic, in reality the numerical
results seem to be not affected by the behavior of the stability constants.

\begin{figure}[h]
\centering
\includegraphics[width=0.48\textwidth]{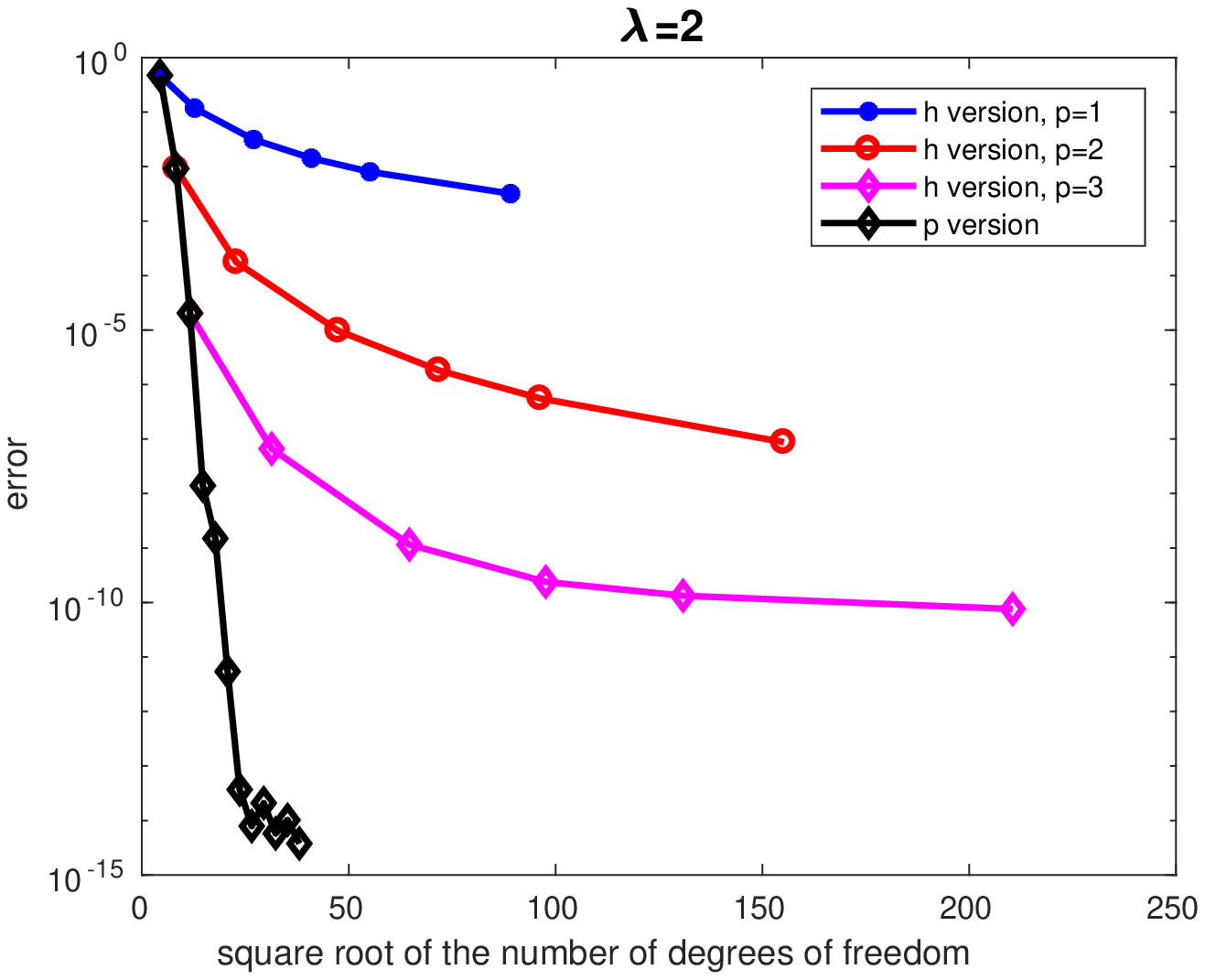}
\includegraphics[width=0.48\textwidth]{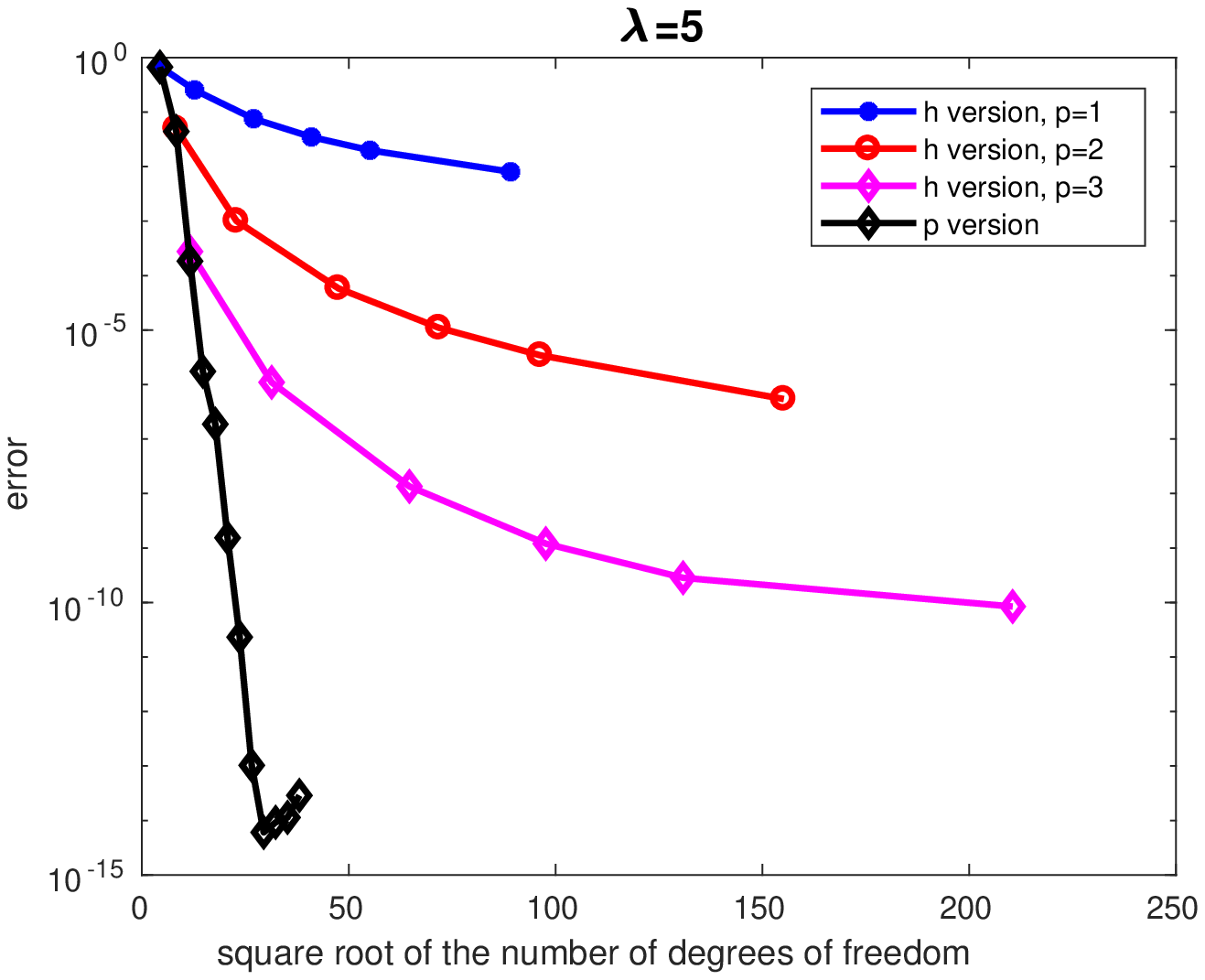}
\includegraphics[width=0.48\textwidth]{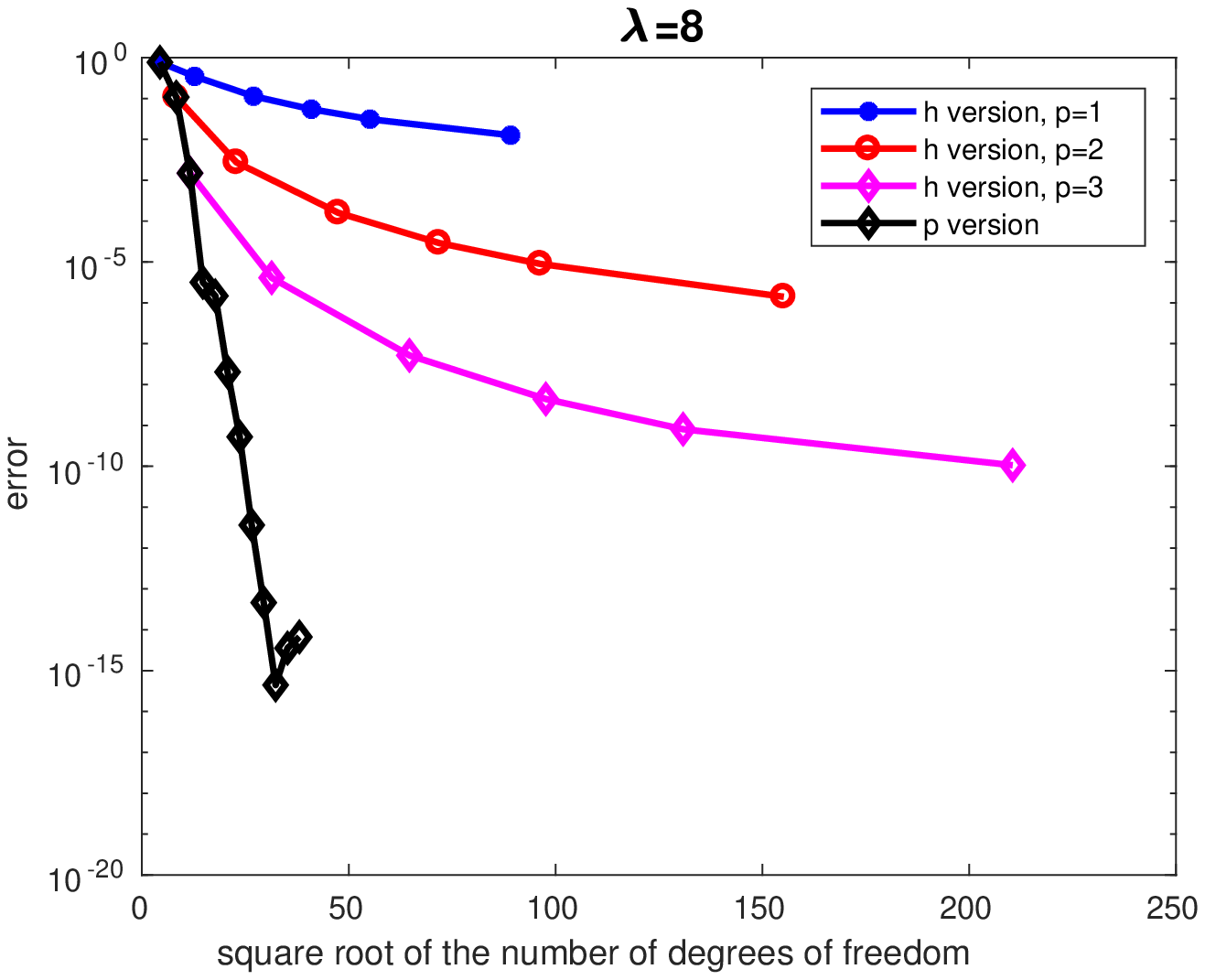}
\includegraphics[width=0.48\textwidth]{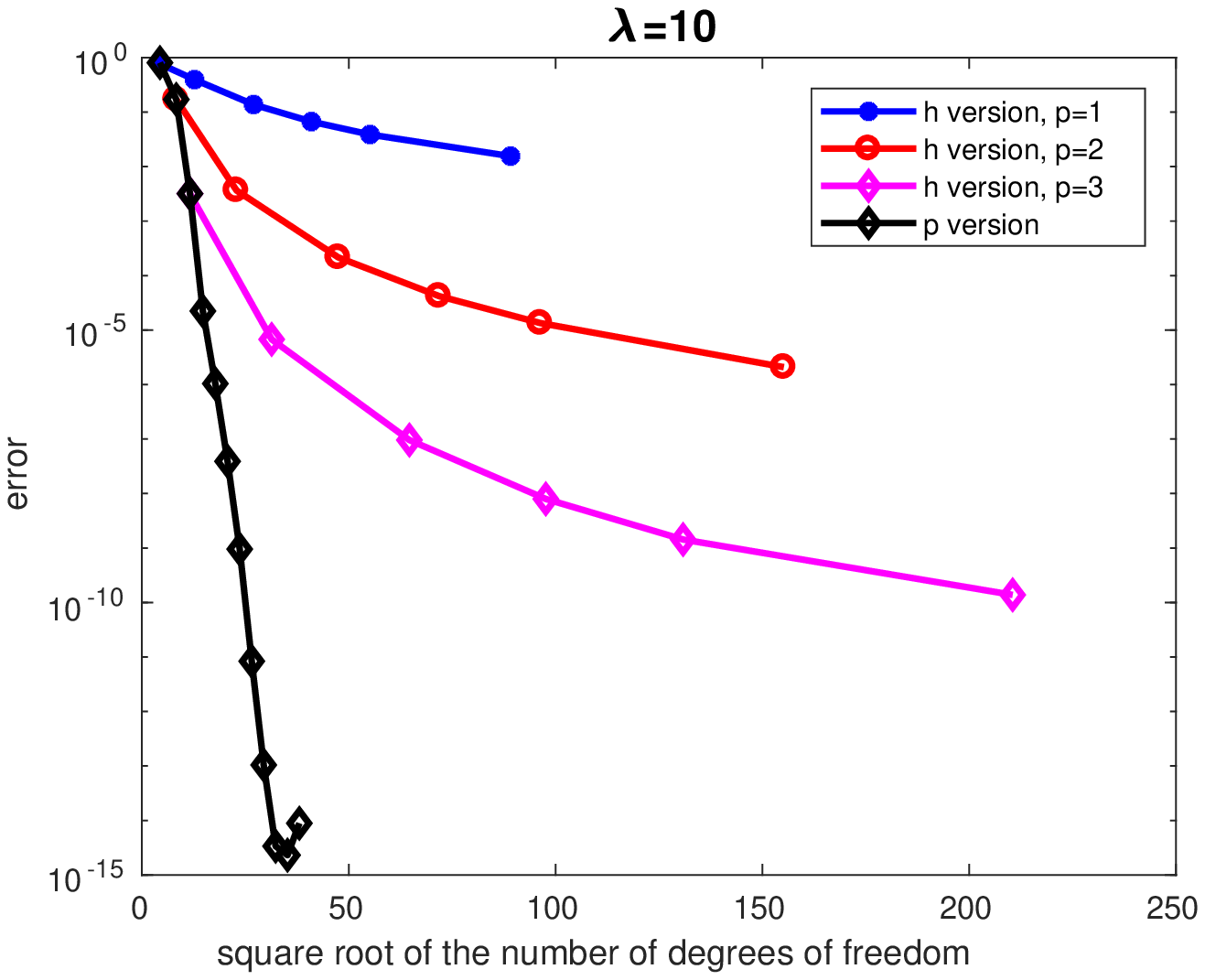}
\caption{Convergence of the error for the first four distinct Dirichlet
eigenvalues of the Laplace operator on the unit square domain.
On the $x$-axis, the square root of the number of degrees
of freedom is reported}
\label{fg:p}
\end{figure}
If the eigenfunctions are not smooth enough, the spectral convergence of $p$
version of VEM cannot be achieved. However, as in the finite element method,
one can resort to the $hp$ approach which combines the $p$ and the $h$ versions
using locally geometrical refined meshes where the solution is singualar.
For the source problem, the analysis
of the exponential convergence of the $hp$ VEM in presence of corner
singularities has been tackled in~\cite{BCMR}. The paper~\cite{CGMMV} merely
investigates numerically the convergence of the $hp$ approximation of the
eigenvalues, showing the exponential decay in terms of the cubic root of the
number of degrees of freedom, in the case of eigenfunctions with finite Sobolev
regularity.
We report in Figure~\ref{fg:hp} the convergence of the error for the first
four distinct
eigenvalues of the Laplace eigenproblem with Neumann boundary condition on the
L-shaped domain obtained in~\cite{CGMMV}.
It is well-known that the first eigenfunction
is singular. In order to recover the spectral convergence the
$hp$-version of VEM has been applied on a graded mesh with a non uniform
distribution of $p$. The domain has been divided into layers around the
reentrant corner and the local polynomial degree of $V_n(P)$ increases as the
layer is farer from the singularity. The results show spectral
convergence with respect to the coobic root of the number of degrees of freedom
for the $hp$-version.
\begin{figure}[h]
\centering
\includegraphics[width=0.48\textwidth]{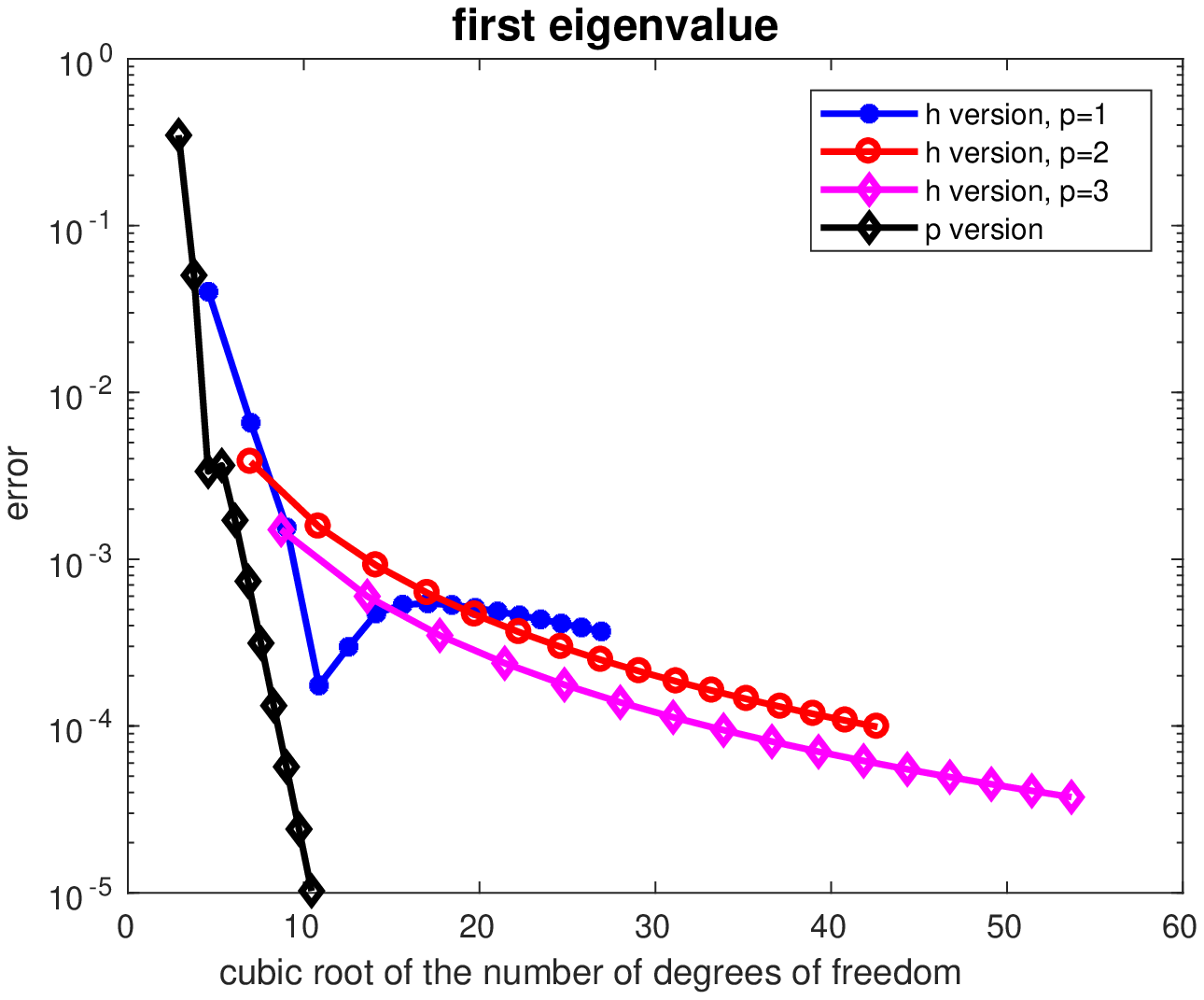}
\includegraphics[width=0.48\textwidth]{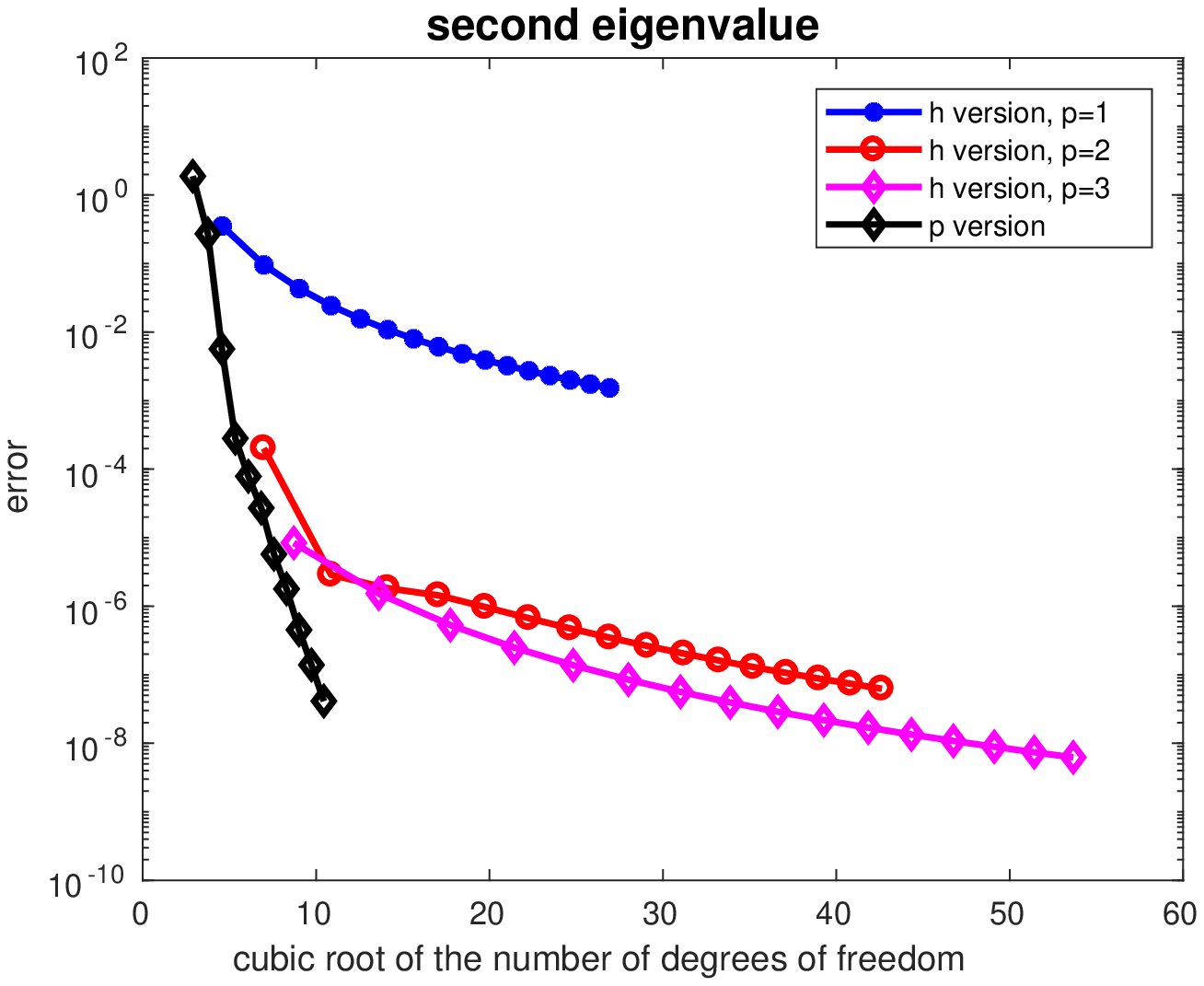}
\includegraphics[width=0.48\textwidth]{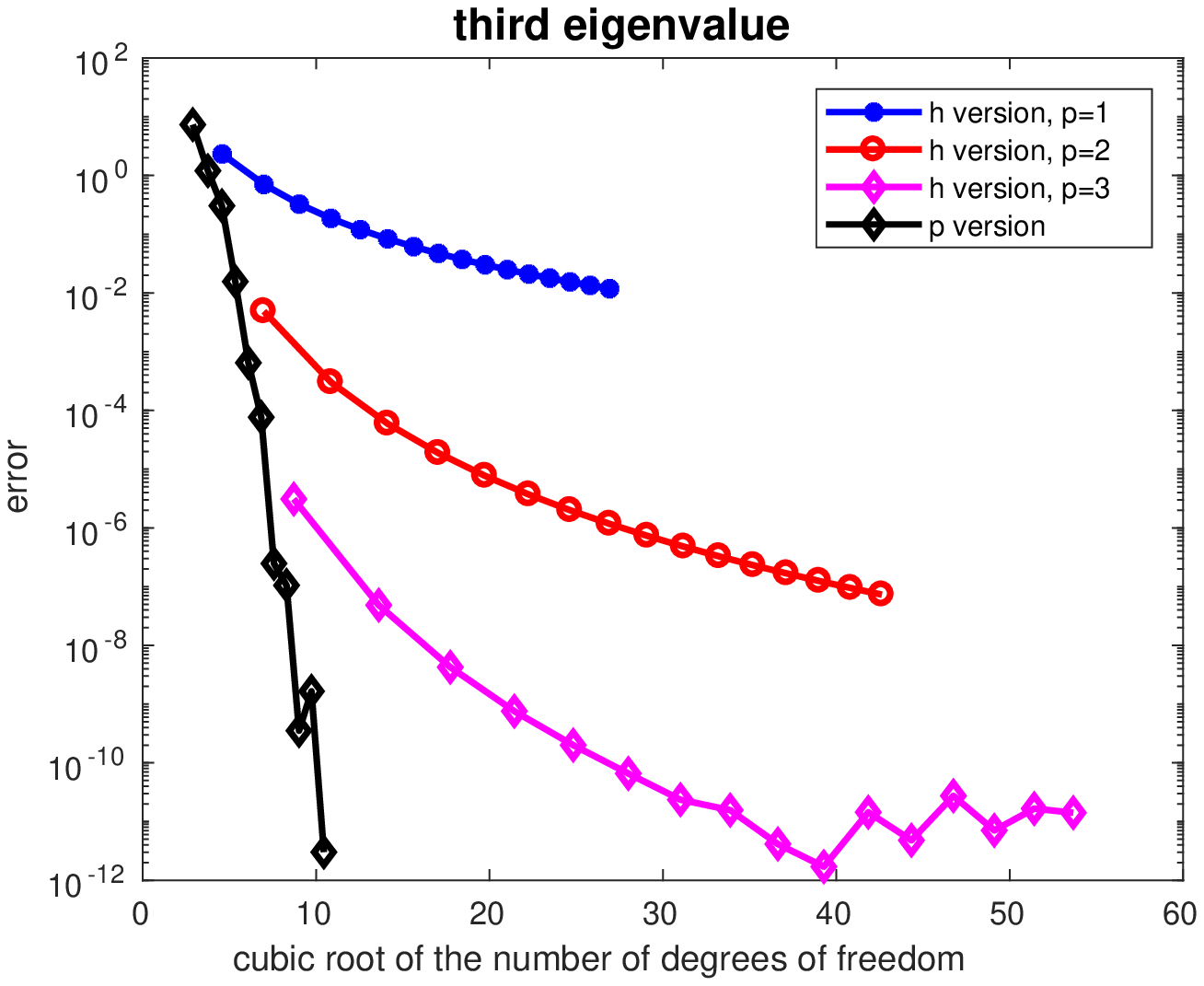}
\includegraphics[width=0.48\textwidth]{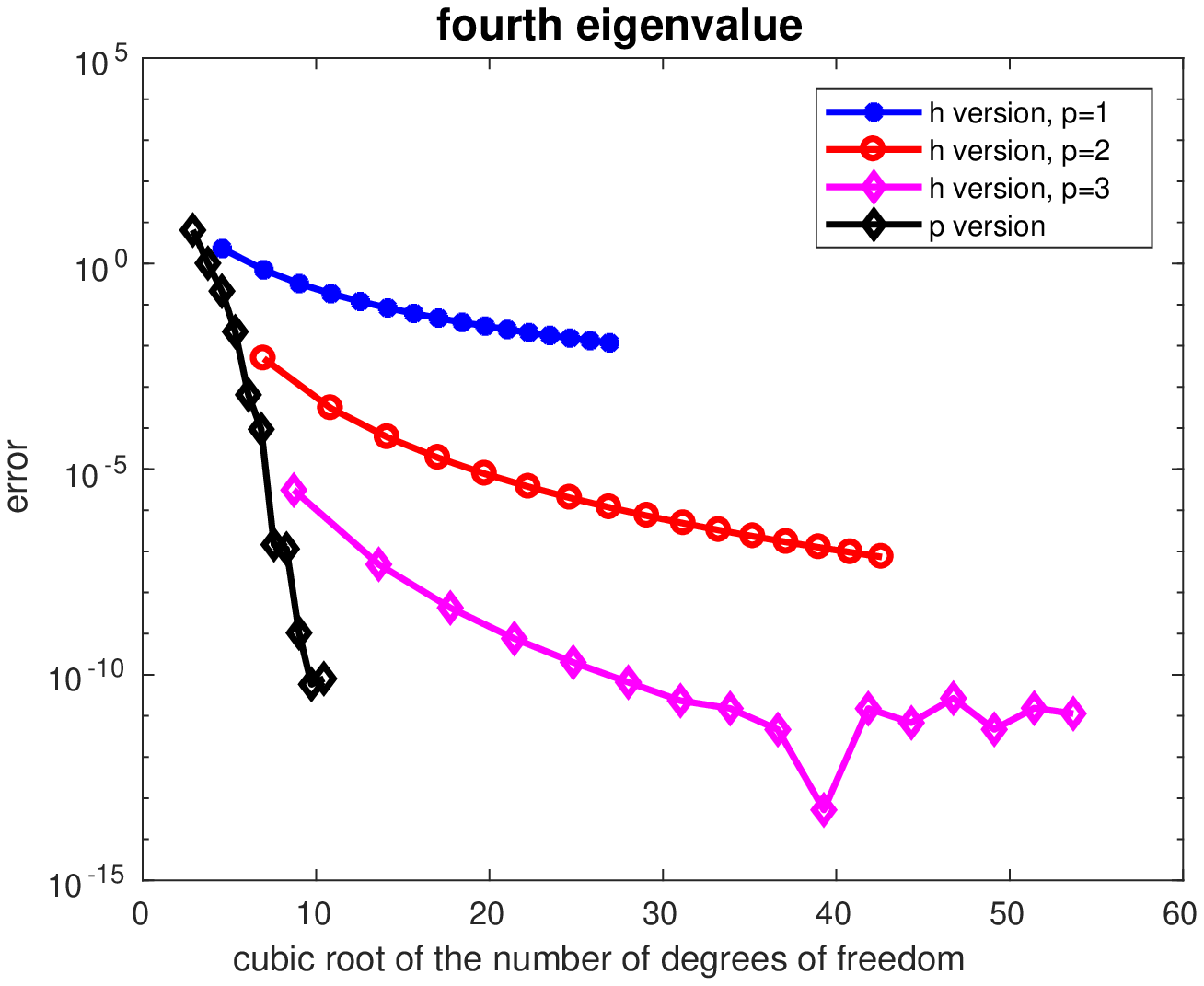}
\caption{Convergence of the error for the first four distinct Neumann
eigenvalues of the Laplace operator on the L-shaped domain. Comparison of the
$hp$-version with the $h$-version with $p=1,2,3$. On the $x$-axis, we report
the cubic root of the number of degrees of freedom}
\label{fg:hp}
\end{figure}
\section{The choice of the stabilization parameters}
\label{se:parameter}

In Section~\ref{se:theory} we have seen that an effective VEM approximation of
eigenvalue problems requires the introduction of appropriate stabilization
parameters. In practice, the discrete problem has the form of the following
algebraic generalized eigenvalue problem
\begin{equation}
\m{A}\m{u}=\lambda\m{M}\m{u}
\label{eq:geneig}
\end{equation}
where both the involved matrices $\m{A}$ and $\m{M}$ depend on the
stabilization  parameters introduced for the VEM discretization.

The results of the previous sections assume that such parameters have been
fixed and show the convergence of the method when the meshsize goes to zero
and/or $p$ goes to $\infty$.
By doing so, it is implicitly understood that the convergence behavior depends
on the choice of the parameters. In this section we want to discuss this
dependence and see how the discrete solution is influenced by such choice. We
would like to make it clear from the very beginning that the optimal strategy
for the choice of the parameters is still the object of ongoing research and
that the aim of our presentation is more related to the description of the
phenomenon than to an ultimate answer to the open questions.
Nevertheless, the \emph{easy and quick recipe} will be that the parameter
related to $\m{A}$ should be large enough, while the parameter related to
$\m{M}$ should be small enough and possibly equal to zero.

The results presented in this section are a condensed version of what is
published in~\cite{auto-stab}. The interested reader will find there a more
general theory and more specific numerical tests.

Although the general picture of VEM approximation of eigenvalue problems is
more complicated, the following simplified setting proves useful in order to
understand its main features.

\subsection{A simplified setting}

We assume that all matrices are symmetric and that
$\m{A}=\m{A_1}+\alpha\m{A_2}$ and $\m{M}=\m{M_1}+\beta\m{M_2}$,
where $\alpha\ge0$ and $\beta\ge0$ play the role of the stabilization
parameters. More precisely, we assume that

\begin{enumerate}[i)]

\item $\m{A_1}$ and $\m{M_1}$ are positive semidefinite;
\item $\m{A_2}$ and $\m{M_2}$ are positive semidefinite and positive definite
on the kernel of $\m{A_1}$ and $\m{M_1}$, respectively;
\item $\m{A_2}$ and $\m{M_2}$ vanish on the orthogonal complement of the
kernel of $\m{A_1}$ and $\m{M_1}$, respectively.

\end{enumerate}

In~\cite{auto-stab} we have studied several examples and described the
spectrum of the generalized eigenvalue problem~\eqref{eq:geneig} as a
function of $\alpha$ and $\beta$. The main features are easily understood with
a simple example when $\alpha$ or $\beta$ are constant.

When $\beta>0$ is constant, then $\m{M}$ is positive definite and the
eigenvalues of~\eqref{eq:geneig} are all positive, possibly vanishing for
$\alpha=0$ (see assumption i) above), and are formed by two families: the
first one is independent on $\alpha$ and corresponds to eigenvectors
$\m{v}$ orthogonal to the kernel of $\m{A}_1$ and to the eigenvalues $\lambda$
that satisfy
\[
\m{A_1}\m{v}=\lambda\m{M}\m{v}
\]
(see assumption iii) above), while the second family contains eigenvectors
$\m{w}$ in the kernel of $\m{A}_1$ and the eigenvalues $\lambda=\alpha\mu$
where $(\mu,\m{w})$ satisfies
\[
\m{A_2}\m{w}=\mu\m{M}\m{w}
\]
(see assumption ii) above). In particular the second family contains
eigenvalues growing linearly with $\alpha$ and includes, for $\alpha=0$, the
eigenvalue $\lambda=0$ with eigenspace equal to the kernel of $\m{A_1}$.

Figure~\ref{fg:case1} gives a graphical indication of the eigenvalues
of~\eqref{eq:geneig} as a function of $\alpha$ when $\beta>0$ is fixed: the
horizontal lines correspond to the eigenvalues of the first family, while the
linearly increasing curves starting at the origin correspond to the
eigenvalues of the second family.

\begin{figure}

\begin{center}
\includegraphics[width=.5\textwidth]{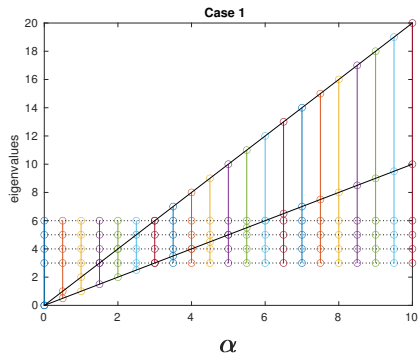}
\end{center}
\caption{Eigenvalues of~\eqref{eq:geneig} for $\beta>0$ fixed.}
\label{fg:case1}
\end{figure}

The second case that is useful to discuss is when $\alpha>0$ is fixed and
$\beta$ is varying. This case is analogous to the previous one with the roles
of $\m{A}$ and $\m{M}$ exchanged. This means that $\m{A}$ is positive
definite and the behavior of the solutions $(\omega,\m{u})$ to the following
eigenvalue problem
\[
\m{M}\m{u}=\omega\m{A}\m{u}
\]
is analogous to the one depicted in Figure~\ref{fg:case1} with $\alpha$
replaced by $\beta$. Since the original eigenmodes of
problem~\eqref{eq:geneig} are given by $(\lambda,\m{u})=(1/\omega,\m{u})$ for
$\omega\ne0$ and $(\infty,\m{u})$ for $\omega=0$, it turns out that the
eigenvalues in this case are split again into two families: the first one is
independent of $\beta$ and corresponds to eigenvectors $\m{v}$ orthogonal to
the kernel of $\m{M}_1$ and to the eigenvalues $\lambda=1/\omega$ that satisfy
\[
\m{M_1}\m{v}=\omega\m{A}\m{v}
\]
while the second family contains eigenvectors $\m{w}$ in the kernel of
$\m{M}_1$ and eigenvalues $\lambda=1/(\beta\omega)$ where $(\omega,\m{w})$
satisfies
\[
\m{M_2}\m{w}=\omega\m{A}\m{w}.
\]
In particular, the second family contains eigenvalues that behave like
decreasing hyperbolas tending to zero as $\beta$ goes to infinity.
For $\beta=0$ the eigenvalues of the second family degenerate to
$\lambda=\infty$ with eigenspace equal to the kernel of $\m{M}_1$.

Figure~\ref{fg:case2} shows the behavior of the eigensolution
of~\eqref{eq:geneig} for fixed $\alpha>0$ and varying $\beta$: the horizontal
lines correspond to the eigenvalues of the second family while the hyperbolas
represent those of the second one.

\begin{figure}

\begin{center}
\includegraphics[width=.5\textwidth]{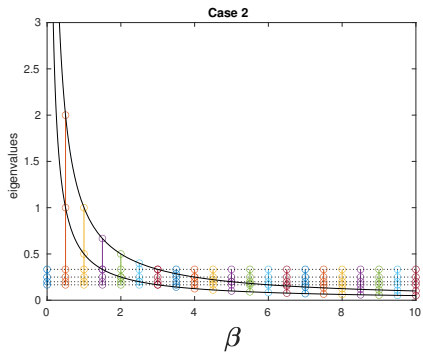}
\end{center}
\caption{Eigenvalues of~\eqref{eq:geneig} for $\alpha>0$ fixed.}
\label{fg:case2}
\end{figure}

\subsection{The role of the VEM stabilization parameters}

The general setting of the problems we have discussed in
Section~\ref{se:theory} does not fit exactly the simplified framework
presented in the previous subsection. In particular, the matrices $\m{A}$ and
$\m{M}$ do not satisfy the assumptions made above. Nevertheless, a similar
splitting into a matrix that is singular due to the presence of the
projection operator and a stabilizing matrix that takes care of the kernel of
the singular part is present both in $\m{A}$ and $\m{M}$. Actually, the
numerical experiments reveal a dependence on the parameters that resembles
pretty much the one presented in Figures~\ref{fg:case1} and~\ref{fg:case2}.

Redirecting the interested reader to~\cite{auto-stab} for more details, we
report here only some results related to the properties of the matrices
$\m{A}$ and $\m{M}$, together with some numerical experiments corresponding to
the simplified situations of Figures~\ref{fg:case1} and~\ref{fg:case2}. All
the presented results are taken from~\cite{auto-stab}.

We consider $\Omega$ as the square of side $\pi$ and introduce a sequence of
five  Voronoi meshes of polygons with increasing number of elements from
$N=50$ to $N=800$.  Table~\ref{tb:kernel} reports the dimension of the kernels
of the two matrices $\m{A_1}$ and $\m{M_1}$ as a function of the meshsize and
of the degree of approximation $k$. As it is known, both matrices are non
singular in the lowest order case for $k=1$; the matrix $\m{M_2}$ is non
singular for $k=2$ as well for the considered meshes.

\begin{table}
\centering
\begin{tabular}{c|c|c|c|c|c}\toprule
$k$ & $N=50$ & $N=100$ &$N=200$ &$N=400$ &$N=800$\\
\midrule
\multicolumn{6}{c}{Kernel of $\m{A}_1$}\\
\midrule
1 & 0 & 0 & 0 & 0 & 0\\
2 & 3 & 30 & 99 & 258 & 565\\
3 & 27 & 94 & 246 & 588 & 1312\\
\midrule
\multicolumn{6}{c}{Kernel of $\m{M}_1$}\\
\midrule
1 & 0 & 0 & 0 & 0 & 0\\
2 & 0 & 0 & 0 & 0 & 0\\
3 & 0 & 1 & 43 & 182 & 504\\
\bottomrule
\end{tabular} 
\caption{Dimension of the kernels of $\m{A}_1$ and $\m{M}_1$ with respect to
the degree $k$ and the number of elements $N$ in the mesh}
\label{tb:kernel}
\end{table}

For completeness, we also computed the lowest eigenvalue of the generalized
problem
\[
\m{A}_1\m{u}=\lambda\m{M}_1\m{u}
\]
that is reported in Table~\ref{tb:infsup}.
\begin{table}
\centering
\begin{tabular}{c|c|c|c|c}\toprule
$N=50$ & $N=100$ &$N=200$ &$N=400$ &$N=800$\\
\midrule
1.92654e+00 & 1.74193e+00 & 1.06691e+00 & 6.81927e-01 & 5.54346e-01\\
\bottomrule
\end{tabular}
\caption{First eigenvalues of $\m{A}_1\m{x}=\lambda\m{B}_1\m{x}$ for different
meshes}
\label{tb:infsup}
\end{table}
The fact that the value of the eigenvalue is decreasing as the number of
elements increases, confirms the need for a stabilization.

In the rest of this section we show and comment some results of our
computations of the eigenvalues of the Dirichlet problem for the Laplace
operator in the case of the mesh $N=200$.

Figure~\ref{fg:6(h)} reports the computed eigenvalues in the range $[0,40]$,
for $\beta=1$ and $\alpha\in[0,10]$ with elements of degree $k=3$.
\begin{figure}

\begin{center}
\includegraphics[width=.5\textwidth]{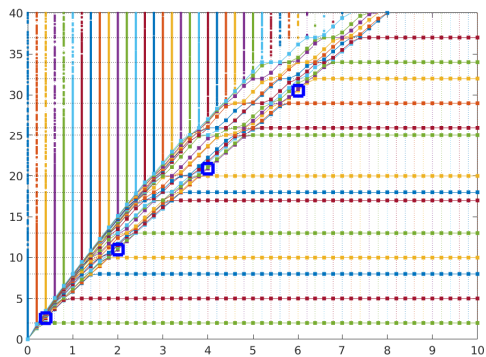}
\end{center}
\caption{Eigenvalues computed for $\beta=1$ and $\alpha\in[0,10]$ with $k=3$}
\label{fg:6(h)}
\end{figure}
Comparing these results with
Figure~\ref{fg:case1} it is easy to recognize the horizontal lines
corresponding to the \emph{good} eigenvalues we are interested in. At the same
time it is apparent the presence of some oblique lines that correspond to
\emph{spurious} modes introduced by the stabilization of $\m{A}$.

\begin{figure}

\begin{center}
\includegraphics[width=.45\textwidth]{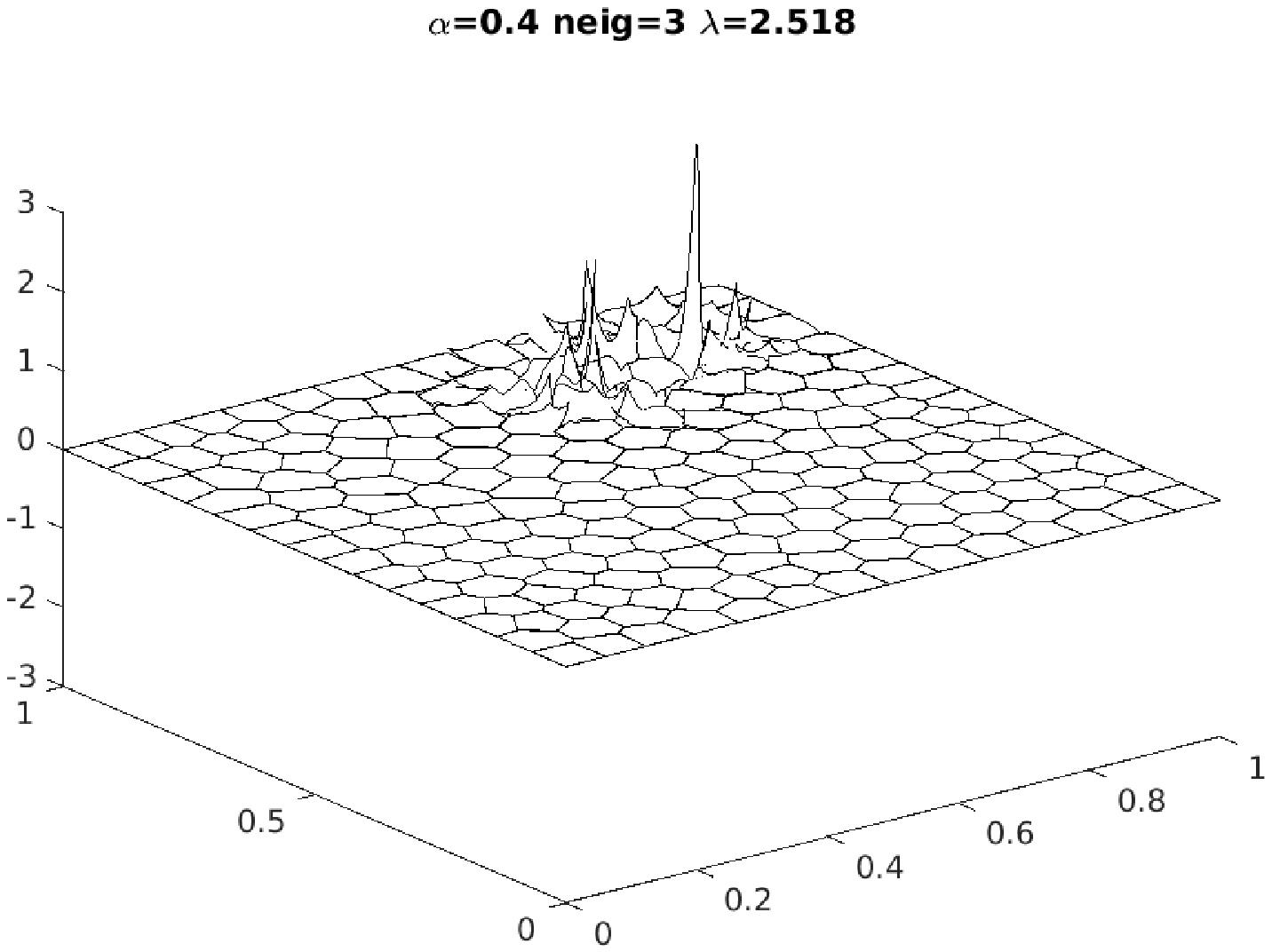}
\includegraphics[width=.45\textwidth]{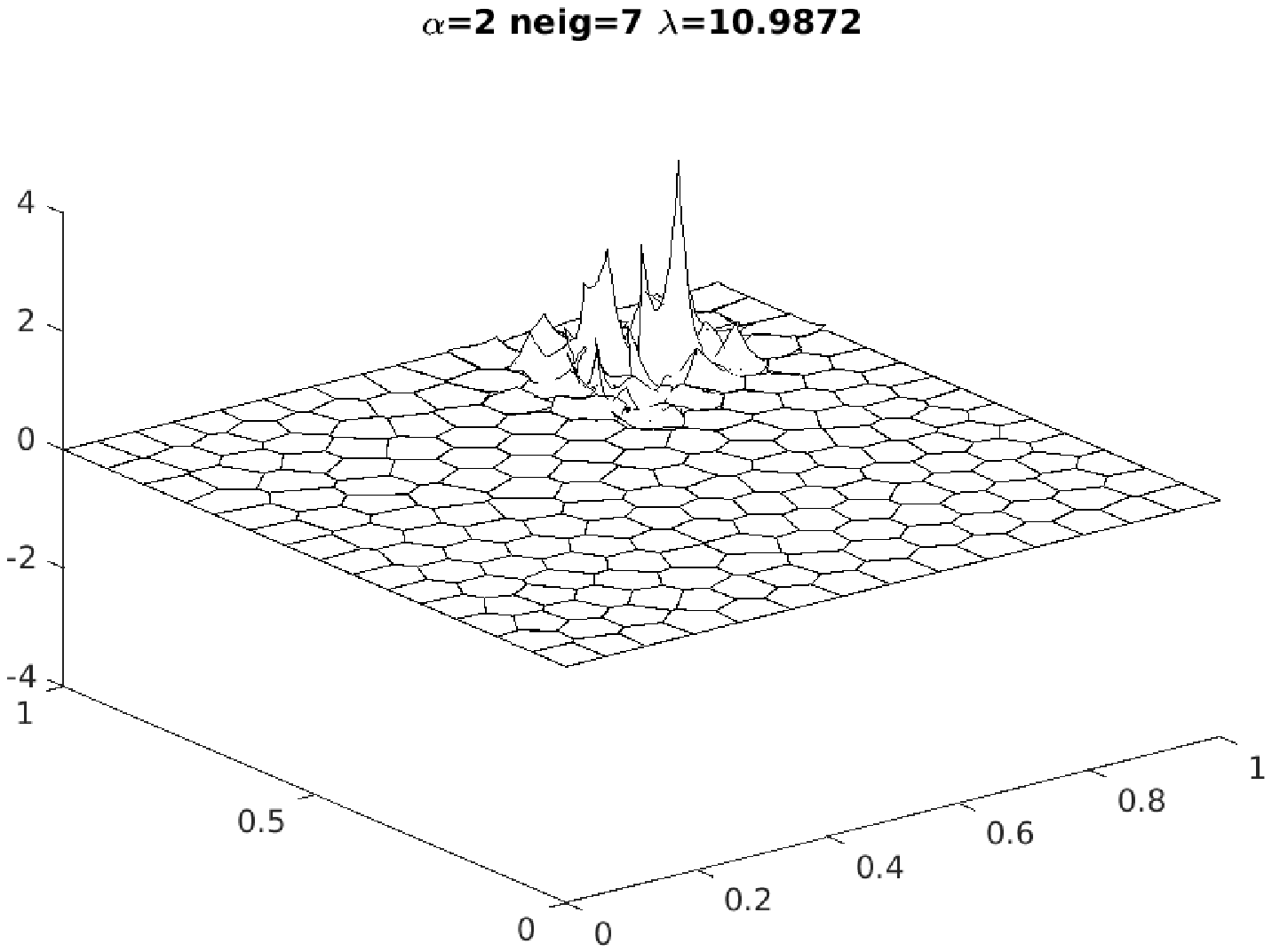}

\includegraphics[width=.45\textwidth]{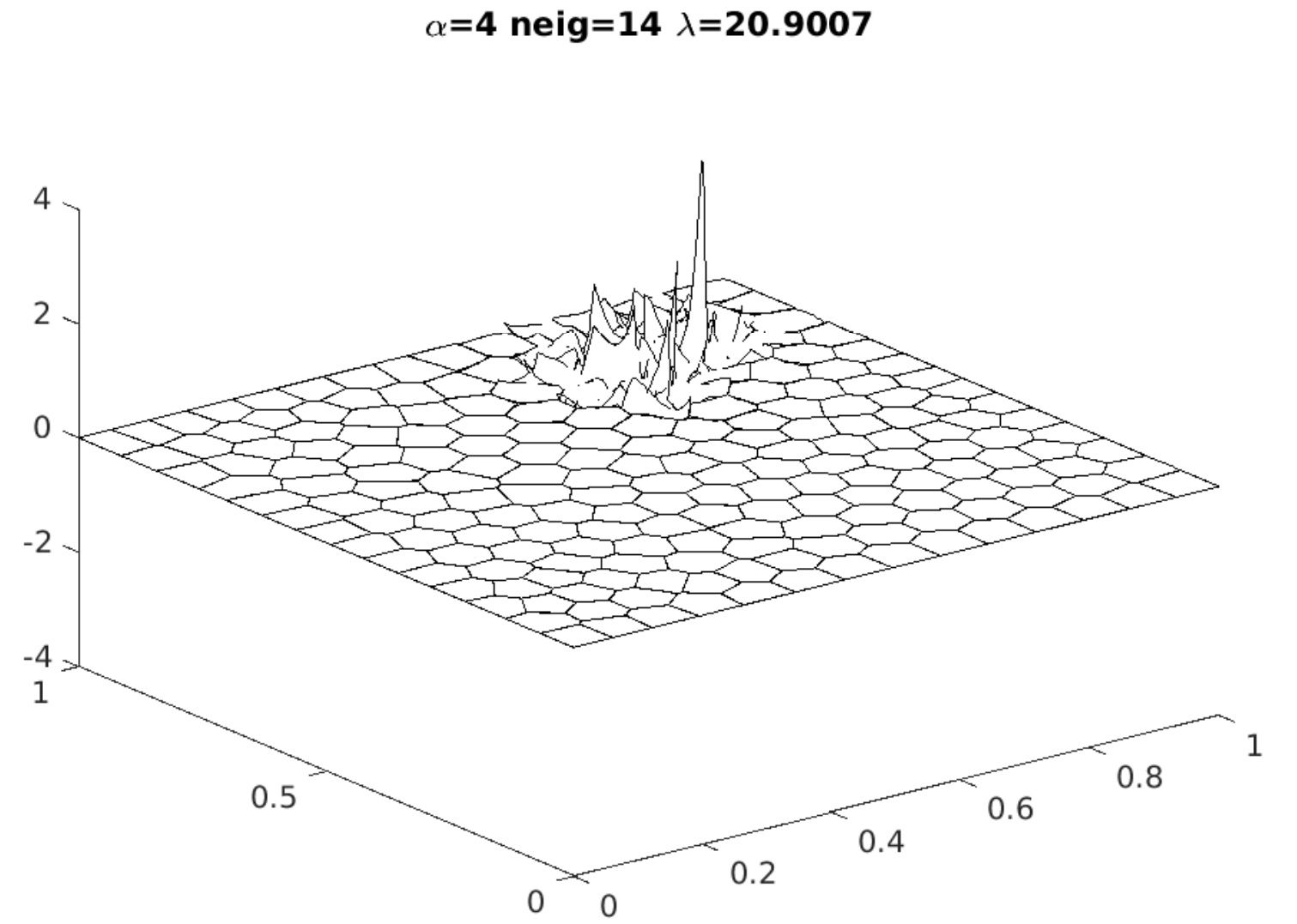}
\includegraphics[width=.45\textwidth]{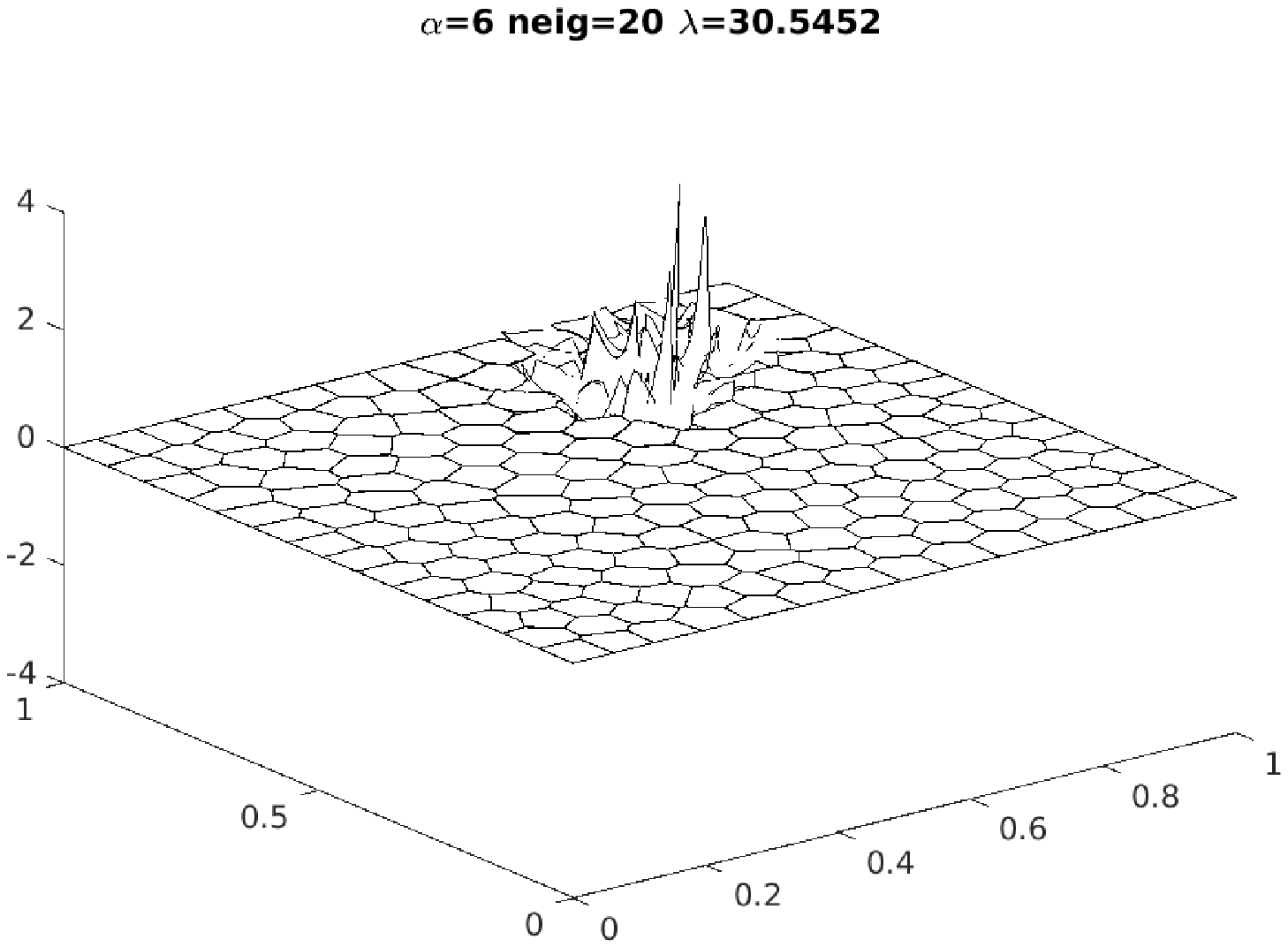}
\end{center}

\caption{Eigenfunctions associated with the marked eigenvalues of
Figure~\ref{fg:6(h)}}
\label{fg:marked}
\end{figure}

Figure~\ref{fg:marked} shows the eigenfunctions corresponding to the marked
values of Figure~\ref{fg:6(h)} that belong to the same oblique line. It can be
seen that the eigenfunctions look similar to each other, which confirms the
analogy with the situation presented in Figure~\ref{fg:case1}.

Several other cases when $\beta\ge0$ is fixed and $\alpha$ is varying are
reported in Figure~\ref{fg:alpha} for different values of $k$.

\begin{figure}

\begin{center}
\subfigure[\tiny{$k=1$, $\beta=0$, $\alpha\in[0,10]$}]
{\includegraphics[width=.32\textwidth]{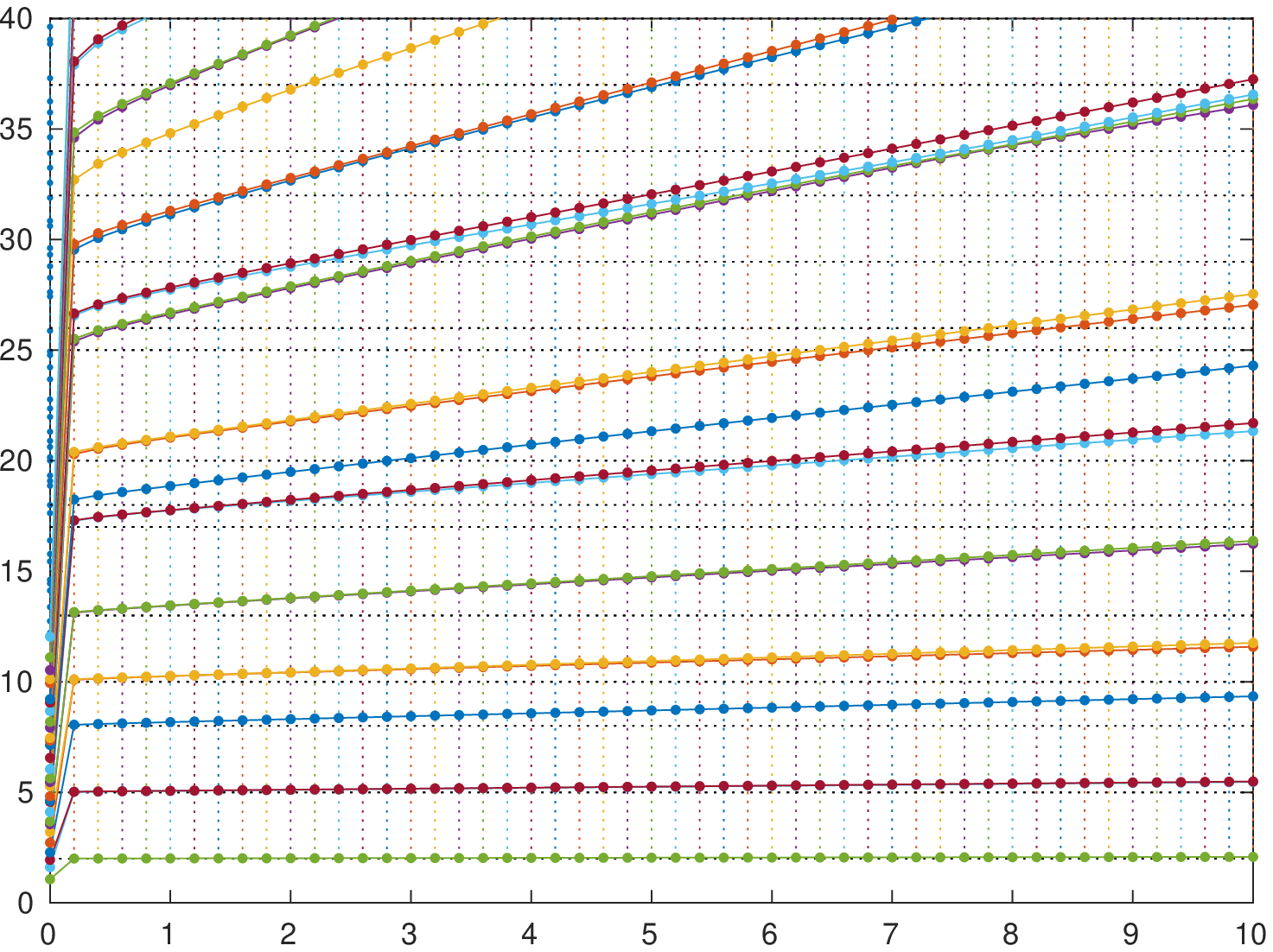}}
\subfigure[\tiny{$k=1$, $\beta=1$, $\alpha\in[0,10]$}]
{\includegraphics[width=.32\textwidth]{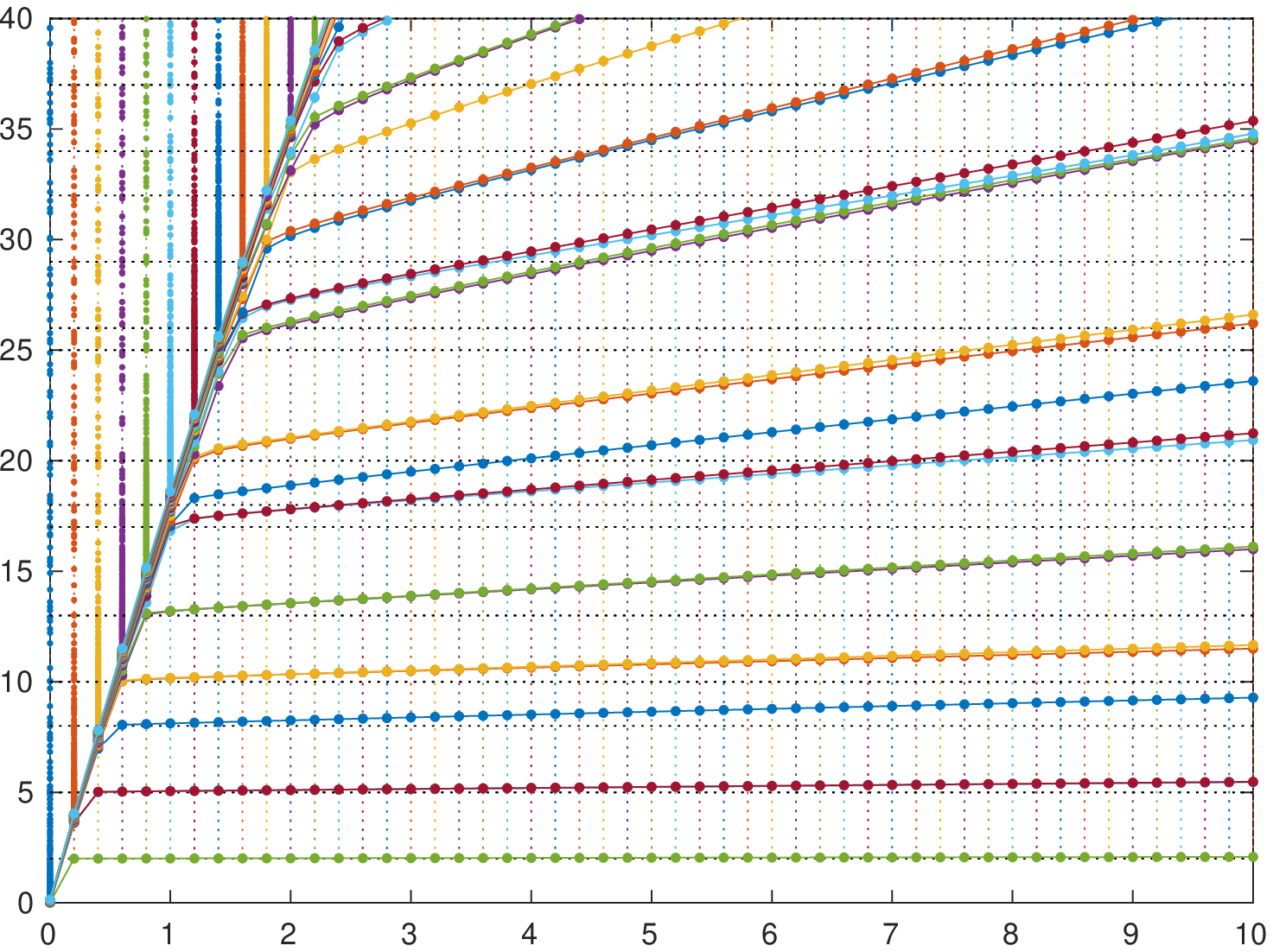}}
\subfigure[\tiny{$k=1$, $\beta=5$, $\alpha\in[0,10]$}]
{\includegraphics[width=.32\textwidth]{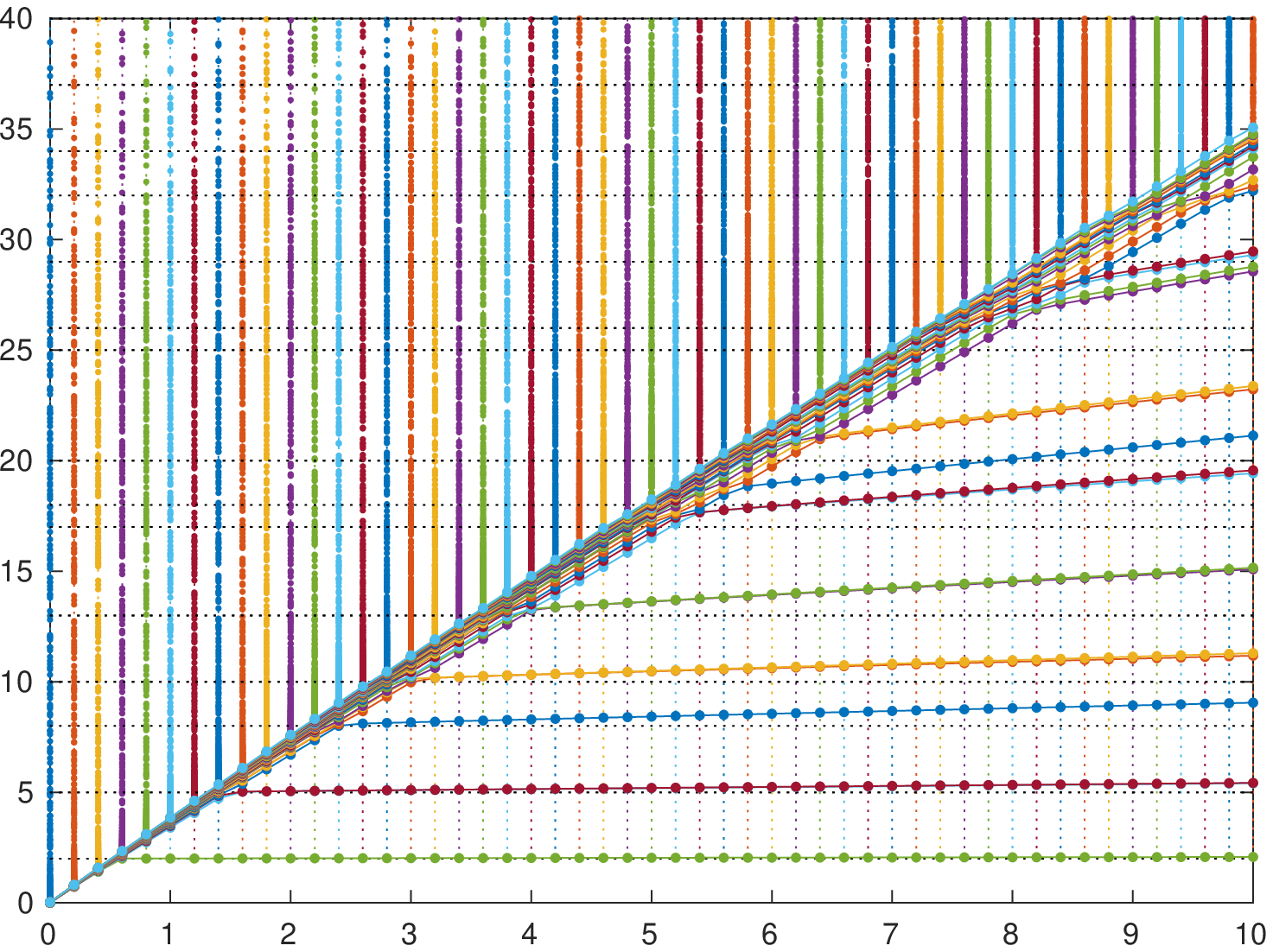}}

\subfigure[\tiny{$k=2$, $\beta=0$, $\alpha=[0,10]$}]
{\includegraphics[width=.32\textwidth]{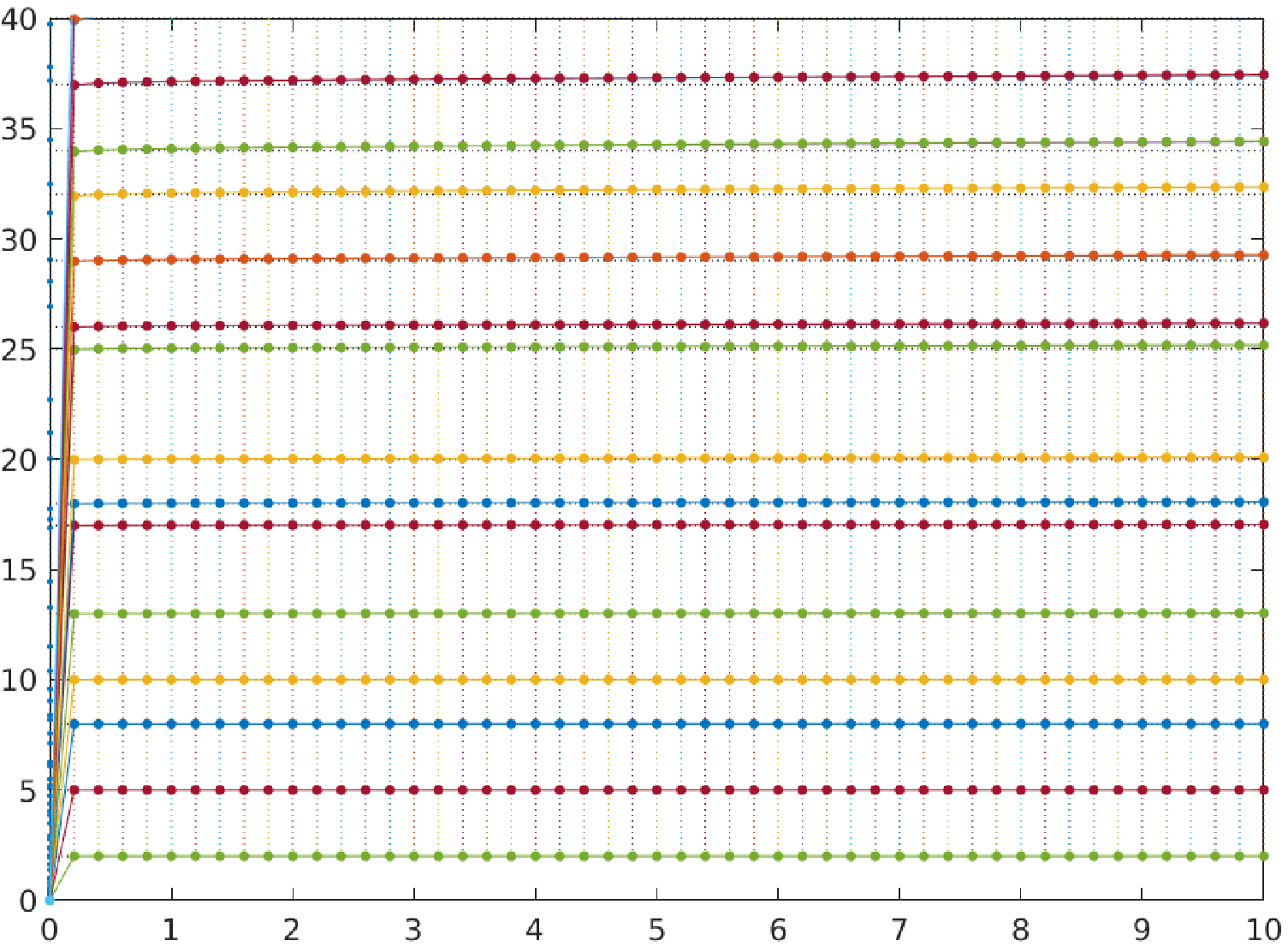}}
\subfigure[\tiny{$k=2$, $\beta=1$, $\alpha=[0,10]$}]
{\includegraphics[width=.32\textwidth]{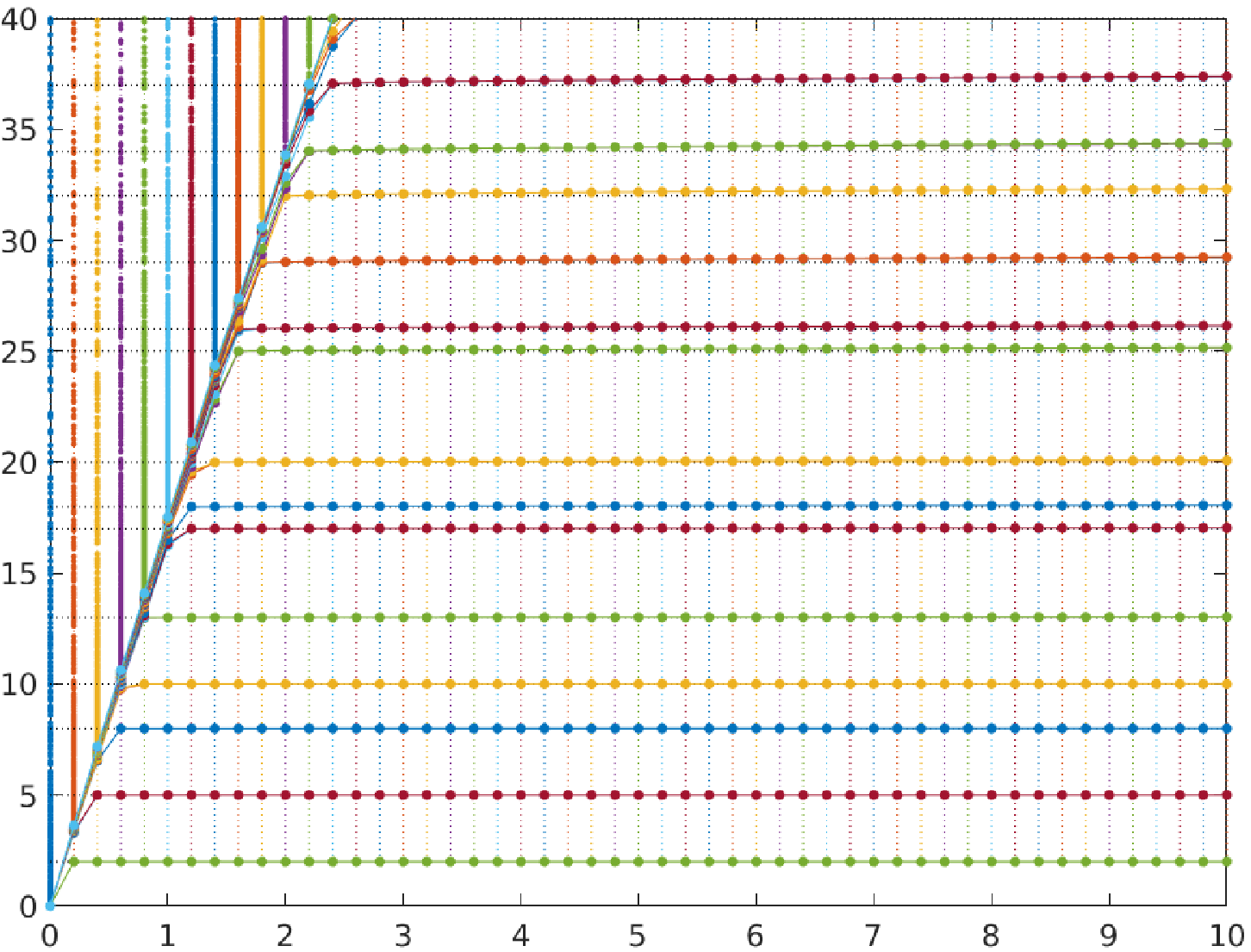}}
\subfigure[\tiny{$k=2$, $\beta=5$, $\alpha=[0,10]$}]
{\includegraphics[width=.32\textwidth]{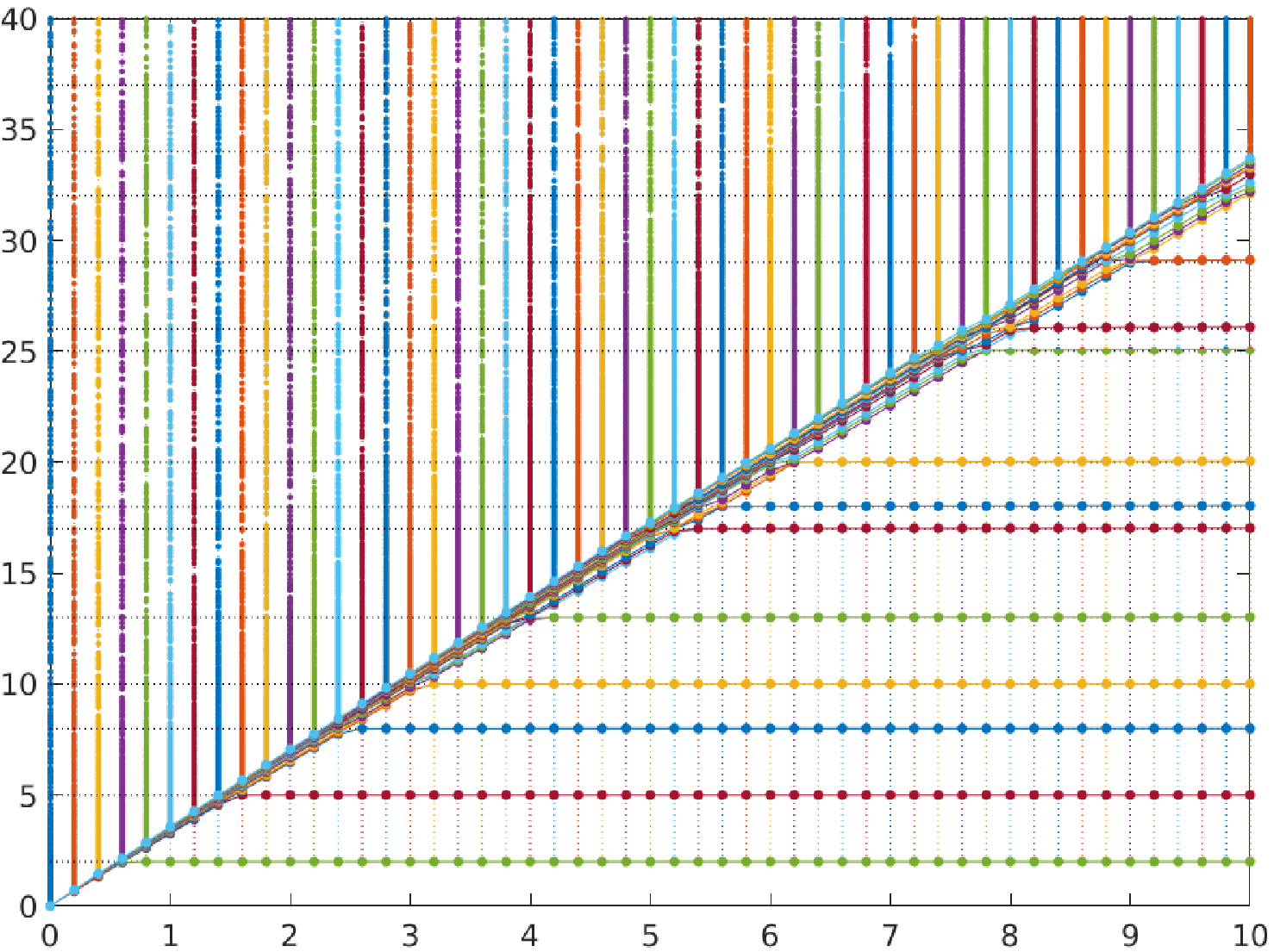}}

\subfigure[\tiny{$k=3$, $\beta=0$, $\alpha=[0,10]$}]
{\includegraphics[width=.32\textwidth]{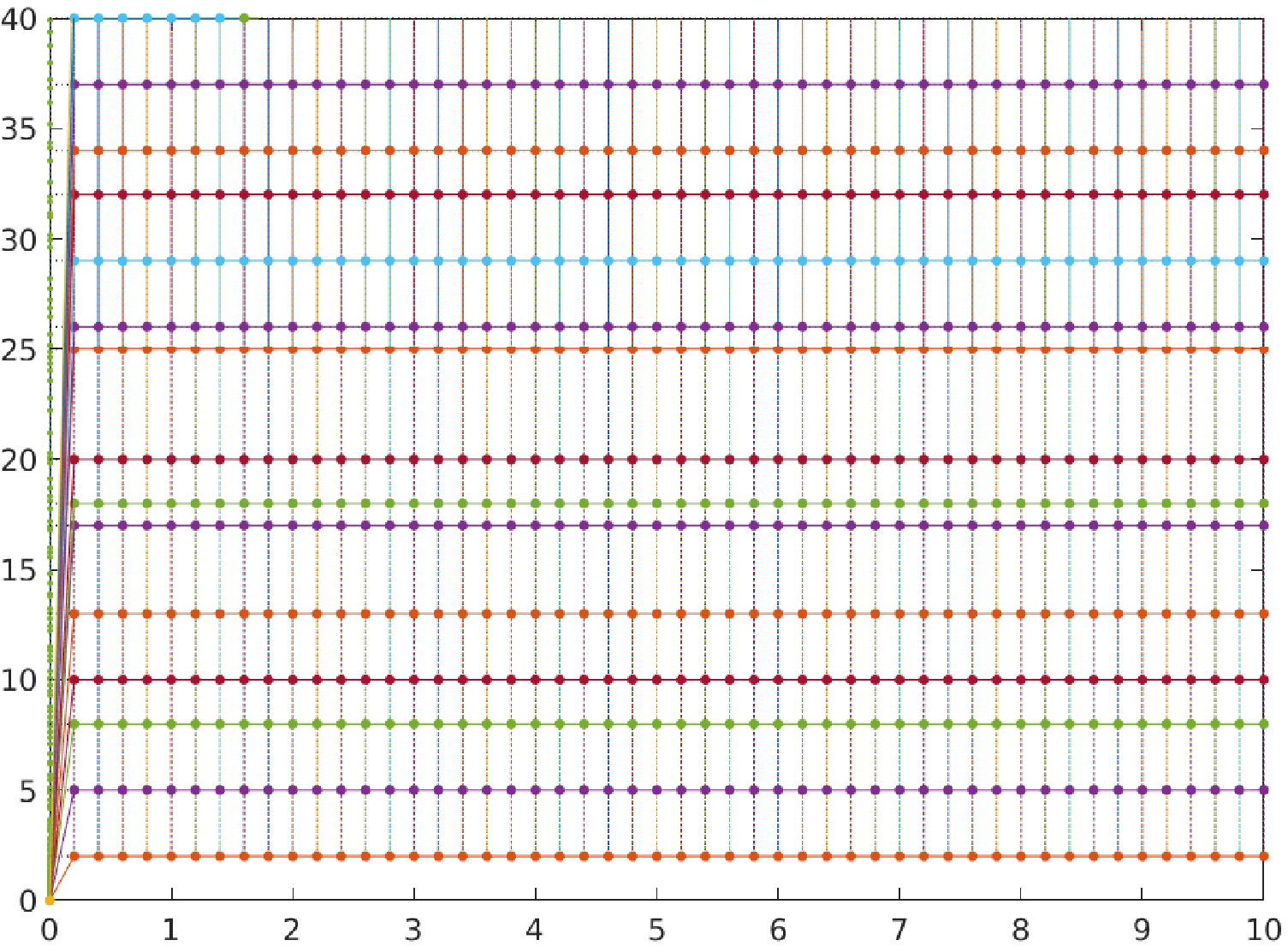}}
\subfigure[\tiny{$k=3$, $\beta=1$, $\alpha=[0,10]$}]
{\includegraphics[width=.32\textwidth]{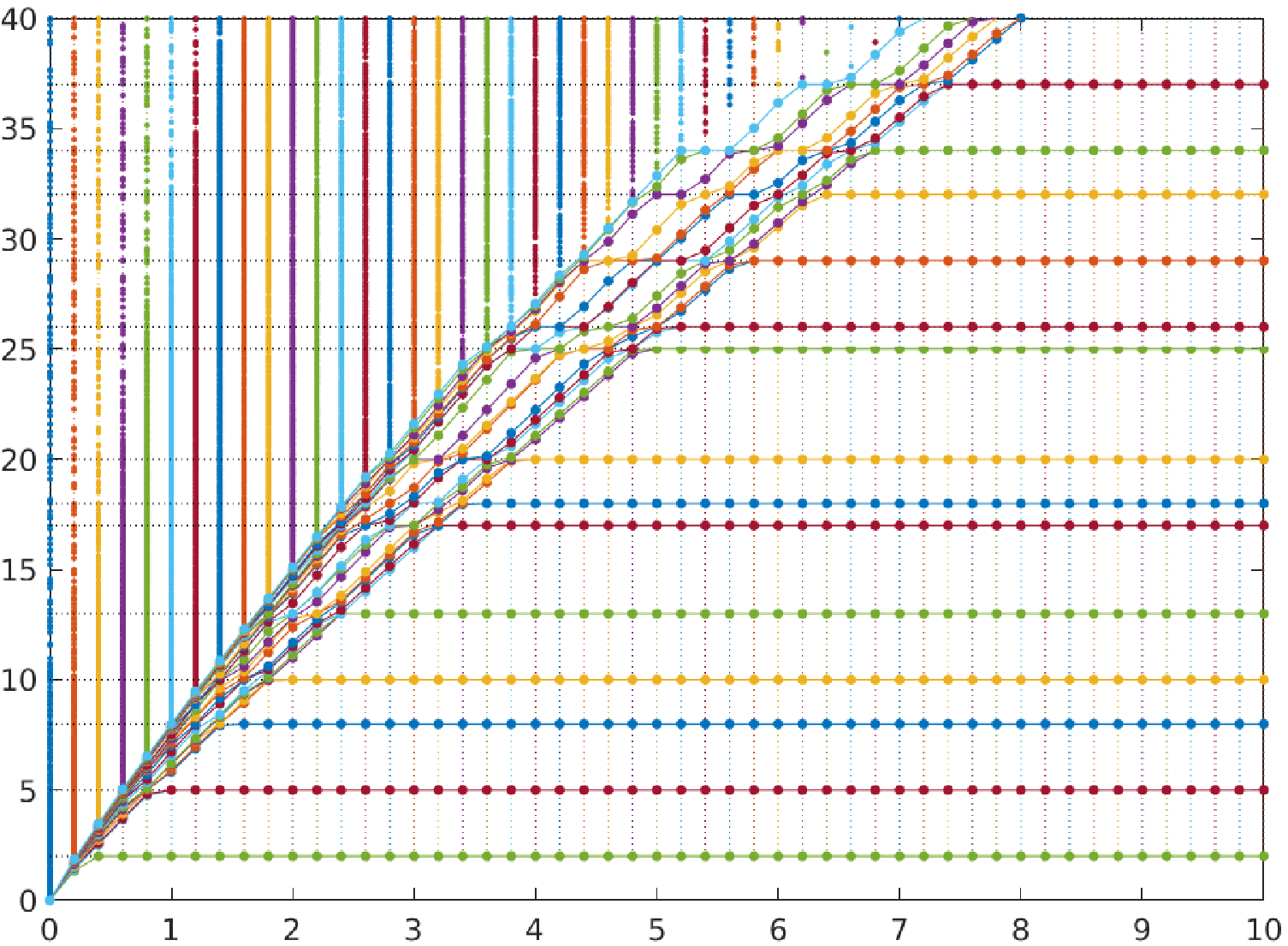}}
\subfigure[\tiny{$k=3$, $\beta=5$, $\alpha=[0,10]$}]
{\includegraphics[width=.32\textwidth]{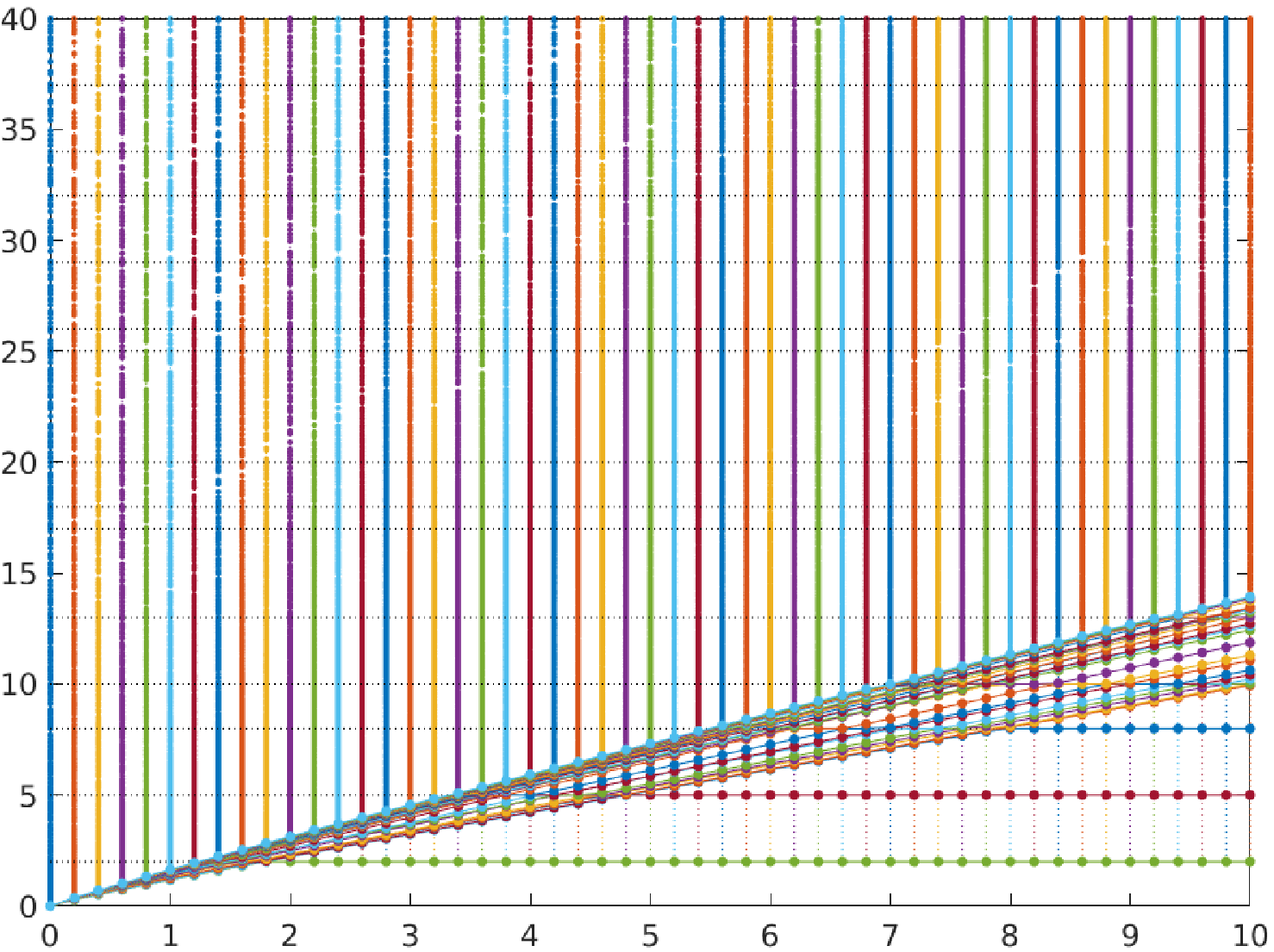}}
\end{center}
\caption{Eigenvalues with fixed $k$ and $\beta\ge0$ and varying
$\alpha\in[0,10]$}
\label{fg:alpha}
\end{figure}

The last computations of this section involve the situation when $\alpha>0$ is
fixed and $\beta$ is varying.
\begin{figure}

\begin{center}
\subfigure[\tiny{$k=1$, $\alpha=0.1$, $\beta\in[0,5]$}]
{\includegraphics[width=.32\textwidth]{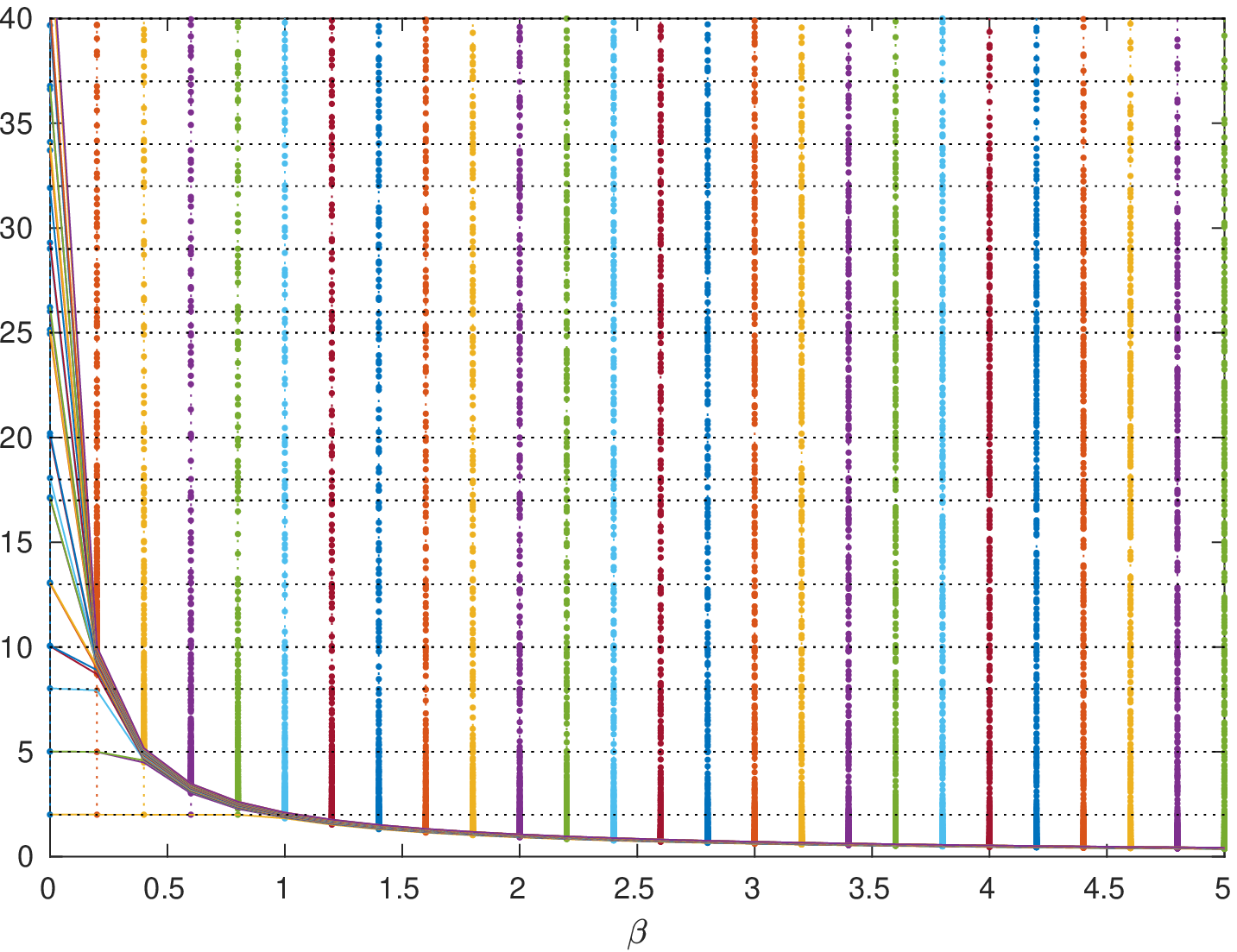}}
\subfigure[\tiny{$k=1$, $\alpha=1$, $\beta\in[0,5]$}]
{\includegraphics[width=.32\textwidth]{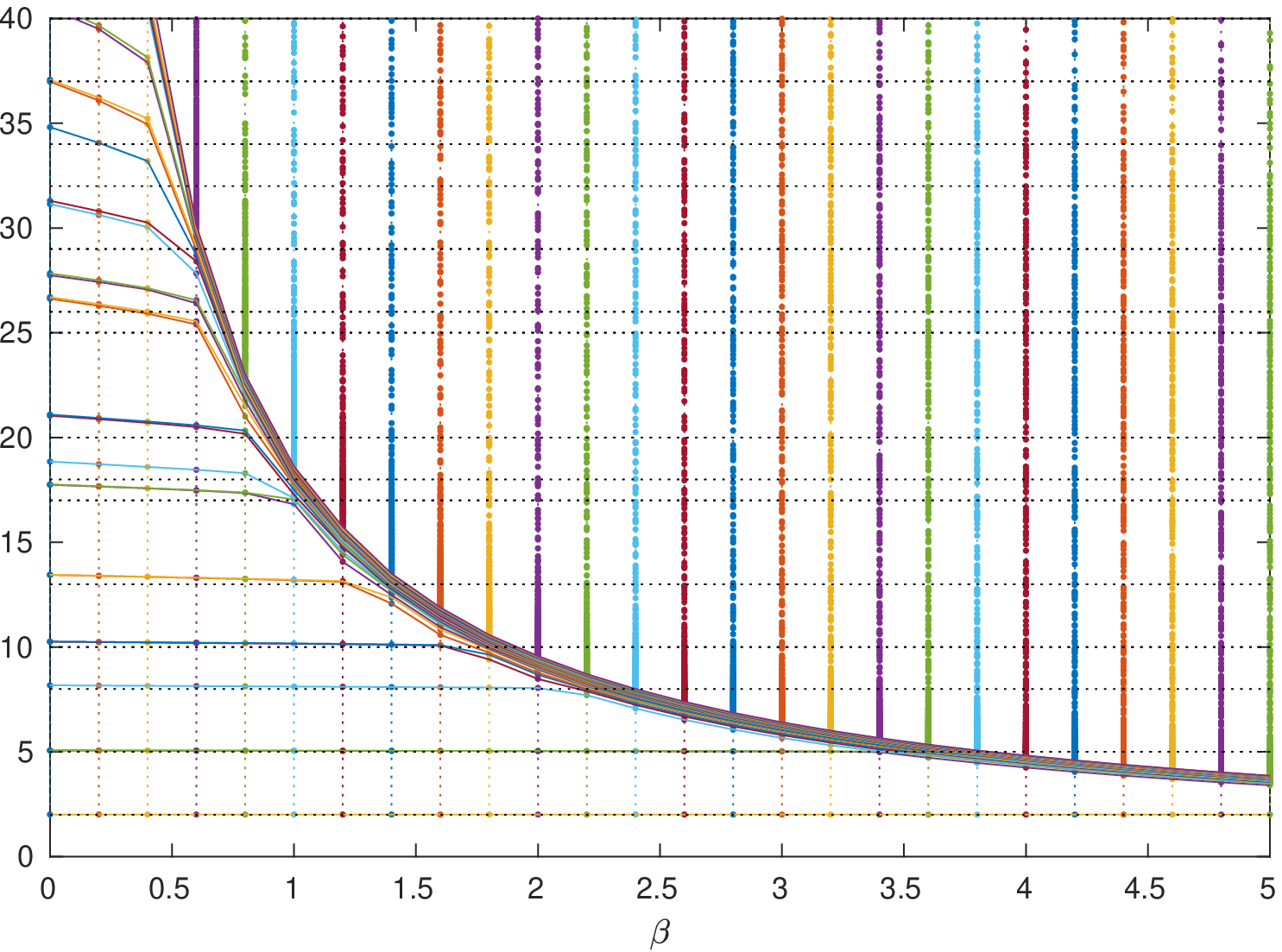}}
\subfigure[\tiny{$k=1$, $\alpha=10$, $\beta\in[0,5]$}]
{\includegraphics[width=.32\textwidth]{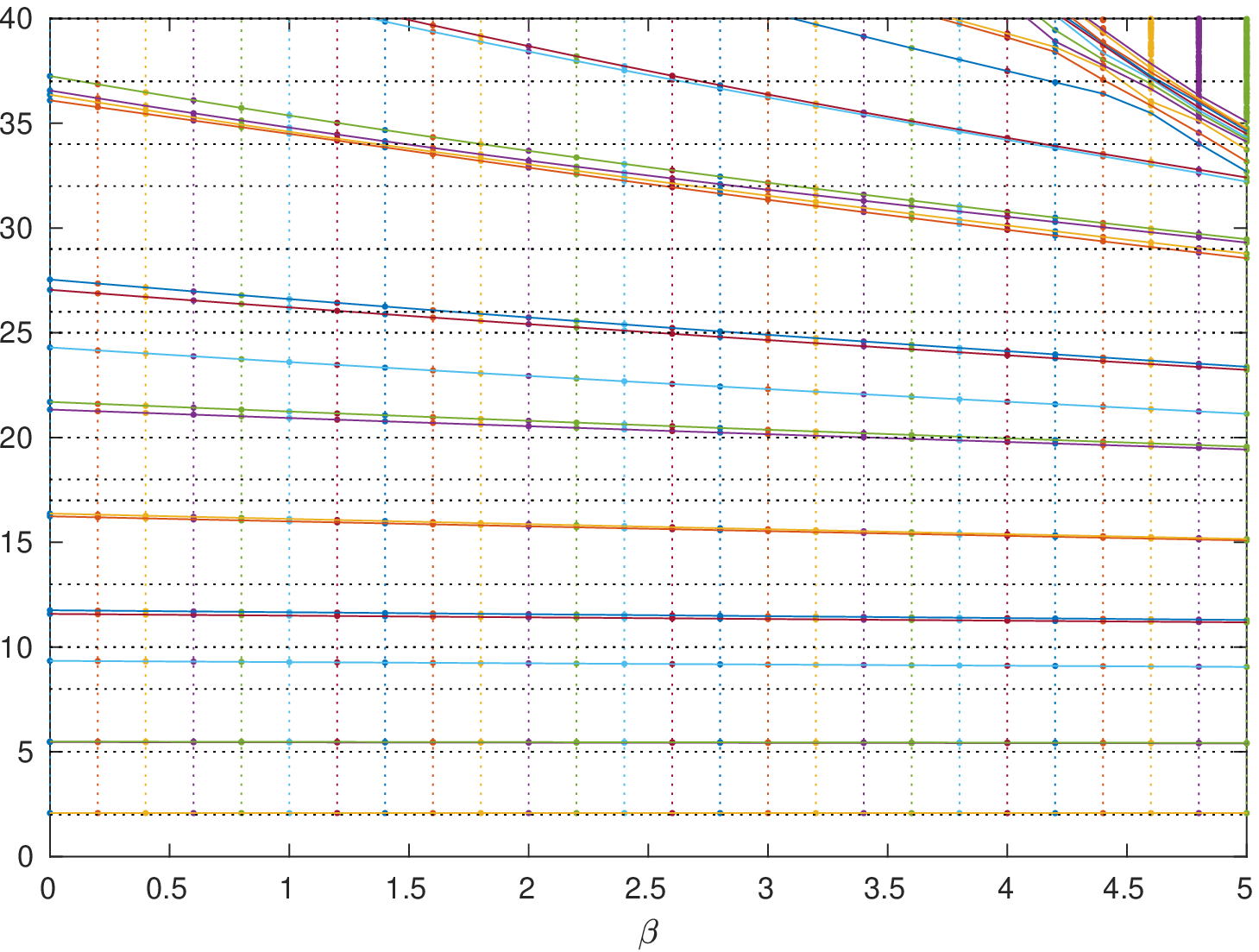}}

\subfigure[\tiny{$k=2$, $\alpha=0.1$, $\beta=[0,5]$}]
{\includegraphics[width=.32\textwidth]{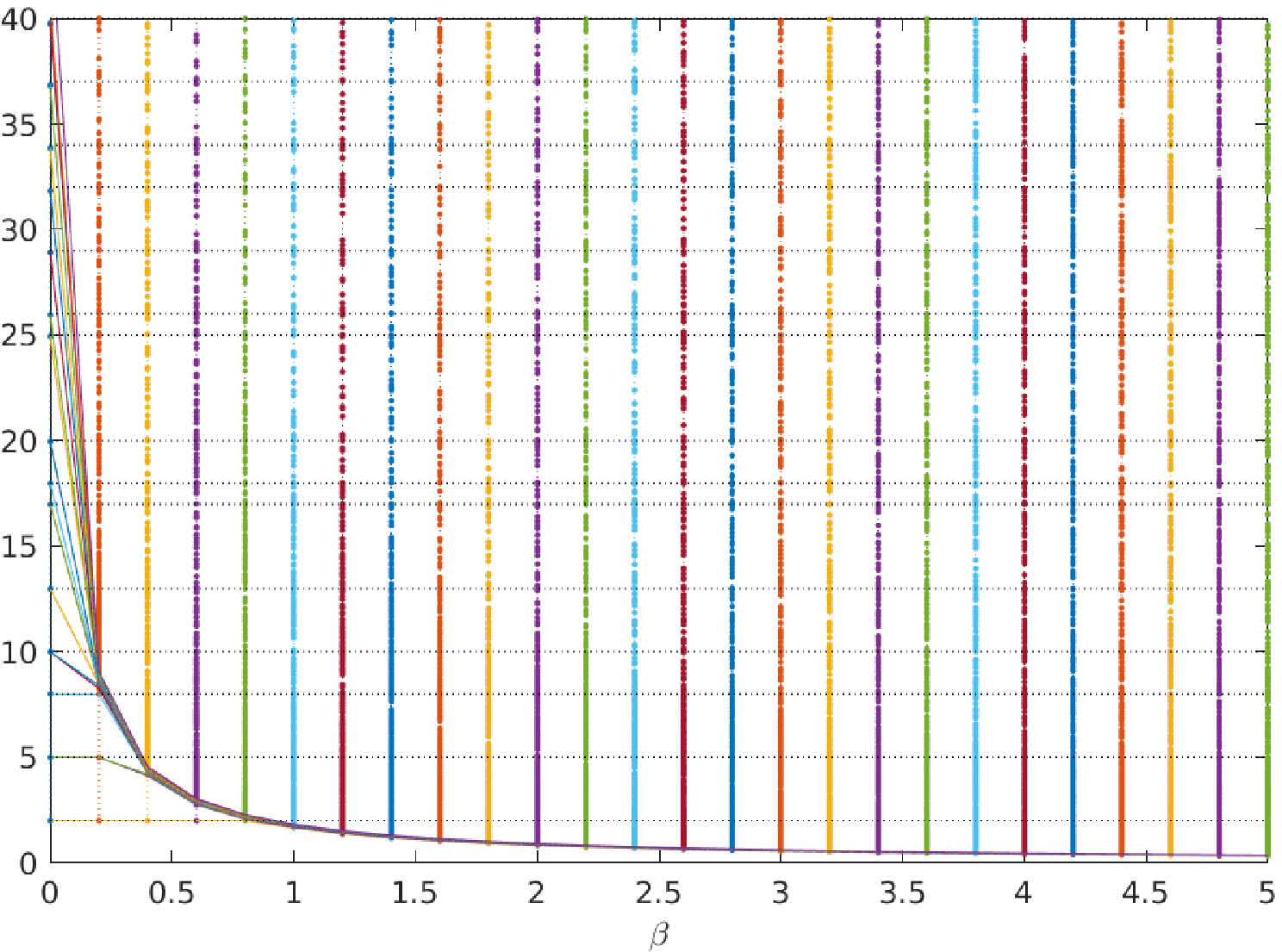}}
\subfigure[\tiny{$k=2$, $\alpha=1$, $\beta=[0,5]$}]
{\includegraphics[width=.32\textwidth]{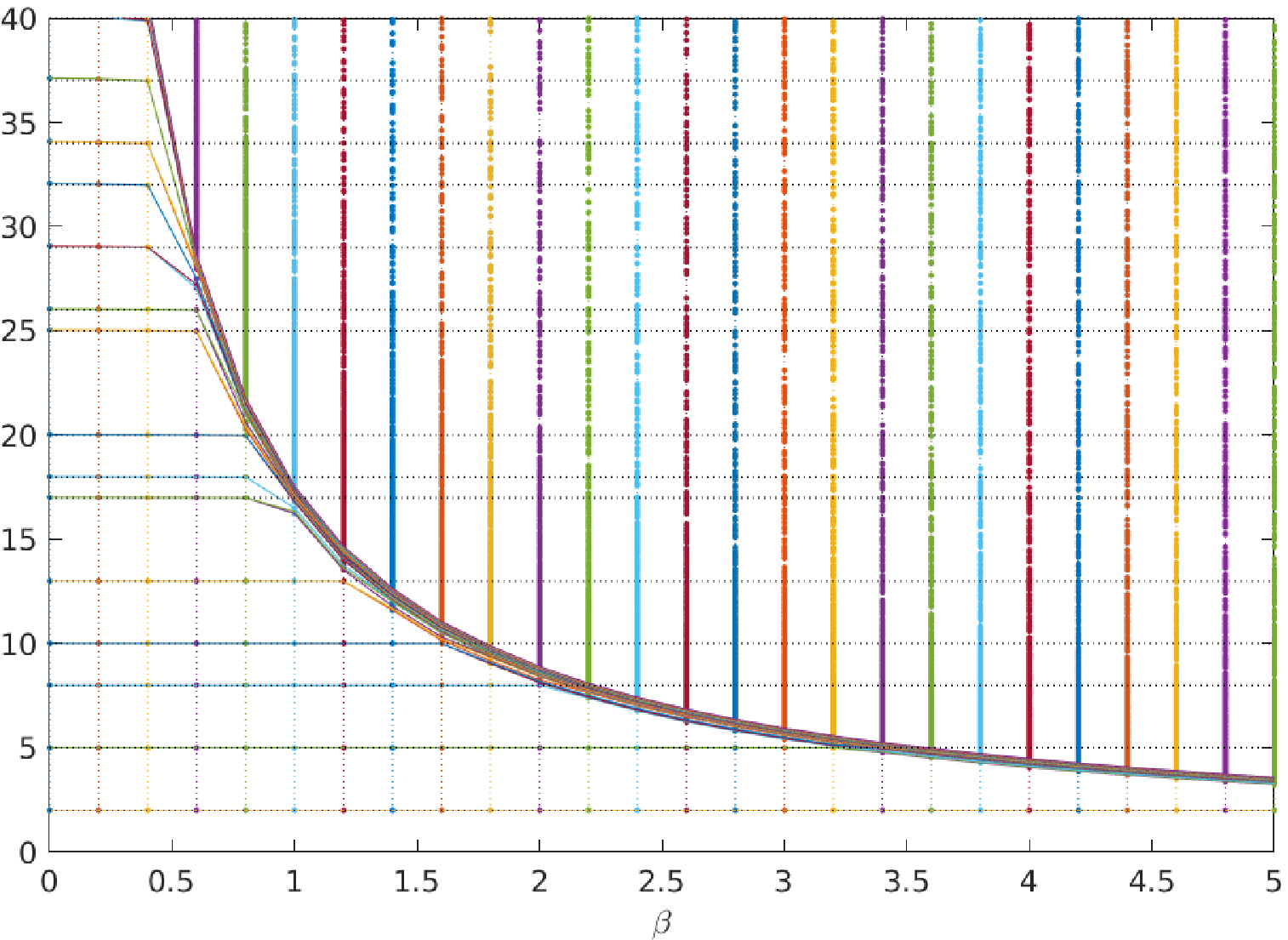}}
\subfigure[\tiny{$k=2$, $\alpha=10$, $\beta=[0,5]$}]
{\includegraphics[width=.32\textwidth]{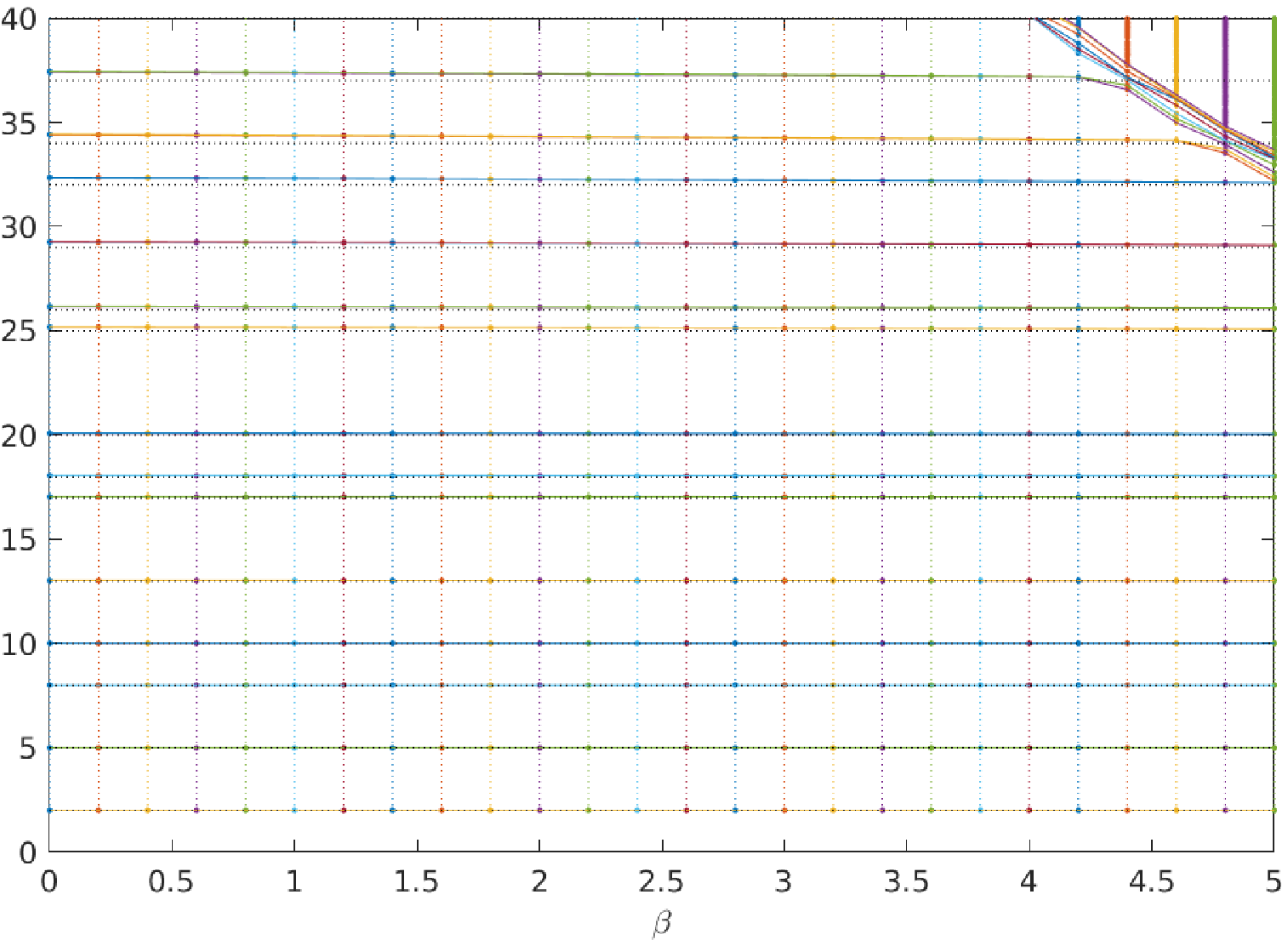}}

\subfigure[\tiny{$k=3$, $\alpha=0.1$, $\beta=[0,5]$}]
{\includegraphics[width=.32\textwidth]{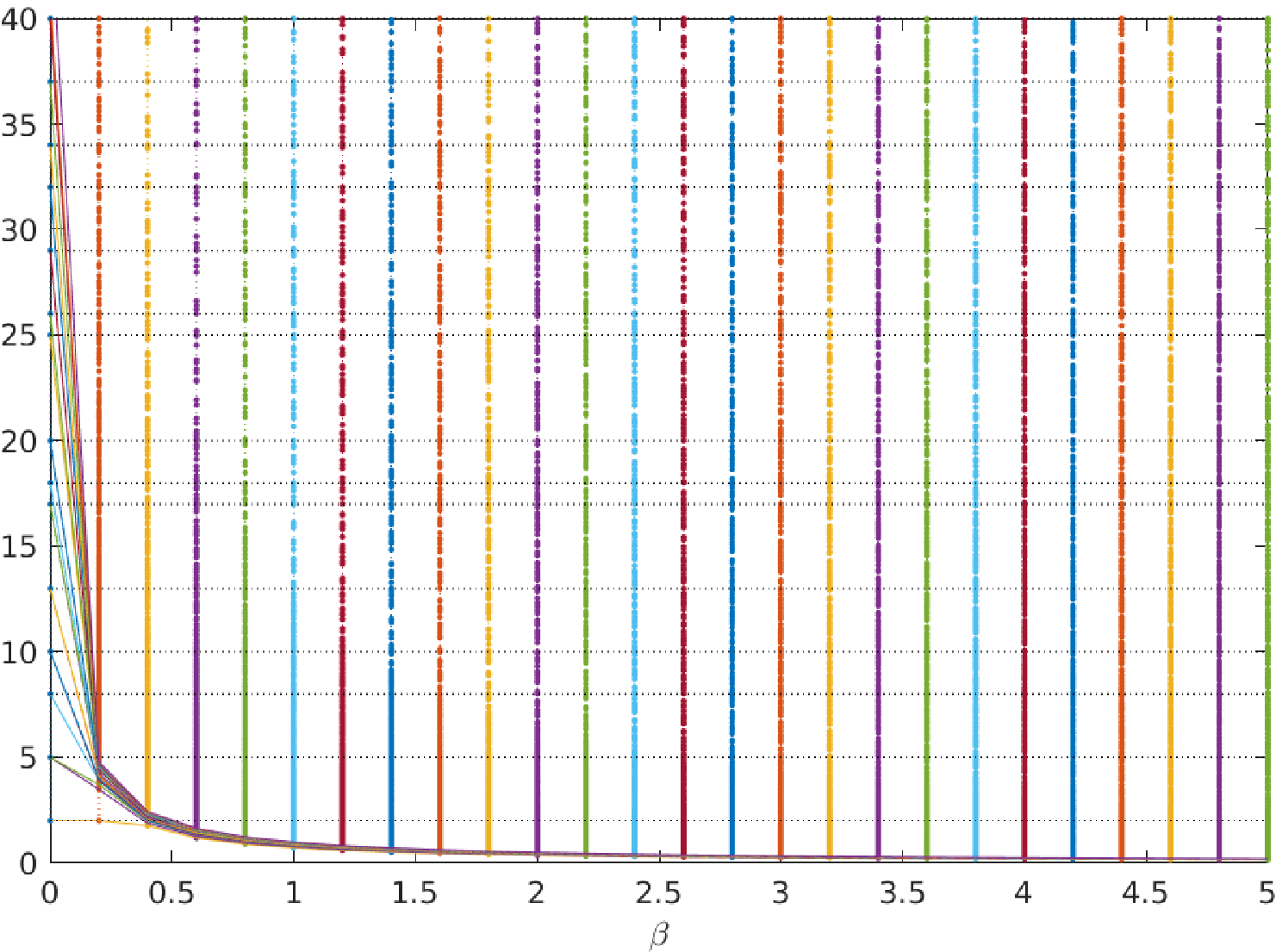}}
\subfigure[\tiny{$k=3$, $\alpha=1$, $\beta=[0,5]$}]
{\includegraphics[width=.32\textwidth]{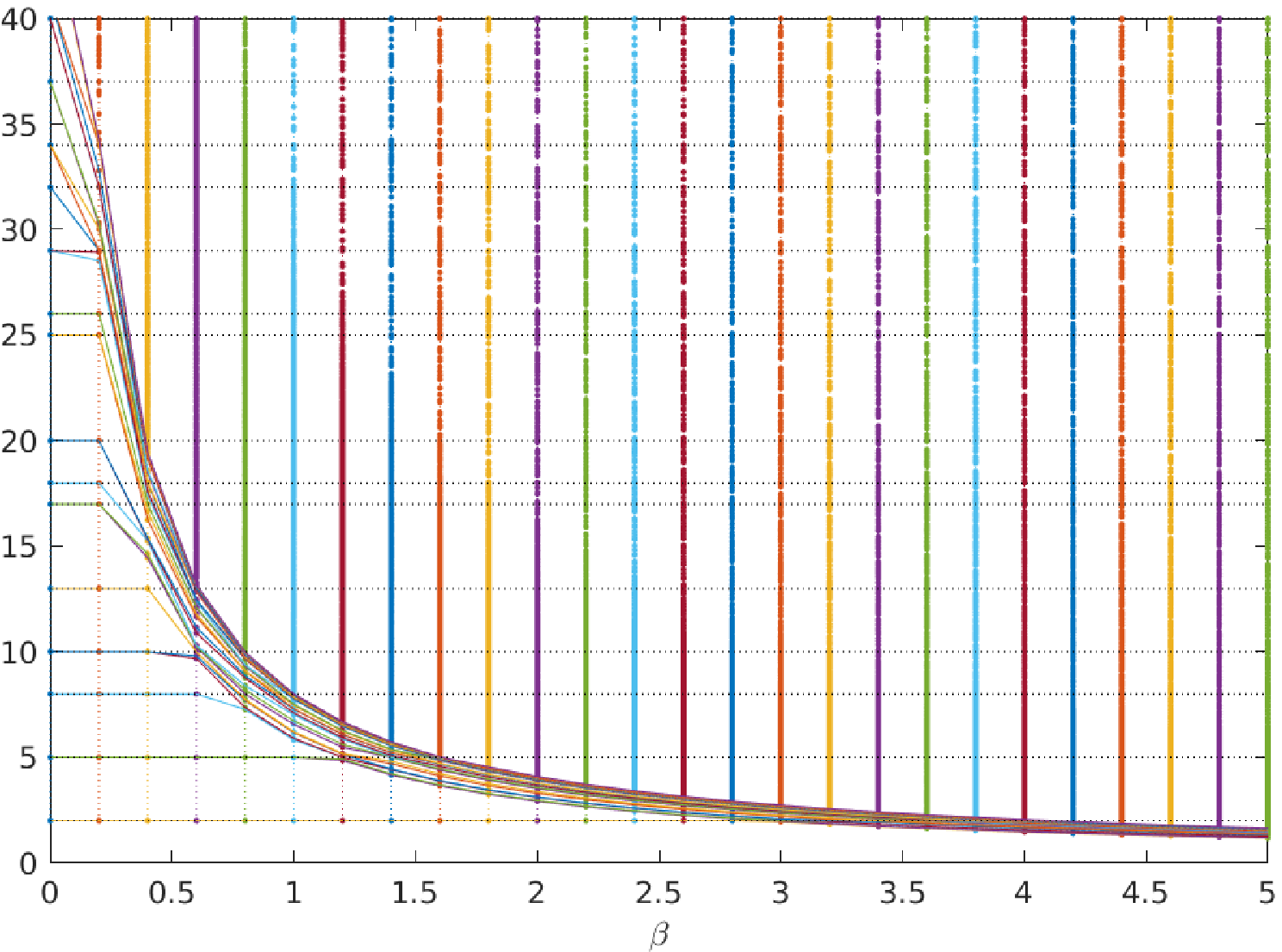}}
\subfigure[\tiny{$k=3$, $\alpha=10$, $\beta=[0,5]$}]
{\includegraphics[width=.32\textwidth]{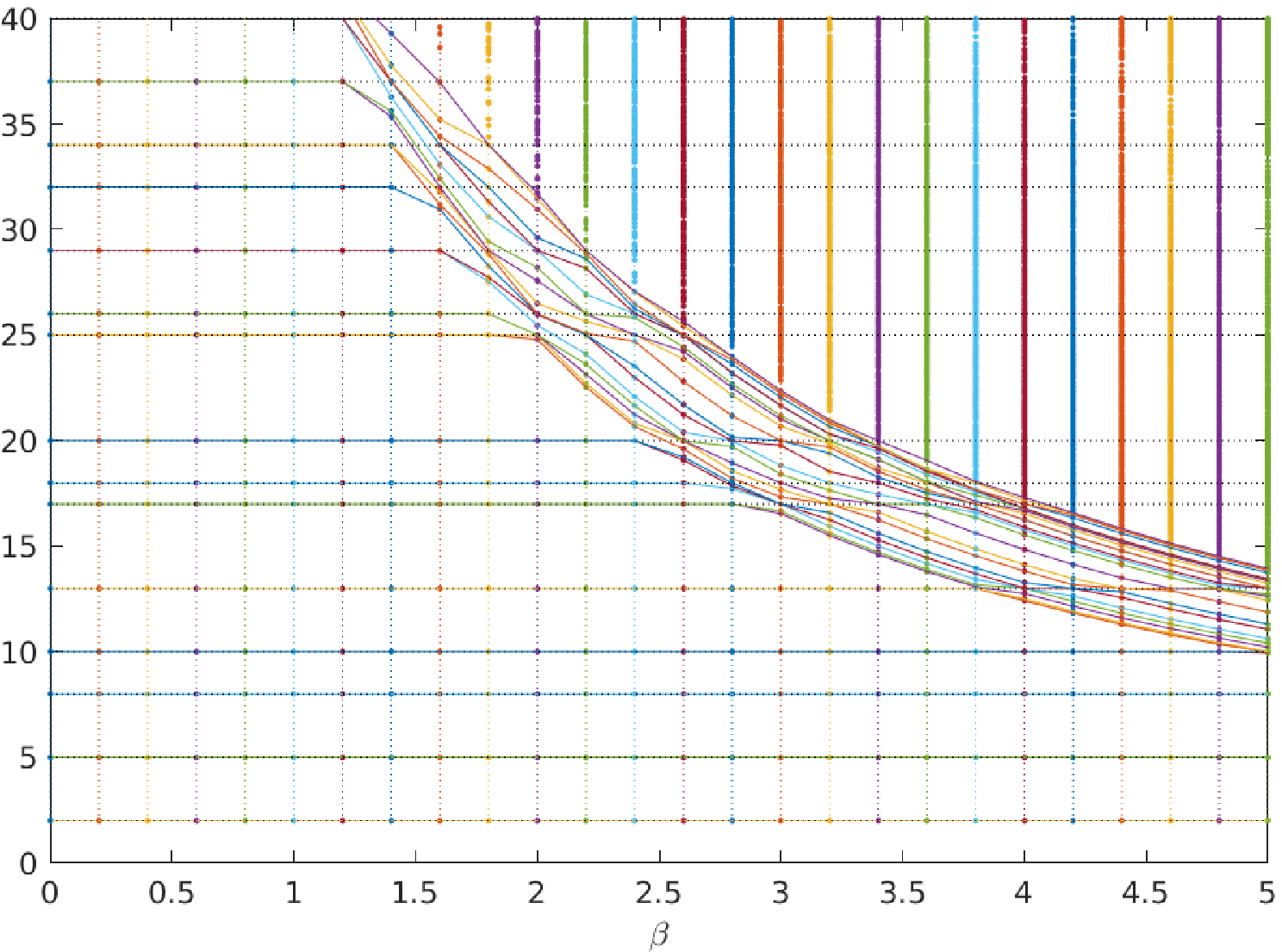}}
\end{center}

\caption{Eigenvalues with fixed $k$ and $\alpha>0$ and varying
$\beta\in[0,5]$}
\label{fg:beta}
\end{figure}
Figure~\ref{fg:beta} shows the behavior of the eigenvalues for different
values of $\alpha$ and $k$ and with $\beta$ varying in the interval $[0,5]$;
the analogies with the simplified case shown in Figure~\ref{fg:case2} are
evident.

In conclusion, the reader might wonder what is our suggestion for the choice
of the parameters. The answer is not immediate and is not definitive. One
clear message is that the dependence on $\beta$ may have more dangerous
effects than the one on $\alpha$. As a rule of thumb, our numerical tests
indicate that $\alpha$ should be chosen large enough, possibly using the same
criteria adopted for the source problem, while small values of $\beta$ (or
even $\beta=0$) are preferable. Clearly, a small value of $\beta$ leads to an
ill-conditioned problem: the limit case $\beta=0$ may corresponds to a
degenerate eigenvalue problem where the matrix $\m{M}$ is singular. As it is
typical for eigenvalue problems, if $h$ is not small enough the eigenvalues
are not yet well resolved and the dependence on the parameters might be more
complicated to analyze and may lead to useless results.
In order to better illustrate how large values of $\beta$ can produce wrong
results, in Figure~\ref{fg:start} we show the computation of the first four
eigenvalues with $\alpha=10$, $k=1$, and $\beta\in[0,400]$.
\begin{figure}
\begin{center}
\includegraphics[width=\textwidth]{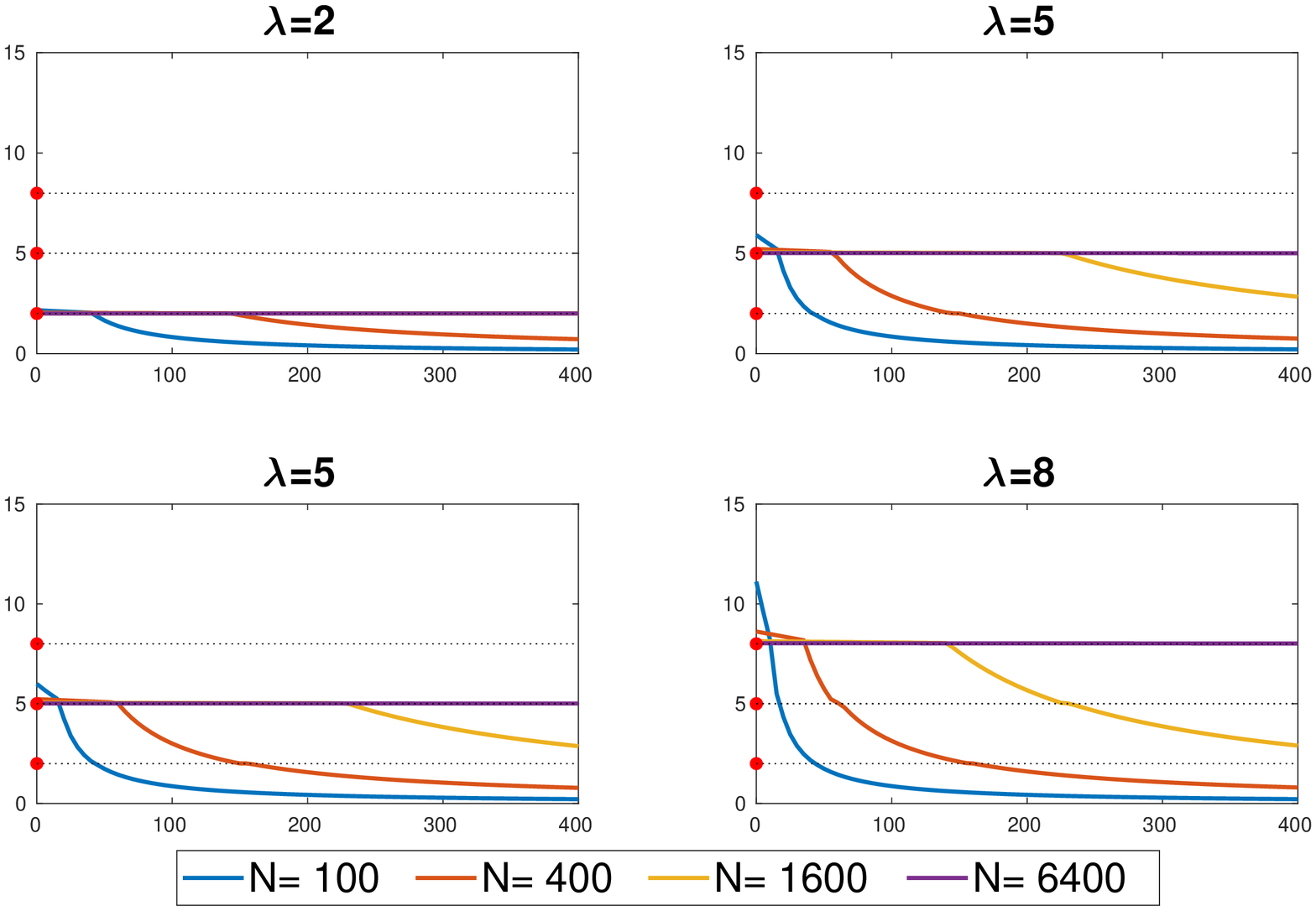}
\end{center}
\caption{First four eigenvalues depending on $\beta$ for $\alpha=10$ and
$k=1$}
\label{fg:start}
\end{figure}
Each subplot represents one of the first four eigenvalues as function of
$\beta$; different colors correspond to different meshes. These plots should
be compared to Figure~\ref{fg:beta}. In particular, Figure~\ref{fg:beta}(c)
shows computations for the same values of $\alpha$ and $k$ on the mesh
$N=200$. We comment in detail the behavior of the approximation to the
fourth eigenvalue shown in the bottom/right subplot of Figure~\ref{fg:start};
similar considerations apply to the other eigenvalues. First of all, it is
clear that if $h$ is small enough then the solution is correct: when $N=6400$
the fourth computed eigenvalue is independent on $\beta$ and provides a good
approximation of $\lambda=8$. On the coarsest mesh, however, the results
obtained are useless in order to predict the correct value of the fourth
eigenvalue: for large values of $\beta$ the computed eigenvalue is much
smaller than $8$ and tends to zero as $\beta$ increases; for a value of
$\beta$ between $10$ and $20$ the computed eigenvalue crosses the line
$\lambda=8$ and it becomes much larger for smaller values of $\beta$ reaching
a value of approximately $12$ for $\beta=0$. The results for the next mesh for
$N=400$ are similar for large $\beta$ but are significantly different after
the red line crosses the value $\lambda=8$. For values of $\beta$ smaller than
the crossing point, the computed eigenvalue remains closer to the correct
value of the exact solution. Another interesting phenomenon related to this
curve is that when it reaches $\lambda=2$ and, more evidently, $\lambda=5$,
the curve seems to bend and to be attracted for a while by the first and
second (double) eigenvalue. This behavior is due to the fact that the curve is
crossing other computed eigenvalues which are not plotted in this figure and,
after the crossing, it continues along another hyperbola.

\section{Applications}
\label{se:appl}

We conclude this paper with a discussion on the application of VEM to
eigenvalue problems arising from various models and formulations. We start
with a presentation of the mixed formulation for our model problem and then we
continue with other applications following the chronological order of the
related publications.

\subsection{The mixed Laplace eigenvalue problem}

In~\cite{mixed} the mixed formulation of the Laplace eigenvalue is considered.
The following usual mixed formulation is consider: $\Sigma=\Hdiv$,
$U=L^2(\Omega)$, so that we are seeking for $\lambda\in\RE$ and a non
vanishing $u\in U$ such that for $\bfsigma\in\Hdiv$ it holds
\[
\aligned
&(\bfsigma,\bftau)+(\div\bftau,u)=0&&\forall\bftau\in\Sigma\\
&(\div\bfsigma,v)=-\lambda(u,v)&&\forall v\in U.
\endaligned
\]

It is well known that the analysis of the approximation of the Laplace
eigenvalue in mixed form requires particular care because of the lack of
compactness of the solution operator with respect to the component $\bfsigma$.
The analysis of~\cite{mixed} extends the theory of~\cite{bbg2} to the virtual
element method.
The discrete spaces are constructed as follows: the space $\Sigma$ is
approximated by the following space described in~\cite{BFM}
\[
\aligned
\Sigma_h&=\{\bftau_h\in\Sigma:(\bftau_h\cdot\bfn)|_e\in\mathbb{P}_k(e)\
\forall e\in\edges,\ \div\bftau_h|_P\in\mathbb{P}_{k-1}(P),\\
&\qquad\rot\bftau_h|_P\in\mathbb{P}_{k-1}(P)\ \forall P\in\T\}
\endaligned
\]
while the space $U$ is approximated by a standard piecewise discontinuous
space
\[
U_h=\{v_h\in U:v_h|_P\in\mathbb{P}_{k-1}(P)\ \forall P\in\T\}.
\]

The matrix form of the approximate problem reads
\[
\left(
\begin{matrix}
\m{A}&\m{B}^\top\\
\m{B}&\m{0}
\end{matrix}
\right)
\left(
\begin{matrix}
\m{\sigma}\\
\m{u}
\end{matrix}
\right)
=-\lambda
\left(
\begin{matrix}
\m{0}&\m{0}\\
\m{0}&\m{M}
\end{matrix}
\right)
\left(
\begin{matrix}
\m{\sigma}\\
\m{u}
\end{matrix}
\right).
\]
In this case, the matrix $\m{M}$ does not contain any stabilization parameter
since the space $U_h$ is a standard mass matrix arising from discontinuous
finite elements. The same applies to the matrix $\m{B}$ that can be
constructed directly by using the degrees of freedom of the spaces $\Sigma_h$
and $U_h$.
Hence, the only matrix that requires a stabilization is $\m{A}$.

The analysis of~\cite{mixed} shows, using the tools of~\cite{bbg2} and the
properties of the spaces $\Sigma_h$ and $U_h$, the following uniform bound for
the solution of the source problem with right hand side $f\in L^2(\Omega)$:
\[
\|\bfsigma-\bfsigma_h\|_0+\|u-u_h\|_0\le Ch^t\|f\|_0
\]
where $t=\min(k,r)$, $r$ being such that $u\in H^{1+r}(\Omega)$
(see~\eqref{eq:regularity}). From this estimate the convergence of the
eigenvalues and eigenfunctions follows by standard arguments and an appropriate
definition of the solution operator (see Section~\ref{se:setting}
and~\cite{acta}).

\subsection{The Steklov eigenvalue problem}

Another formulation where the stabilization parameter enters only the matrix
on the left hand side of the discrete system, is given by the so called
Steklov eigenvalue problem
\[
\aligned
&\Delta u=0&&\text{in }\Omega\\
&\frac{\partial u}{\partial\bfn}=\lambda u&&\text{on }\partial\Omega.
\endaligned
\]
A more general setting could be actually considered by taking the second
equation only on a subset $\Gamma_0$ of $\partial\Omega$ and homogeneous
Neumann boundary conditions on the remaining part of the boundary.
We refer to~\cite{Steklov} for the analysis of its VEM discretization.

The variational formulation is obtained by considering $V=H^1(\Omega)$ and
seeks for $\lambda\in\RE$ and a non vanishing $u\in V$ such that
\[
\int_\Omega\nabla u\cdot\nabla v\,dx=
\lambda\int_{\partial\Omega}uv\,ds\qquad\forall v\in V.
\]
Since the bilinear form on the left hand side is not $\Hu$-elliptic, the
analysis is based on the use of a shift argument by adding the boundary
integral on both sides of the eigenvalue problem in weak form. Hence, the
resolvent operator and its discrete counterpart are
defined using the shifted problem. In this case, it is necessary to consider
$T:V\to V$ in order to give sense to the datum of the source problem.   

The approximation of the problem makes use of the standard VEM spaces
described in Section~\ref{se:theory}. As we have seen before, the discrete
problem requires to introduce a discretization of the bilinear form on the left
hand side, with the addition of a stabilization term. Whereas, the integral on
the right hand side can be computed using the degrees of freedom on the element
boundaries. 

For $f\in\Hu$, the solution to the associated source problem belongs
to $H^{1+r}(\Omega)$, with $r>\frac12$. This implies that the resolvent
operator is compact on $V$, so that the analysis in~\cite{Steklov} relies on
the standard theory for compact operators that we have recalled in
Section~\ref{se:setting}. In particular, it is based on the following uniform
convergence of $T_h$ to $T$:
\[
\|(T-T_h)f\|_1\le C h^r\|f\|_1\quad\forall f\in\Hu,
\]
which in turn gives the same rate of convergence for the gap between
eigenspaces and double rate of convergence for the eigenvalue error.

The matrix form of the discrete problem is
\[
\m{A}\m{u}=\lambda\m{M}\m{u},
\]
where $\m{A}$ depends on the stabilization parameter $\alpha_P$.
In~\cite{Steklov} one can find an interesting test case which models the
sloshing modes of a two dimensional fluid contained in a square. 
In this test case, $\Omega$ is the unit square and its boundary is
divided into two parts: $\Gamma_0$ the top side of the square
and $\Gamma_1$ the remaining three sides. On $\Gamma_0$ the Steklov condition
is imposed, while on $\Gamma_1$ homogeneous Neumann condition is enforced.
The convergence analysis with fixed parameter $\alpha_P=1$ for all $P\in\T$
confirms the theoretical results.
The paper contains, for the first time, the study of the effect of the
stability on the computed eigenvalues on a fixed mesh. It has been observed
that, when $\alpha_P$ is independent of the element $P$ and varies from
$4^{-3}$ to $4^3$ spurious eigenvalues appear among the correct ones. 
In Figure~\ref{fg:steklov}, we plot the first 16 eigenvalues reported
in~\cite[Table~3]{Steklov}. These eigenvalues were been obtained using a
triangular mesh, considering the middle point of each edge as a new vertex of
the polygon. Each side of the square has been divided into 8 parts and virtual
elements with $k=1$ are applied. The plot on the right is a zoom of the one
on the left.  The dotted black horiziontal lines represent the first 3 exact
eigenvalues.  One can see that for $\alpha_P>1$ the spurious eigenvalues seems
to be proportional to the stabilization parameter and this appears to be in
agreement with the analysis reported in Section~\ref{se:parameter}, while the
approximation of the three correct ones is independent of $\alpha_P$. 

\begin{figure}
\begin{center}
\includegraphics[width=.49\textwidth]{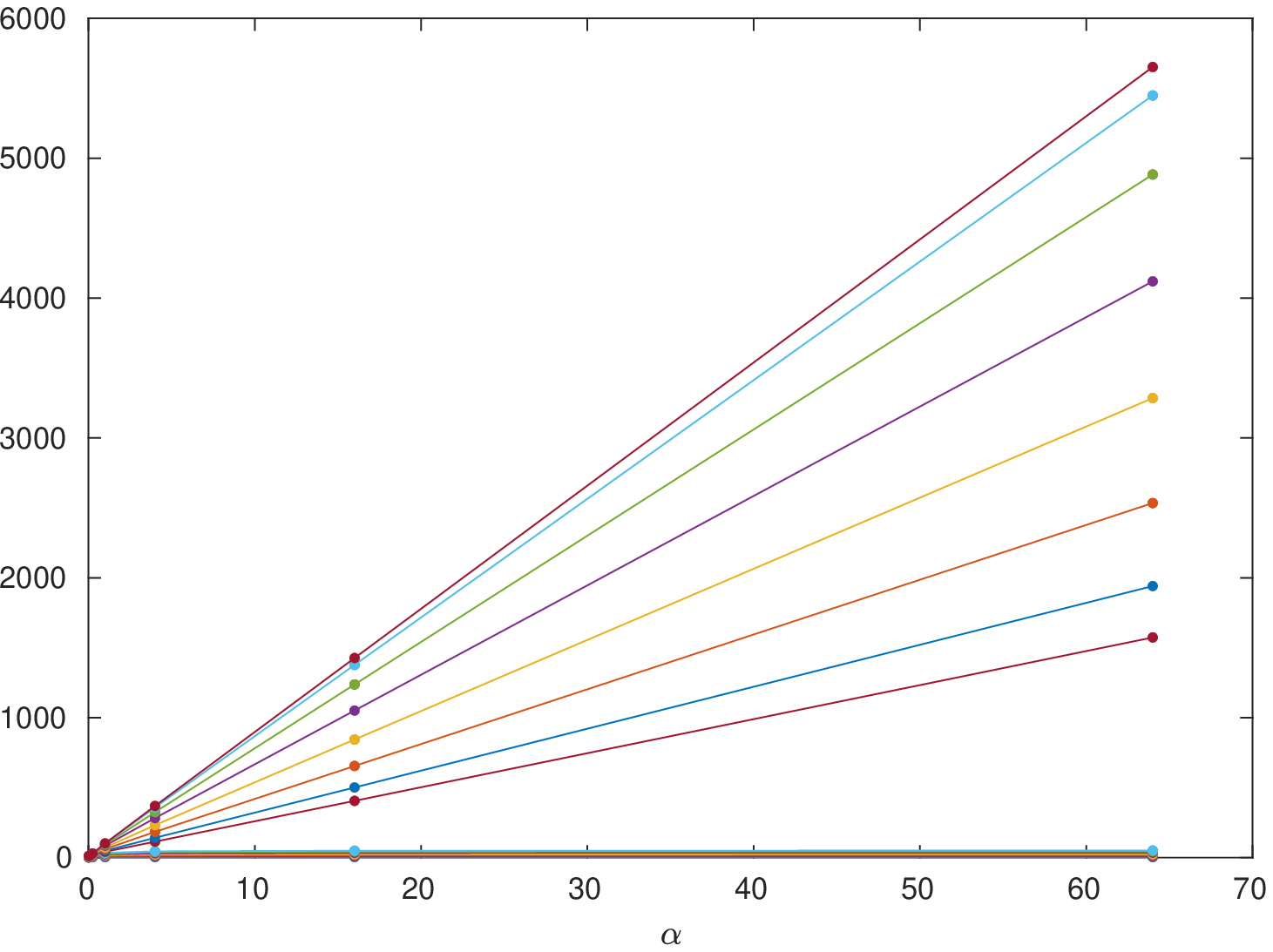}
\includegraphics[width=.49\textwidth]{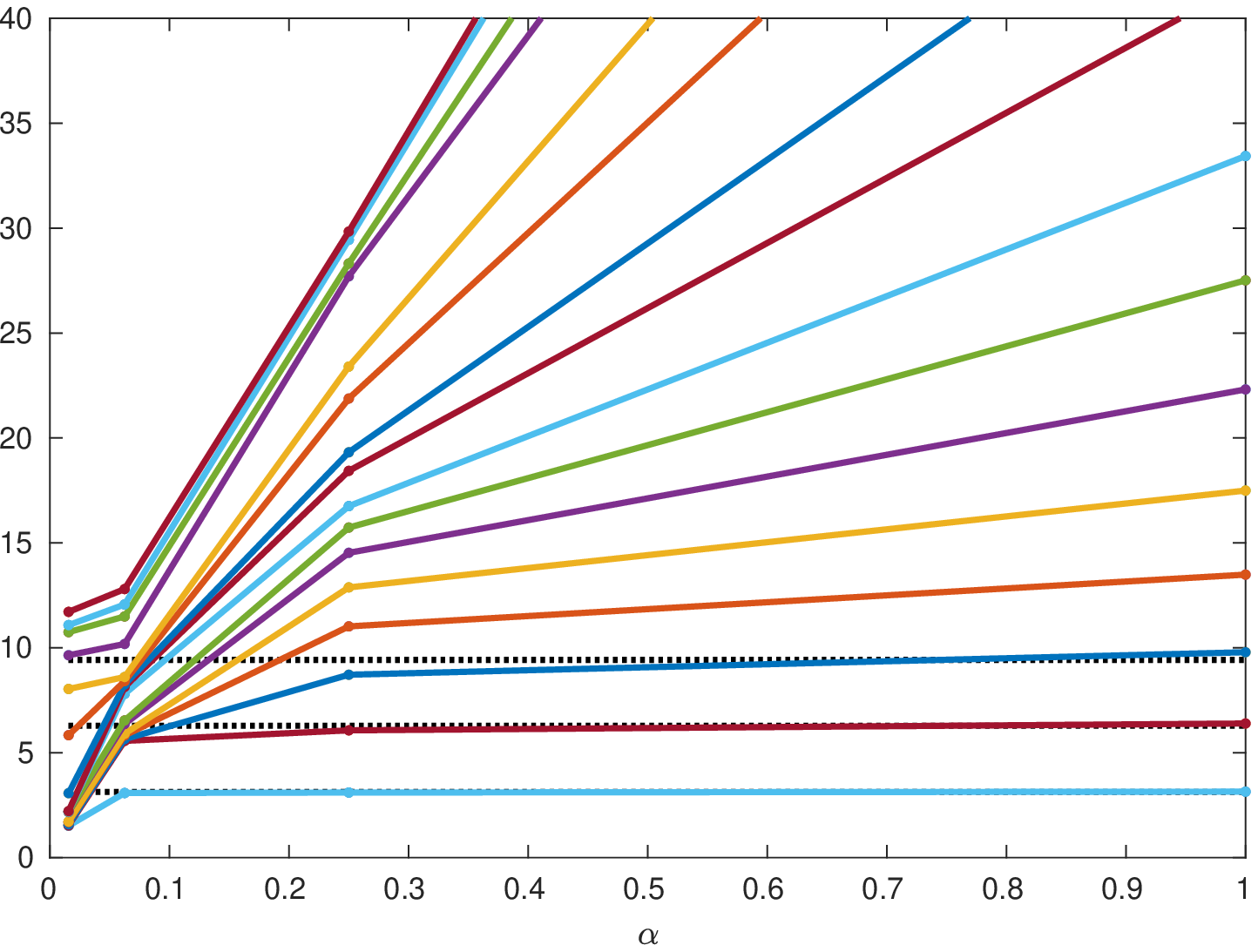}
\end{center}
\caption{First 16 eigenvalues for $4^{-3}\le\alpha_P\le4^3$}
\label{fg:steklov}
\end{figure}
\subsection{An acoustic vibration problem}

The next application has been considered in~\cite{acoustic} and is an example
where only the matrix on the right hand side of the discrete algebraic system
contains a parameter.

The model describes the free vibrations of an inviscid compressible fluid
within a bounded rigid cavity $\Omega$. The simplest model consists in finding
eigenvalues $\lambda\in\RE$ and non vanishing eigenfunctions $\bfu\in\Hodiv$
such that
\[
\int_\Omega\div\bfu\,\div\bfv\,dx=\lambda\int_\Omega\bfu\cdot\bfv\,dx
\qquad\forall\bfv\in\Hodiv.
\]
Here $\Hodiv$ is the subspace of $\Hdiv$ of functions with vanishing trace of
the normal component on $\partial\Omega$.

The approximation is performed by the following VEM space that has been
designed taking inspiration from~\cite{BBMR,BFM}:
\[
\aligned
V_h&=\{\bfv_h\in\Hodiv:(\bfv_h\cdot\bfn)|_e\in\mathbb{P}_k(e)\ \forall
e\in\edges,\ \div\bfv_h|_P\in\mathbb{P}_k(P),\\
&\qquad\rot\bfv_h|_P=0\ \forall
P\in\T\}.
\endaligned
\]

The discretized problem has the classical form
\[
\m{A}\m{u}=\lambda\m{M}\m{u}
\]
and the matrix $\m{A}$ can be formed directly using the degrees of freedom
since the divergence of elements in $V_h$ are piecewise polynomials. On the
other hand, the mass matrix $\m{M}$ should be constructed by using the typical
VEM approach and requires a stabilization in order to control the non
polynomial part of the space $V_h$.

The convergence analysis of~\cite{acoustic} is performed as in case of
standard finite elements~\cite{BDMRS} by shifting the solution operator in
order to deal with the kernel of the divergence operator, and by exploiting
the theory of~\cite{DNR} related to the analysis of non compact operators.
More precisely, the continuous solution operator $T:\Hodiv\to\Hodiv$ is defined
as the solution $T\bff\in\Hodiv$ of
\[
\int_\Omega\div T\bff\div\bfv\,dx+\int_\Omega T\bff\cdot\bfv\,dx=
\int_\Omega\bff\cdot\bfv\,dx\qquad\forall\bfv\in\Hodiv
\]
and its discrete counterpart $T_h:V_h\to V_h$ is defined as the solution
$T_h\bff_h\in V_h$ of
\[
\int_\Omega\div T_h\bff_h\div\bfv_h\,dx+\int_\Omega T_h\bff_h\cdot\bfv_h\,dx=
\int_\Omega\bff_h\cdot\bfv_h\,dx\qquad\forall\bfv_h\in V_h.
\]

The theory of~\cite{DNR} consists in showing the following mesh dependent
uniform convergence
\[
\lim_{h\to0}\sup_{\substack{\bff_h\in V_h\\\|\bff_h\|_{\div}=1}}
\|(T-T_h)\bff_h\|_{\div}=0
\]
together with the density of $V_h$ in $\Hodiv$. 
In particular, in~\cite{acoustic} it is proved the following estimate
\[
\sup_{\substack{\bff_h\in V_h\\\|\bff_h\|_{\div}=1}}
\|(T-T_h)\bff_h\|_{\div}\le C h^s,
\]
where $s\in]\frac12,1]$ is the regularity index of the solution to the
associated source problem. This results implies that for $h$ small enough
the virtual element method does not introduce spurious eigenvalues
interspersed among the physical ones we are interested in. Optimal error
estimates depending on the regularity of the eigenfunctions follow by
combining the technique of~\cite{DNR2} with the first Strang lemma.

The above theoretical results were obtained when the stability parameter
$\beta_P$ is positive. The numerical experiments show that for $k=0$ the
optimal second order rate of convergence for the eigenvalues can be achieved
for several choices of meshes. The effect of the stability
constant has also been investigated. In Figure~\ref{fg:acoustic}, we plot the 
first eigenvalue for $\beta_P$ ranging from $2^{-6}$ to $2^6$ for different
square meshes as reported in~\cite[Table~4]{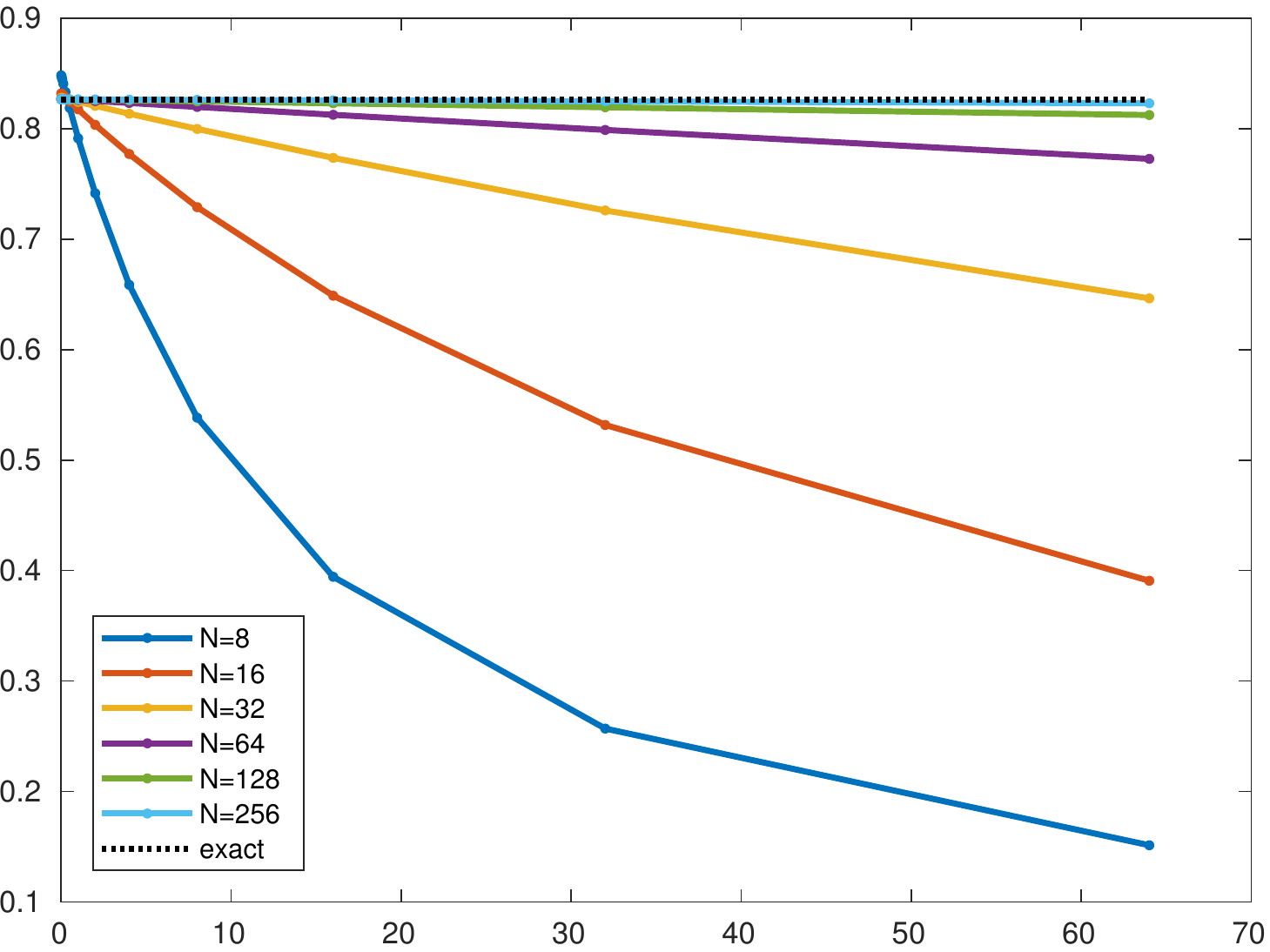}.
It can be seen that as the mesh gets finer the results are less sensible to the
value of $\beta_P$, while for coarse meshes the behavior looks similar to that
shown in Figure~\ref{fg:start} corresponding to Case~$2$ in
Section~\ref{se:parameter}. Moreover, although the choice $\beta_P=0$ is not
covered by the theory, the computational results appear to be very accurate in
this case, and also this is in agreement with the observations reported in
Section~\ref{se:parameter}.
\begin{figure}
\begin{center}
\includegraphics[width=.9\textwidth]{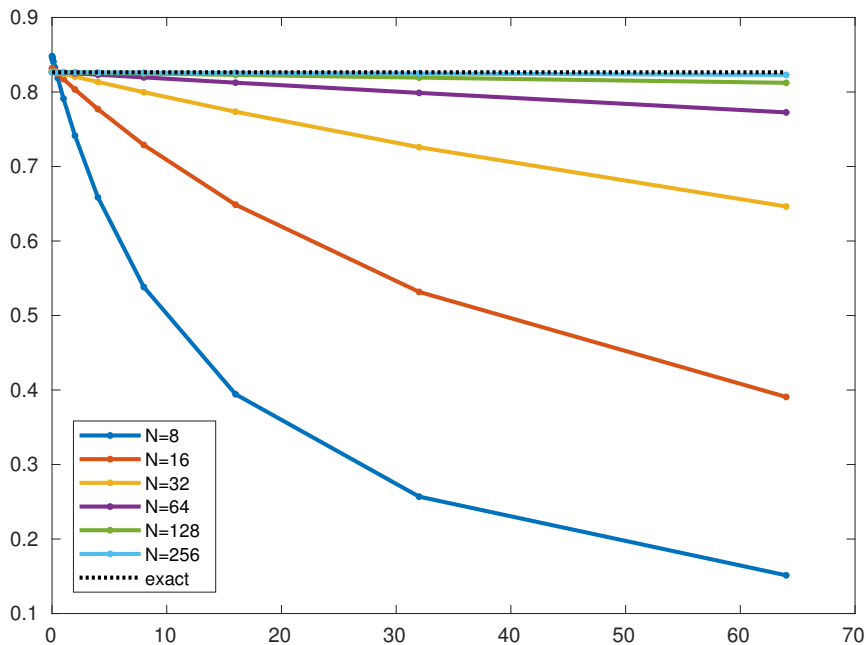}
\end{center}
\caption{First eigenvalue with $2^{-6}\le\beta\le2^6$ and different meshes}
\label{fg:acoustic}
\end{figure}
\subsection{Eigenvalue problems related to plate models}
\label{se:plates}
The computation of vibration frequencies and modes of an elastic solid is a
very important topic in engineering applications. This and the following
sections are devoted to report on the results obtained
in~\cite{Kirchhoff,buckling,MRelasticity} concerning the use of VEM to
approximate the eigenvalues of the Kirchhoff--Love model for plates and of
linear elasticty equations. 

We consider a plate whose mean surface, in its reference configuration,
occupies a polygonal bounded domain $\Omega\subset\RE^2$. The plate is clamped
on its whole boundary. Let $u$ denote the transverse displacement and $\lambda$
the vibration frequency, then the plate vibration problem, modeled by
Kirchhoff--Love equations reads: find $\lambda\in\RE$ and a non vanishing
$u$ such that:
\[
\aligned
&\Delta^2 u=\lambda u &&\quad\text{in }\Omega\\
&u=\frac{\partial u}{\partial\bfn}=0&&\quad\text{on }\partial\Omega.
\endaligned
\]
The corresponding weak form reads: find $(\lambda,u)\in\RE\times\Hd$ with
$u\ne0$ such that
\begin{equation}
\label{eq:plate}
\int_\Omega D^2u:D^2v\,dx=\lambda\int_\Omega uv\,dx\qquad\forall v\in\Hd,
\end{equation}
where $D^2 v$ denotes the Hessian matrix of $v$. The associated resolvent
operator $T$ is defined for all $f\in\Hd$ as the solution $Tf$ of the
corresponding source problem. It results to be self adjoint and compact thanks
to the fact that $Tf$ belongs to $H^{2+s}(\Omega)$ for some $s\in]\frac12,1]$
being the Sobolev regularity for the biharmonic equation with homogeneous
Dirichlet boundary conditions.

We recall here the construction of the virtual elements space proposed
in~\cite{Kirchhoff} which differs from those presented in the previous sections
since $C^1$-regularity is required for approximating functions in $\Hd$.
Given a sequence $\{\T\}_h$ of decompositions of $\Omega$ into polygons $P$,
we introduce first the following finite dimensional space:
\[
\aligned
V_h(P)&=\Big\{v_h\in H^2(P): \Delta^2v_h\in\mathbb{P}_2(P),
\ v_h|_{\partial P}\in C^0(\partial P),\\
&\quad\quad 
\nabla v_h|_{\partial P}\in C^0(\partial P)^2,\ 
v_h|_e\in\mathbb{P}_3(e),
\ \frac{\partial v_h}{\partial\bfn}\Big|_e\in\mathbb{P}_1(e)
\ \forall e\in\partial P\Big\}.
\endaligned
\]
The corresponding degrees of freedom are the values of $v_h$ and $\nabla v_h$ at
the vertices of $P$. Using these degrees of freedom it is possible to define a
projection operator $\Pidelta:V_h(P)\to\mathbb{P}_2(P)\subseteq V_h(P)$ by
solving for each $v\in V_h(P)$ the following problem:
\[
\aligned
&\adelta(\Pidelta v,q)=\adelta(v,q)&&\quad\forall q\in\mathbb{P}_2(P)\\
&((\Pidelta v, q))_P=((v,q))_P&&\quad\forall q\in\mathbb{P}_1(P),
\endaligned
\]
where 
\[
\adelta(u,v)=\int_P D^2 u:D^2 v\,dx,
\quad ((v,q))_P=\sum_{i=1}^{N_P}u(V_i)v(V_i)\qquad
\forall u,v\in H^2(P),
\] 
and $V_i$ $i=1,\dots,N_P$ are the vertices of $P$.

Then the local virtual space is:
\[
W_h(P)=\left\{v_h\in V_h(P): \int_P (\Pidelta v_h-v_k)q\,dx=0\ 
\forall q\in\mathbb{P}_2(P)\right\}.
\]
Thanks to this characterization of $W_h(P)$, the $L^2(P)$-projection operator
onto $\mathbb{P}_2(P)$ coincides with $\Pidelta$.

With the above definitions, the global virtual space is defined as:
\[
W_h=\{v_h\in\Hd: v_h|_P\in W_h(P)\}
\]
and the discrete bilinear forms are given for all $u_h,v_h\in W_h$ by  
\[
a_h(u_h,v_h)=\sum_{P\in\T} \adh(u_h,v_h),\quad 
b_h(u_h,v_h)=\sum_{P\in\T} \bdh(u_h,v_h),
\]
with
\[
\aligned
&\adh(u_h,v_h)=\adelta(\Pidelta u_h,\Pidelta v_h)
+s_{\Delta,a}^P(u_h-\Pidelta u_h,v_h-\Pidelta v_h)\\
&\bdh(u_h,v_h)=\int_P\Pidelta u_h\Pidelta v_h\,dx
+s_{\Delta,b}^P(u_h-\Pidelta u_h,v_h-\Pidelta v_h).
\endaligned
\]
As usual the stabilization terms $s_{\Delta,a}^P$ and $s_{\Delta,b}^P$ have been
added in order to guarantee consistency and stability.

With the above definitions, the discrete counterpart of
equation~\eqref{eq:plate} reads: find $(\lambda_h,u_h)\in\RE\times W_h$ with
$u\ne0$ such that
\[
a_h(u_h,v_h)=\lambda_hb_h(u_h,v_h)\quad\forall v_h\in W_h.
\]
After introducing the resolvent operator $T_h:W_h\to W_h$ as the mapping that
associates to any $f_h\in W_h$ the solution $T_hf_h$ to the corresponding source
problem, the convergence analysis follows from the theory developed
in~\cite{DNR,DNR2} and gives optimal rate of convergence for the gap between
continuous and discrete eigenspaces according to the regularity of the
eigenfunctions and double rate of convergence for the eigenvalue error. 

The numerical results require that the choice for the stabilization terms is
made precise, therefore using the degrees of freedom of the local space one can
set
\[
\aligned
&s_{\Delta,a}^P(u_h,v_h)=\alpha_P\sum_{i=1}^{N_P}
(u_h(V_i)v_h(V_i)+h_{V_i}^2\nabla u_h(V_i)\cdot\nabla v_h(V_i))\\
&s_{\Delta,b}^P(u_h,v_h)=\beta_P\sum_{i=1}^{N_P}
(u_h(V_i)v_h(V_i)+h_{V_i}^2\nabla u_h(V_i)\cdot\nabla v_h(V_i))
\endaligned
\]
for all $u_h,v_h\in W_h(P)$. In the numerical experiments presented
in~\cite{Kirchhoff}, the stabilization parameters $\alpha_P$ and $\beta_P$ are
chosen as the mean value of the eigenvalues of the local matrices
$\adelta(\Pidelta u_h,\Pidelta v_h)$ and $\int_P\Pidelta u_h \Pidelta v_h\,dx$,
respectively. Hence, the matrices $\m{A}$ and $\m{M}$ of the associated
algebraic eigenvalue problem $\m{A}\m{u}=\lambda\m{M}\m{u}$ depend on
stabilization parameters and we shall discuss later on their choice.

The numerical results reported in~\cite{Kirchhoff} show that the first four
eigenvalues converge to the exact or extrapolated values with quadratic rate in
agreement with the theoretical results. 

In order to analyze the effect of the parameters on the computed eigenvalues,
in~\cite{Kirchhoff} it has been done the choice to multiply the stabilizing forms
$s_{\Delta,a}^P$ and $s_{\Delta,b}^P$ by the same value of the parameter. 
This makes very difficult to compare the results with the behavior described in
Section~\ref{se:parameter}, since the better choice would be to take large
values of $\alpha_P$ and small ones of $\beta_P$. Coherently, the conclusions
of~\cite{Kirchhoff} is that
the optimal choice is for value of the parameter in the middle. 
 
We end this subsection with some consideration relative to the buckling
problem of a clamped plate, which can be formulated as a spectral problem of
fourth order as follows: given a plane stress tensor field
$\bfeta:\Omega\to\RE^{2\times2}$, find the non vanishing deflection of the
plate $u$ and the eigenvalue (the buckling coefficient) $\lambda$ such that
\[
\aligned
&\Delta^2u=-\lambda\div(\bfeta\nabla u)&&\quad\text{in }\Omega\\
&u=\frac{\partial u}{\partial\bfn}=0&&\quad\text{on }\partial\Omega.
\endaligned
\]
Setting $a_\Delta(u,v)=\int_\Omega D^2 u: D^2v\,dx$ and 
$b_b(u,v)=\int_\Omega \bfeta\nabla u\nabla v\,dx$ the weak problem can be written
as~\eqref{eq:plate} with $b_b(u,v)$ on the right hand side instead of the
$L^2(\Omega)$-scalar product. 

The virtual element discretization of this problem has been discussed
in~\cite{buckling} using virtual element spaces of any degree $k\ge2$
introduced in~\cite{BM}:
\[
V_h(P)=\Big\{v_h\in\widetilde{V}_h(P):\int_P(v_h-\Pi_k^\Delta v_h)q\,dx=0\ 
\forall q\in\mathbb{P}^*_{k-3}(P)\cup\mathbb{P}^*_{k-2}(P)\Big\}
\]
where $\mathbb{P}^*_{\ell}(P)$ denotes homogeneous polynomials of degree $\ell$
with the convention that $\mathbb{P}^*_{-1}(P)=\{0\}$.
In the definition above the following notation have been used: for
$r=\max(3,k)$ and $s=k-1$
\[
\aligned
\widetilde{V}_h(P)&=\Big\{v_h\in H^2(P): \Delta^2v_h\in\mathbb{P}_{k-2}(P),
\ v_h|_{\partial P}\in C^0(\partial P),\\
&\qquad\nabla v_h|_{\partial P}\in C^0(\partial P)^2,\
v_h|_e\in\mathbb{P}_r(e),
\ \frac{\partial v_h}{\partial\bfn}\Big|_e\in\mathbb{P}_s(e)
\ \forall e\in\partial P\Big\}.
\endaligned
\]
and $\Pi_k^\Delta: H^2(P)\to \mathbb{P}_k(P)\subseteq\widetilde{V}_h(P)$ is
the computable projection operator solution of local problems:
\[
\aligned
&\adelta(\Pi_k^\Delta v,q)=\adelta(v,q)\quad\forall q\in\mathbb{P}_k(P)
\quad \forall v\in H^2(P)\\
&\widehat{\Pi_k^\Delta v}=\widehat{v}, \quad 
\widehat{\nabla\Pi_k^\Delta v}=\widehat{\nabla v}
\endaligned
\]
with $\widehat{v}=\frac1{N_P}\sum_{i=1}^{N_P}v(V_i)$, and $V_i$ the
$N_P$ vertices of $P$.
Then the discrete bilinear form $a_h$ is the sum of local terms composed by
a consistency and a stabilizing part
\[
a_h^P(u_h,v_h)=\adelta(\Pi_k^\Delta u_h,\Pi_k^\Delta v_h)
+s_{\Delta,a}^P(u_h-\Pi_k^\Delta u_h,v_h-\Pi_k^\Delta v_h).
\]
On the other hand the discrete right hand side can be constructed using the
degrees of freedom without the necessity of stabilizing
\[
b_b^P(u_h,v_h)=\int_P \bfeta\Pi_{k-1}^\Delta u_h \Pi_{k-1}^\Delta v_h\,dx.
\]
The analysis of this problem follows the same lines as the one for the
vibrating plate reported above. In~\cite{buckling} the theoretical results
are confirmed by several numerical experiments showing that the
method produces accurate solutions.

Another eigenvalue problem that involves a fourth order operator and whose
approximation has been analyzed when VEM spaces are used, can be found
in~\cite{transmission} where a transmission problem is considered.

\subsection{Eigenvalue problems related to linear elasticity models}

In this section we report on the analysis presented in~\cite{MRelasticity}
about the VEM approximation of the linear elasticity eigenvalue problem in two
space dimensions.
We consider the functional space $V=H^1_{\Gamma_D}(\Omega)^2$, where
$\Gamma_D$ is the part of $\partial\Omega$ where homogeneous Dirichlet
boundary condition are imposed, that is where the displacement of the
structure is prescribed and equal to zero.
The equation we are interested in is: find $\lambda\in\RE$ and
$\bfu\in V$ different from zero such that
\[
\int_{\Omega}\mathcal{C}\symgrad(\bfu):\symgrad(\bfv)\,d\bfx=\lambda
\int_\Omega\rho\bfu\cdot\bfv\,d\bfx,
\]
where $\rho>0$ is the density of the material, $\symgrad$ is the symmetric
gradient, and the compliance tensor $\mathcal{C}$ is defined as
\[
\mathcal{C}\bftau=2\mu\bftau+\lambda\mathrm{tr}(\bftau)\underline{\mathbf{I}},
\]
$\mu$ and $\lambda$ being the Lam\'e constants.

It is well known that the problem under consideration is associated with a
compact resolvent operator, so that the analysis of its approximation relies
on the standard Babu\v ska--Osborn theory.

The VEM discretization makes use of the natural vectorial generalization
$V_h\subset V$ based on the local standard spaces~\eqref{eq:vemP} and of the
projection operator defined as in~\eqref{eq:pinabla} with the due
modifications.

The discrete problem has the standard form of a generalized eigenvalue problem
$\m{A}\m{u}=\lambda\m{M}\m{u}$, where both matrices $\m{A}$ and $\m{M}$
contain a stabilization parameter. The analysis, both a priori and a
posteriori, is performed for a given choice of the stabilizing parameters,
which is analogous to the one described in Section~\ref{se:test-conf}
As in~\cite{Kirchhoff}, see also Section~\ref{se:plates},
in~\cite{MRelasticity} a study on the dependence of the stabilizing parameters
is performed when the parameters of $\m{A}$ and $\m{M}$ are chosen equal to
each other. It turns out that when the parameter is not too large or not too
small, the first eigenvalues of the considered example are not polluted by
spurious modes. This is compatible with what we have found in
Section~\ref{se:parameter}, although a safer choice would imply to take a
larger parameter for stabilizing the matrix $\m{A}$ and a smaller one (or even
zero) for $\m{M}$.

\bibliography{ref.bib}
\bibliographystyle{plain}

\end{document}